\documentclass[12pt,reqno]{amsart}
\usepackage{hyperref}

\usepackage{amsfonts, amssymb,amsmath,amsthm,enumerate}
\usepackage{color}
\usepackage[margin=1.5in]{geometry}

\RequirePackage{mathrsfs} \let\mathcal\mathscr

\newtheorem{theorem}{Theorem}[section]
\newtheorem{lemma}[theorem]{Lemma}
\newtheorem{prop}[theorem]{Proposition}
\newtheorem{corollary}[theorem]{Corollary}

\theoremstyle{definition}
\newtheorem*{ack}{Acknowledgements}
\newtheorem{rem}[theorem]{Remark}
\newtheorem{example}[theorem]{Example}

\newtheorem*{hyp}{Hypothesis-$\rho$}

\newcommand{\sfl}{\mathsf{\Lambda}}

\renewcommand{\d}{\mathrm{d}}
\renewcommand{\phi}{\varphi}

\newcommand{\card}{\#}

\newcommand{\PP}{\mathbb{P}}
\renewcommand{\AA}{\mathbb{A}}
\newcommand{\FF}{\mathbb{F}}
\newcommand{\ZZ}{\mathbb{Z}}

\newcommand{\NN}{\mathbb{N}}
\newcommand{\QQ}{\mathbb{Q}}
\newcommand{\RR}{\mathbb{R}}
\newcommand{\CC}{\mathbb{C}}

\renewcommand{\rho}{\varrho}

\renewcommand{\leq}{\leqslant}
\renewcommand{\le}{\leqslant}
\renewcommand{\geq}{\geqslant}
\renewcommand{\ge}{\geqslant}
\renewcommand{\bar}{\overline}

\newcommand{\ma}{\mathbf}

\newcommand{\m}{\mathbf{m}}
\newcommand{\M}{\mathbf{M}}
\newcommand{\B}{\mathbf{B}}
\newcommand{\A}{\mathbf{A}}

\newcommand{\x}{\mathbf{x}}
\newcommand{\y}{\mathbf{y}}
\renewcommand{\c}{\mathbf{c}}
\newcommand{\f}{\mathbf{f}}
\renewcommand{\v}{\mathbf{v}}
\renewcommand{\u}{\mathbf{u}}

\renewcommand{\a}{\mathbf{a}}

\newcommand{\bt}{\mathbf{t}}

\newcommand{\ve}{\varepsilon}

\newcommand{\bla}{\boldsymbol{\lambda}}

\newcommand{\bxi}{\boldsymbol{\xi}}

\DeclareMathOperator{\rank}{rank}

\DeclareMathOperator{\Mod}{mod} 
\renewcommand{\bmod}[1]{\,(\Mod{#1})}

\newcommand{\Hyp}{Hypothesis-$\rho$}

\numberwithin{equation}{section}

\begin{document}
\title[Power-free values of
  polynomials on symmetric varieties]{Power-free values of 
  polynomials on \\ symmetric varieties}

\date{\today}

\author{T.D. Browning}
\author{A. Gorodnik}

\address{School of Mathematics\\
University of Bristol\\ Bristol\\ BS8 1TW\\ UK}
\email{t.d.browning@bristol.ac.uk, a.gorodnik@bristol.ac.uk}

\begin{abstract} 
Given a symmetric variety $Y$ defined over $\QQ$ and 
a non-zero polynomial with integer coefficients, 
we use  techniques from homogeneous dynamics to establish conditions under which the polynomial 
can be made $r$-free for a Zariski dense set of integral points on 
$Y$. We also establish an asymptotic counting formula for this set.
In the special case that $Y$ is a 
 quadric hypersurface, we give explicit bounds on the size of $r$ by combining the argument with 
 a uniform upper bound for the density of integral points on general affine quadrics defined over $\QQ$.
\end{abstract}

\subjclass[2010]{11N32 (11D09, 11D45, 20G30)}

\maketitle
\setcounter{tocdepth}{1}
\tableofcontents

\thispagestyle{empty}

\section{Introduction}\label{s:intro}

Given a polynomial with integer coefficients, the problem of determining whether or not 
it takes infinitely many square-free values has long been a central concern in analytic number theory. 
More generally, one can ask for  $r$-free values, for any $r\geq 2$, 
where an integer is said to be {\em $r$-free} if it is not divisible by $p^r$ for any prime $p$. 
In this paper we initiate an investigation of $r$-free values of  polynomials whose arguments run over {\em thin sets}.

Let $Y\subset \AA^n$ be an affine variety defined by a system of polynomial equations with integer coefficients,
with $Y(\ZZ)\neq \emptyset$,  and let 
$f\in \ZZ[X_1,\ldots,X_n]$  be a polynomial.  Nevo and Sarnak \cite{n-s}
 define the {\em saturation number} $r(Y,f)$ to be the least positive integer $r$ such that the set of $\x\in Y(\ZZ)$, for which $f(\x)$ has at most $r$ prime factors, is Zariski dense in $Y$.
They show that $r(Y,f)$ is  finite whenever $Y$ is a principal homogeneous space of a semisimple algebraic group and 
 $f$ is ``weakly primitive''. 
 In a similar spirit, we can define the {\em permeation number}
 $r^\square (Y,f)$ 
to be the least  integer $r\geq 2$ such that the set
\begin{equation}
\label{eq:rf}
\{\x\in Y(\ZZ): \hbox{ $f(\x)$ is $r$-free}\}
\end{equation}
is Zariski dense in $Y$. The following natural condition becomes relevant in this setting.
We say that the polynomial $f$ has an  {\em 
$r$-power divisor on $Y$} if  there is a prime $p$ such that $p^r\mid f(\x)$ for every $\x\in Y(\ZZ_p)$, where $\ZZ_p$ denotes the ring of $p$-adic integers. 
It is clear that when the polynomial $f$ has an $r$-power divisor on $Y$,
the set \eqref{eq:rf} is empty. On the other hand, in this paper we show that 
for some classes of varieties and sufficiently large $r$, the set \eqref{eq:rf}
is Zariski dense provided that $f$ has no $r$-power divisor on $Y$.
We also establish an asymptotic counting formula describing the distribution of this set.

One of the earliest examples arises in work of  Erd\H{o}s \cite{sfree}, who showed that 
$
r^\square(\AA^1,f)\leq d-1,
$ 
provided that $f$ has degree $d$ and contains no 2-power divisors.
Assuming the truth of the $abc$-conjecture, Poonen \cite{poonen} has established the equality
$r^\square(\AA^n,f)=2$ for any polynomial $f\in \ZZ[X_1,\dots,X_n]$ without 2-power divisors.
Our main result establishes finiteness of the permeation number 
$r^\square(Y,f)$
for generic $f$ and 
a general class of {\em symmetric varieties}  $Y\subset \mathbb{A}^n$  over $\QQ$.
Thus, let $G$ be a connected semisimple algebraic group defined over $\QQ$
and let $\iota: G\to \hbox{GL}_n$ be an almost faithful linear representation, also  defined over $\QQ$.
We assume that $G$ acts transitively on $Y$ and that $Y(\ZZ)$ is non-empty.
Then $Y\simeq G/L$, where $L$ is an algebraic subgroup of $G$ defined over $\QQ$.
The symmetric varieties dealt with here are assumed to satisfy 
the following properties:
\begin{itemize}
	\item[(i)] the group $L$ is a symmetric subgroup of $G$ (i.e.\  the Lie algebra of $L$ is equal to 
	the set of fixed points of a non-trivial involution defined over $\QQ$);
	\item[(ii)] the connected component of $L$ has no non-trivial $\QQ$-rational characters;
	\item[(iii)] the group $G$ is $\QQ$-simple and  simply connected; and
	\item[(iv)] the group $G(\RR)$ has no compact factors.
\end{itemize}
It is known that the set $Y(\ZZ)$ of integral points can be parametrised 
by orbits of the arithmetic group 
$
\Gamma=\iota^{-1}(\hbox{GL}_n(\ZZ)).
$
According to  Borel and Harish-Chandra \cite{bh},
the set $Y(\ZZ)$ is a union of finitely many $\Gamma$-orbits.  
This allows us to study the set of $r$-free points using techniques from homogeneous dynamics.

It is very natural to demand that  $f\in \ZZ[X_1,\dots,X_n]$ be devoid of $r$-power divisors on $Y$.  It turns out that our argument also  requires  knowledge of  the arithmetic function
\begin{equation}\label{eq:def-rho}
\rho(\ell)=
\#\left\{
\x\in Y(\ZZ/\ell\ZZ): f(\x)\equiv 0 \bmod{\ell}\right\},
\end{equation}
for  $\ell\in \NN$.  This function is multiplicative, by the Chinese remainder theorem, and we can only handle  $f$ for which the  prime power constituents 
of $\rho(\ell)$ 
satisfy  the following assumption.

\begin{hyp}
For any $r\geq 1$, there exists a constant $C_{Y,f,r}>0$, depending on $r$ and the coefficients of $Y$ and $f$,  
 such that $\rho(p^r)\leq C_{Y,f,r}p^{r(\dim(Y)-1)}$, for any prime $p$.
\end{hyp}

Let $Z$ denote the variety $Y\cap\{f=0\}$.
When $r=1$ the upper bound for $\rho(p)$ in \Hyp~follows from the   Lang--Weil estimate if $Z$ has codimension $1$ in $Y$.
If we further assume that 
$Z$ is a non-singular affine variety of codimension~$1$ in $Y$, then  \Hyp~follows from an application of  Hensel's lemma.
Since $Y$ is non-singular, it is worth emphasising that \Hyp~holds for 
generic choices of $f$.
We shall see that \Hyp~is also satisfied for quadric hypersurfaces (see Lemma \ref{lem:rho'} below).

Bearing this in mind, we may now record our first main result. 

\begin{theorem}\label{t:symmetric}
Let $Y\simeq G/L\subset \AA^n$ be a symmetric variety over $\QQ$ satisfying  (i)--(iv), 
with $Y(\ZZ)\neq \emptyset$.
Assume that $f\in \ZZ[X_1,\dots,X_n]$ satisfies Hypothesis-$\rho$.
Then $r^\square(Y,f)<\infty$.
\end{theorem}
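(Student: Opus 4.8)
The plan is to sieve the integral points of $Y$, ordered by a height, using effective equidistribution of $\Gamma$-orbits from homogeneous dynamics in the role usually played by the circle method. Fix a norm $\|\cdot\|$ on $\AA^{n}(\RR)$, put $B_T=\{\x:\|\x\|\le T\}$ and $N(T)=\#(Y(\ZZ)\cap B_T)$. By Borel--Harish-Chandra and $Y(\ZZ)\ne\emptyset$, $Y(\ZZ)$ is a finite union of $\Gamma$-orbits, and equidistribution of these orbits on $G(\RR)/L(\RR)$ against the invariant measure, together with a volume computation (here (iv) enters), yields an asymptotic $N(T)\sim c\,\Phi(T)$ for a regularly varying $\Phi$ of the shape $T^{a}(\log T)^{b}$. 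More importantly, I would record a congruence refinement: for a modulus $q$ and $\a\in(\ZZ/q\ZZ)^{n}$,
\[
\#\{\x\in Y(\ZZ)\cap B_T:\ \x\equiv\a\bmod{q}\}=\beta_q(\a)\,N(T)+E_q(\a;T),
\]
with $\beta_q(\a)$ the natural relative density; invoking the uniform spectral gap guaranteed by (iii) (property-$(\tau)$ for $G$, plus strong approximation), one can take the error effective, $\sup_{\a}|E_q(\a;T)|\ll q^{A}N(T)T^{-\kappa}$ for fixed $A,\kappa>0$, so that $Y(\ZZ)$ has level of distribution at least $T^{\delta}$ for some $\delta>0$.

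Next I would run the sieve. Since $Y(\ZZ)$ is Zariski dense and, by the same equidistribution, every proper subvariety $W\subsetneq Y$ carries $\#(W(\ZZ)\cap B_T)=o(N(T))$ integral points, it suffices to produce $\gg N(T)$ points $\x\in Y(\ZZ)\cap B_T$ with $f(\x)$ $r$-free, for some fixed $r$. Set $A_q(T)=\#\{\x\in Y(\ZZ)\cap B_T:\ f(\x)\ne 0,\ q\mid f(\x)\}$ and $g(q)=\rho(q)/\#Y(\ZZ/q\ZZ)$, a multiplicative function. Using $|f(\x)|\ll T^{\deg f}$ on $B_T$, the elementary identity $\mathbf{1}[\text{$m$ is $r$-free}]=\sum_{d^{r}\mid m}\mu(d)$ truncated at $d\le D$ gives, with $D=T^{\delta/r}$,
\[
\#\{\x\in Y(\ZZ)\cap B_T:\ f(\x)\text{ is }r\text{-free}\}=\sum_{d\le D}\mu(d)A_{d^{r}}(T)+O\!\Big(\sum_{D<d\le (cT)^{\deg f/r}}A_{d^{r}}(T)\Big)+o(N(T)),
\]
where the final $o(N(T))$ absorbs the contribution of $Y\cap\{f=0\}$. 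From the effective congruence count, $\sum_{d\le D}\mu(d)A_{d^{r}}(T)=N(T)\big(\prod_{p\le D}(1-g(p^{r}))+o(1)\big)$, provided $\delta$ is chosen small enough relative to the spectral gap that the accumulated errors $\sum_{d\le D}|E_{d^{r}}|$ are $o(N(T))$. By Hypothesis-$\rho$, $g(p^{r})\ll p^{-r}$, so the Euler product converges as $D\to\infty$ to a constant $\mathfrak S_r$; and $\mathfrak S_r>0$ once $r$ is large enough that $f$ has no $r$-power divisor on $Y$. The latter holds for all $r\ge r_0(Y,f)$, since $Y(\ZZ)\ne\emptyset$ and (by Hypothesis-$\rho$) $f$ does not vanish identically on $Y$, so any $\x_0\in Y(\ZZ)$ with $f(\x_0)\ne 0$ rules out $r$-power divisors once $r>\log|f(\x_0)|$.

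The hard part will be showing that the truncation error $\sum_{D<d\le(cT)^{\deg f/r}}A_{d^{r}}(T)$ is $o(N(T))$. Here $d^{r}$ runs up to $\asymp T^{\deg f}$, far beyond the level of distribution $T^{\delta}$, so the asymptotic for $A_{d^{r}}(T)$ is no longer available, and the crude geometry-of-numbers bound $A_{d^{r}}(T)\ll\rho(d^{r})(1+T/d^{r})^{\dim Y}$ only barely fails to sum to $o(N(T))$, because $N(T)$ typically has the minimal growth rate $T^{\dim Y-1}$. What is really needed is a \emph{uniform} upper bound of roughly the expected size, $A_{d^{r}}(T)\ll g(d^{r})N(T)$, valid throughout the range of $d$ --- equivalently, a uniform upper bound for the number of integral points of $Y$ in $B_T$ lying in a prescribed residue class modulo $d^{r}$, or on the subvarieties $Y\cap\{f\equiv 0\bmod{d^{r}}\}$ and the fibres $Y\cap\{f=v\}$ with $d^{r}\mid v$. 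For moduli only moderately beyond $T^{\delta}$ I would push the effective equidistribution further; for genuinely large moduli one has to exploit the homogeneous structure of $Y$ to bound integral points on these auxiliary subvarieties, and then take $r$ large enough that the estimate sums to $o(N(T))$ --- in the quadric case this is exactly the point at which the uniform upper bound for integral points on affine quadrics (mentioned in the introduction) is invoked. Granting this, the displayed identity gives $\#\{\x\in Y(\ZZ)\cap B_T:\ f(\x)\text{ is }r\text{-free}\}\ge(\mathfrak S_r+o(1))N(T)\gg N(T)$, which exceeds $\#(W(\ZZ)\cap B_T)$ for every proper subvariety $W\subsetneq Y$ and for $T$ large; hence the set of $\x\in Y(\ZZ)$ with $f(\x)$ $r$-free is Zariski dense in $Y$, and $r^{\square}(Y,f)\le r<\infty$.
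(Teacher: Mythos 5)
Your overall skeleton — Borel--Harish-Chandra, effective equidistribution of congruence orbits uniform in the modulus, M\"obius inversion over $d^r\mid f(\x)$, convergence of the singular series via Hypothesis-$\rho$, and the reduction of Zariski density to a positive-proportion count — is exactly the paper's strategy. But the step you explicitly leave open (``Granting this\dots'') is the crux, and the route you sketch for it is not the one the paper takes and is not needed. You propose to bound $A_{d^r}(T)$ for $d^r$ far beyond the level of distribution by proving uniform upper bounds for integral points on the auxiliary subvarieties $Y\cap\{f=v\}$; such bounds are only established in this paper for quadrics (Theorem \ref{thm:3} and \S\ref{s:largesse}), where they serve to make $r_0$ \emph{explicit}, and no analogue is available for a general symmetric variety. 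The paper's resolution for the general theorem is an elementary but decisive observation: since $k^r\mid f(\x)$ implies $k^2\mid f(\x)$, one has
\[
\card\{\x\in Y(\ZZ):|\x|\le H,\ k^r\mid f(\x)\}\le\card\{\x\in Y(\ZZ):|\x|\le H,\ k^2\mid f(\x)\},
\]
and the right-hand side involves the modulus $k^2\ll H^{2d/r}$, which for $r$ large is again within the range where the equidistribution estimate (Proposition \ref{p:main'}, with its error term polynomial in the modulus) applies. Hypothesis-$\rho$ with exponent $2$ then gives a main term $\ll \rho(k^2)\mu_\infty(Y;H)/k^{2\dim Y}\ll_\ve \mu_\infty(Y;H)k^{-2+\ve}$, which sums over $k>H^\Delta$ to $o(\mu_\infty(Y;H))$. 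This is precisely why the theorem asserts only \emph{finiteness} of $r^\square(Y,f)$: one is free to inflate $r$ until $H^{d/r}$ is small enough for this substitution to close the argument. Without this idea (or a genuine substitute for the uniform upper bounds you invoke), your proof does not go through.

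A secondary point: in the generality of Theorem \ref{t:symmetric} the stabiliser $L$ need not be semisimple and simply connected, so $Y$ may fail the local-to-global principle and your ``natural relative density'' $\beta_q(\a)$ need not describe the distribution of $Y(\ZZ)$ in residue classes. The paper therefore proves the asymptotic orbit-by-orbit on adelic orbits $\mathcal{O}_{\A}$ (Theorem \ref{th:general}, with the Borovoi--Rudnick invariant $\delta(\mathcal{O}_{\A})$ and convergence factors from an Artin $L$-function), and deduces Theorem \ref{t:symmetric} by applying this to the single orbit containing a point $\x_0\in Y(\ZZ)$ with $f(\x_0)\ne 0$: if $|f(\x_0)|=\prod p_i^{r_i}$ and $r_0=1+\max_i r_i$, then $f(\x_0)$ is $r_0$-free, which forces both $\delta(\mathcal{O}_{\A})>0$ and positivity of the orbit's singular series for all $r\ge r_0$. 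Your positivity argument (absence of $r$-power divisors implies $\mathfrak S_r>0$) is only valid as stated when $L$ is semisimple and simply connected.
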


More precisely, we show that there exists $r_0$ such that for $r\ge r_0$, if the set \eqref{eq:rf}
is not empty, then it is Zariski dense in $Y$.
Moreover, if $L$ is additionally assumed to be semisimple and simply connected, then for $r\ge r_0$,
the set \eqref{eq:rf} is Zariski dense provided only that $f$ does not have $r$-power divisors on $Y$.
The value of $r_0$ is not made explicit in this work, but
it can be estimated using our method.
It depends on $\dim(G),\deg(f)$ and on 
the uniform spectral gap property that was  shown by Burger--Sarnak \cite{bs} and Clozel \cite{c} to be 
enjoyed by the action of 
each non-compact simple factor of $G(\RR)$ on the congruence quotients $G(\RR)/\Gamma_\ell$, 
where
\begin{equation}
\label{eq:cong}
\Gamma_\ell=\{\gamma\in \Gamma:\, \iota(\gamma)=id\;\hbox{mod}\; \ell\}.
\end{equation}
Although we shall not pursue it here, we note that 
the arguments in this paper could also be used to generalise the finite saturation results of Nevo and Sarnak \cite{n-s} to 
 a broader class of symmetric varieties. 

Our argument also allows us to establish an asymptotic formula for the number of $r$-free points.
For $r\geq 2$ and a polynomial  $f\in \ZZ[X_1,\dots,X_n]$, define \begin{equation}\label{eq:london0}
N_r(Y,f;H)=\card\{\x\in Y(\ZZ): |\x|\leq H, ~\text{$f(\x)$ is $r$-free}\},
\end{equation}
where $|\x|=\max_{1\leq i\leq n}|x_i|$. 
The main term in the asymptotic formula for $N_r(Y,f;H)$ 
will involve a product of local densities which we proceed to define here.
To define the real density, we assume that the variety $Y$ is the zero locus of a family of polynomials 
$f_1,\ldots,f_\ell \in \ZZ[X_1,\ldots, X_n]$ that satisfy
\begin{equation}
\label{eq:smooth}
\hbox{rank}\left(\frac{\partial f_i}{\partial X_j}\right)=n-\dim(Y)
\end{equation}
everywhere on $Y$. Then we define the real density by
\begin{equation}\label{eq:real}
\mu_\infty(Y;H)=\lim_{\epsilon\rightarrow 0} \frac{1}{\epsilon^\ell}\int_{\substack{
		|\x|\le H\\  |f_1(\x)|,\ldots, |f_\ell(\x)|<\epsilon/2}} \d \x.
\end{equation}
For each prime $p$, the  $p$-adic density is
\begin{equation}\label{eq:padic}
\hat \mu_p(Y,f,r)=\lim_{t\rightarrow \infty} p^{-t\dim(Y)}\#\{\x\in Y(\ZZ/p^t\ZZ):  ~p^r\nmid f(\x)\}.
\end{equation}
We also define the  Euler product 
\begin{equation}\label{eq:euler0}
\mathfrak{S}(Y,f,r)=\prod_{p< \infty}\hat \mu_p(Y,f,r).
\end{equation}
If $L$ is semisimple, then under Hypotheses-$\rho$, this product converges absolutely.

With this notation, we prove the following result. 

\begin{theorem}\label{t:symmetric2}
	Let $Y\simeq G/L\subset \AA^n$ be a symmetric variety over $\QQ$ satisfying  (i)--(iv).
	We assume that $L$ is semisimple and simply connected. 
	Let $f\in \ZZ[X_1,\dots,X_n]$ be a polynomial satisfying Hypothesis-$\rho$.
	Then for all sufficiently large $r$, there exists a  constant    $\delta>0$
	such that 
	$$
	N_r(Y,f;H)=\mathfrak{S}(Y,f,r) \mu_{\infty}(Y;H)
	+O_{r}(\mu_{\infty}(Y;H)^{1-\delta}).
	$$
	Moreover, $\mathfrak{S}(Y,f,r)>0$ provided that $f$ does not have $r$-power divisors on $Y$.
\end{theorem}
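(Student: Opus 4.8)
The plan is to establish Theorem~\ref{t:symmetric2} via a sieve. Writing $Y(\ZZ)_H=\{\x\in Y(\ZZ):|\x|\le H\}$ and using inclusion-exclusion over prime powers dividing $f(\x)$, one expects
\[
N_r(Y,f;H)=\sum_{\substack{d\ge 1\\ d\ r\text{-full}}}\mu(d^{1/?}) \cdots
\]
more precisely, via the identity $\mathbf 1[m\text{ is }r\text{-free}]=\sum_{k^r\mid m}\mu(k)$, one writes
\[
N_r(Y,f;H)=\sum_{k\ge 1}\mu(k)\,\card\{\x\in Y(\ZZ)_H:\ k^r\mid f(\x)\}.
\]
Splitting the sum at a parameter $K=K(H)$, the range $k\le K$ is the ``main'' part and $k>K$ is the ``tail''. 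For the main part one needs, for each fixed $k$, an asymptotic count of integral points on $Y$ of height at most $H$ lying in a prescribed congruence class modulo $k^r$; this is exactly where homogeneous dynamics enters. By Borel--Harish-Chandra the set $Y(\ZZ)$ decomposes into finitely many $\Gamma$-orbits, and equidistribution of such orbits under the $G(\RR)$-action (together with the spectral gap of Burger--Sarnak \cite{bs} and Clozel \cite{c} on the congruence quotients $G(\RR)/\Gamma_{k^r}$) yields, for each orbit, an asymptotic of the shape $\rho_k\cdot\mu_\infty(Y;H)/[\Gamma:\Gamma_{k^r}]\cdot(\text{correction})$ with a power-saving error term whose dependence on $k^r$ can be made polynomial thanks to the \emph{uniform} spectral gap. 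Summing over the finitely many orbits and over $k\le K$, and recognising the arithmetic factor $\sum_{k\ge 1}\mu(k)\rho(k^r)p$-locally as the Euler product $\mathfrak S(Y,f,r)=\prod_p\hat\mu_p(Y,f,r)$, gives the main term $\mathfrak S(Y,f,r)\mu_\infty(Y;H)$.

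For the tail $k>K$ one bounds $\card\{\x\in Y(\ZZ)_H:\ k^r\mid f(\x)\}$ trivially. Here Hypothesis-$\rho$ is the crucial input: it gives $\rho(p^r)\le C_{Y,f,r}p^{r(\dim Y-1)}$, so for a general $r$-full modulus $k^r$ (with $k$ built from primes $p\le$ some bound relative to $H$) one gets $\rho(k^r)\ll_\ve k^{r(\dim Y-1)+\ve}$, which combined with an upper bound of the expected order $\mu_\infty(Y;H)/k^{r\dim Y}$ (plus an error) for the number of points in a residue class yields a tail contribution that is summable once $r$ is large enough; choosing $K$ a small power of $H$ balances this against the accumulated error terms from the main part and produces the saving $\mu_\infty(Y;H)^{1-\delta}$. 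The passage from ``$\rho(p^r)$ is small for each prime individually'' to ``$\rho(k^r)$ is small for composite $k$'' uses multiplicativity of $\rho$ and the CRT decomposition of $\hat\mu_p$; that $L$ is semisimple (hence has no $\QQ$-characters) is what guarantees absolute convergence of $\prod_p\hat\mu_p$ and hence that the main term is genuinely of order $\mu_\infty(Y;H)$.

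It remains to show positivity of $\mathfrak S(Y,f,r)$ when $f$ has no $r$-power divisor on $Y$. Since each factor $\hat\mu_p(Y,f,r)$ satisfies $0\le \hat\mu_p\le 1$, and the product converges, it suffices to show each factor is strictly positive: $\hat\mu_p(Y,f,r)>0$ precisely says there exists $\x\in Y(\ZZ_p)$ with $p^r\nmid f(\x)$, which is exactly the negation of ``$f$ has a $p^r$-power divisor on $Y$''. (One checks that $\hat\mu_p$, being a decreasing limit of the nonnegative quantities $p^{-t\dim Y}\#\{\x\in Y(\ZZ/p^t):p^r\nmid f(\x)\}$, is zero iff the set is empty for all $t$, iff it is empty for the $p$-adic points, by compactness of $Y(\ZZ_p)$ and continuity of $f$.) Convergence of the product then forbids $\prod_p\hat\mu_p=0$, giving $\mathfrak S(Y,f,r)>0$.

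The main obstacle is the main-term analysis: obtaining an \emph{effective, polynomial-in-$k^r$} error term in the equidistribution of $\Gamma_{k^r}$-orbits on $G(\RR)/\Gamma_{k^r}$, uniformly over the congruence modulus. This is precisely what the uniform spectral gap of Burger--Sarnak and Clozel provides, and marrying it with the counting of integral points of bounded height on $Y=G/L$ (in the style of Duke--Rudnick--Sarnak and Eskin--McMullen, but with uniform congruence control) is the technical heart of the proof; once that input is in place, the sieve and the positivity statement follow by the routine arguments sketched above.
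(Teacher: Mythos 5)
Your overall architecture --- the M\"obius identity, the split into a main range $k\le K$ and a tail $k>K$, the use of Borel--Harish-Chandra plus equidistribution with a uniform spectral gap to count points in residue classes modulo $k^r$ with an error polynomial in the modulus, and the positivity argument for $\mathfrak S(Y,f,r)$ --- matches the paper's proof of the main range (Proposition \ref{lem:dynamics}) and of the positivity claim. The gap is in your tail. You propose to bound $\card\{\x\in Y(\ZZ)_H: k^r\mid f(\x)\}$ for $k>K$ by summing, over the $\rho(k^r)$ residue classes $\bxi\bmod{k^r}$, the expected density $\mu_\infty(Y;H)/k^{r\dim Y}$ ``plus an error''. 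But the only available estimate (Proposition \ref{p:main'}) carries an error term $O(\ell^{\dim L+2\dim G}\mu_\infty(Y;H)^{1-\delta})$ with $\ell=k^r$, and in the tail $k$ ranges up to $H^{d/r}$, so $k^r$ ranges up to $H^{d}$ \emph{independently of $r$}. The accumulated error is then of order $k^{r(\dim Y-1+\dim L+2\dim G)}\mu_\infty(Y;H)^{1-\delta}$ per $k$, i.e.\ up to $H^{d(\dim Y-1+\dim L+2\dim G)}\mu_\infty(Y;H)^{1-\delta}$ in total; the exponent does not shrink as $r\to\infty$, so ``taking $r$ large enough'' does not rescue the bound, and there is no genuinely trivial upper bound of the expected order for points of $Y(\ZZ)_H$ in a residue class modulo a modulus comparable to $H^d$ (producing such bounds is exactly the hard content of \S\S4--5 for quadrics).

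The missing idea is the containment $\{k^r\mid f(\x)\}\subseteq\{k^2\mid f(\x)\}$: in the tail one replaces the congruence modulo $k^r$ by the weaker congruence modulo $k^2$, whose modulus is $k^2\ll H^{2d/r}$, an arbitrarily small power of $H$ once $r$ is large. Applying Proposition \ref{p:main'} with $\ell=k^2$, together with Hypothesis-$\rho$ and the bound \eqref{eq:deck}, the main contribution becomes $\sum_{k>H^\Delta}\rho(k^2)\mu_\infty(Y;H)k^{-2\dim Y}\ll\mu_\infty(Y;H)H^{-\Delta}$ and the error contribution becomes $O(H^{\frac{d}{r}(1+6\dim G)}\mu_\infty(Y;H)^{1-\delta})$, both acceptable for $r$ large in terms of $d$, $\dim G$ and $\delta$. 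With this substitution your argument closes; without it the tail estimate fails.
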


Throughout our work, unless stated otherwise, we will allow our implied constants to depend on the polynomial $f$ and the variety $Y$, which are  considered to be fixed once and for all. Any further  dependence will be  explicitly indicated by  appropriate subscripts. In Theorem \ref{t:symmetric2}, for example, 
the implied constant in the error term is allowed to depend on $r$,  on  $f$ and  
on the polynomials defining $Y$.

We also establish an asymptotic formula for $N_r(Y,f;H)$ when $L$ is not assumed to be a semisimple simply connected group (see Theorem \ref{th:general} and Remark \ref{r:BM} below). However, without this assumption, the variety $Y\simeq G/L$ may fail to satisfy the local-to-global principle. Moreover,  the definition of the Euler product \eqref{eq:euler0} requires the introduction of additional convergence factors, so that the main term in the asymptotic formula becomes significantly more involved.

Our remaining results are concerned with producing explicit upper bounds for $r^\square(Y,f)$ for  quadric hypersurfaces.
For   $n\geq 3$, let
 $Q\in \ZZ[X_1,\ldots,X_n]$
be a non-singular indefinite quadratic form and let 
$m$ be a non-zero integer. 
We shall always assume that 
 $-m\det (Q)$ is not the square of an integer when $n=3$.
We  let $Y\subset \AA^n$ denote the affine quadric  
\begin{equation}\label{eq:green}
Q(X_1,\dots,X_n)=m.
\end{equation}
We observe that our general results (Theorem \ref{t:symmetric} and \ref{t:symmetric2}) are applicable in this setting:

\begin{rem}\label{r:quad}
	The assumptions (i)--(iv) are satisfied in the setting of quadric hypersurfaces \eqref{eq:green}
	(with a possible exception of $G$ being $\QQ$-simple when $n=4$, which we discuss separately in Remark \ref{r:22}).
	In the case of quadric hypersurfaces, 
	$G=\hbox{Spin}(Q)$ is the spinor group of $Q$  and  $\iota:G\to \hbox{GL}_n$ is the standard representation 
	of the spinor group of $Q$. We let  $\Gamma=\iota^{-1}(\hbox{GL}_n(\ZZ))$ and 
	$L=\hbox{Stab}_G(\x_0)$, with $\x_0\in Y(\QQ)$.
	Thus 
	$$
	\dim(G)=\tfrac{1}{2}n(n-1)\quad \text{ and }\quad  \dim(L)=\tfrac{1}{2}(n-1)(n-2).
	$$ 
	Moreover,  $L$ is a symmetric subgroup of $G$ and
	$L\simeq \hbox{Spin}(Q|_V)$, where $V$ is the orthogonal complement of $\x_0$.
	In particular, when $n\ge 4$, it follows that  $L$ is a semisimple simply connected algebraic group, and when $n=3$, $L$ is a one-dimensional torus.
	We observe that $\det (Q|_V)=\det(Q)/m$, so that  when $n=3$, $Q|_V$ is equivalent to the quadratic form
	$x^2+m\det(Q)y^2$. Hence, if $-m\det(Q)$ is not a square, $L$ is anisotropic over $\QQ$,
	and the assumption (ii) is satisfied.
	The group $G=\hbox{Spin}(Q)$ is simply connected, so that $G(\RR)$ is connected.
	Moreover, $G(\RR)\simeq \hbox{Spin}(r_1,r_2)$, where $(r_1,r_2)$ is the signature
	of the quadratic form $Q$. Since $Q$ is assumed to be isotropic over $\QQ$,  
	$G(\RR)$ is not compact. It is simple unless $(r_1,r_2)=(2,2)$,
	in which case $G(\RR)\simeq \hbox{SL}_2(\RR)\times \hbox{SL}_2(\RR)$.
	Hence, $G(\RR)$ has no compact factors.
	It also follows that $G$ is $\QQ$-simple, unless $(r_1,r_2)=(2,2)$.
	We discuss the case  $(r_1,r_2)=(2,2)$ in Remark \ref{r:22}.
\end{rem}

Thus  $r^\square(Y,f)<\infty$ for quadratic hypersurfaces \eqref{eq:green} with $Y(\ZZ)\neq 0$ and any integral polynomial $f$ satisfying \Hyp.
When $n\geq 4$ and certain  necessary conditions are met, Baker~\cite{baker1} has used a 
variant of the Hardy--Littlewood circle method  to show that there
exist infinitely many points $\x\in Y(\ZZ)$ with all the coordinates $x_i$ square-free, provided that the obvious local conditions are satisfied. 
A modification of Baker's argument would easily give $r^\square(Y,X_i)=2$, for any $i\in \{1,\dots,n\}$, provided that $n\geq 4$. 
In this paper, we give explicit bounds on $r^\square(Y,f)$ 
and establish an asymptotic formula for $N_r(Y,f;H)$
when $f$ is an arbitrary  non-singular form.

We define the Euler product as in \eqref{eq:euler0}.
When $n\ge 4$, this product is absolutely convergent, and positive provided that 
$f$ does not have $r$-power divisors on $Y$.
It is only conditionally convergent when $n=3$ and $-m\det(Q)$ is not a square.

Our first  result specific to  quadrics concerns the asymptotic behaviour of 
$N_r(Y,f;H) $ in the easier case  $n\ge 4$.

\begin{theorem}\label{thm:2large}
	Let $n\geq 4$ and let $Y\subset \AA^n$ be the quadric hypersurface \eqref{eq:green}.
	Assume that $f$ is a non-singular form of degree $d\geq 2$ and let
	$
	r\geq dn^2/(n-1).
	$
	Then there exists a  constant    $\delta>0$
	such that 
	$$
	N_r(Y,f;H)=\mathfrak{S}(Y,f,r) \mu_{\infty}(Y;H)
	 +O_{r}(H^{n-2-\delta}).
	$$
	Moreover, $\mathfrak{S}(Y,f,r)>0$ provided that $f$ does not have $r$-power divisors on $Y$.
\end{theorem}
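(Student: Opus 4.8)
The plan is to derive Theorem~\ref{thm:2large} from the general counting machinery behind Theorem~\ref{t:symmetric2} by inserting a quantitatively strong equidistribution input specific to affine quadrics. The starting point is the standard inclusion--exclusion decomposition
\[
N_r(Y,f;H)=\sum_{\substack{p\leq z}}\mu(\text{sieve weight at }p)\;+\;\text{(tail over }p>z),
\]
or, more precisely, one writes $\mathbf 1[f(\x)\text{ is }r\text{-free}]=\sum_{d\,:\,d^r\mid f(\x)}\mu(d)$ and truncates the sum over $d$ at a parameter $z=H^\eta$. The ``main'' part $d\leq z$ is handled by counting $\x\in Y(\ZZ)$ with $|\x|\leq H$ in prescribed congruence classes modulo $d^r$, i.e.\ by equidistribution of $Y(\ZZ)$ along the congruence subgroups $\Gamma_\ell$ of \eqref{eq:cong}; the tail $d>z$ must be bounded by a uniform upper bound for the number of $\x\in Y(\ZZ)$, $|\x|\leq H$, with $d^r\mid f(\x)$, summed over $d$. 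Assembling these two pieces and letting the local counts converge produces the Euler product $\mathfrak S(Y,f,r)$ times $\mu_\infty(Y;H)$, with the positivity of $\mathfrak S(Y,f,r)$ under the no-$r$-power-divisor hypothesis coming from Hensel's lemma at each prime together with Hypothesis-$\rho$ to control the infinite product (all of this being the content already packaged in Theorem~\ref{t:symmetric2}, valid since for $n\geq 4$ the stabiliser $L\simeq \mathrm{Spin}(Q|_V)$ is semisimple and simply connected by Remark~\ref{r:quad}).

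The first genuine step is therefore the equidistribution estimate: for $d$ not too large compared to $H$, and for each residue class $\a\bmod d^r$ with $Q(\a)\equiv m$, one needs
\[
\#\{\x\in Y(\ZZ):|\x|\leq H,\ \x\equiv\a\bmod d^r\}
=\frac{\hat\mu_p\text{-type local factor}}{d^{r\dim Y}}\,\mu_\infty(Y;H)+(\text{error}),
\]
with a power-saving error term uniform in $d$ and $\a$. For affine quadrics this is classical: it follows either from the circle method (Heath-Brown's work, or Baker's variant already cited as \cite{baker1}) or from the spectral/mixing approach on $G(\RR)/\Gamma_\ell$ using the uniform spectral gap of Burger--Sarnak \cite{bs} and Clozel \cite{c}. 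The effectivity of the spectral gap is what lets the level $d^r$ grow like a power of $H$, and tracking the exponents here is exactly what forces a lower bound on $r$; the threshold $r\geq dn^2/(n-1)$ should emerge from balancing the level-aspect loss in the equidistribution error against the saving $H^{-\delta}$ one wants, combined with the trivial divisor bound $d^r\mid f(\x)$ forcing $d\ll H^{d\cdot(\text{something})/r}$ on the relevant range.

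The part I expect to be the real obstacle is the tail estimate for $d>z$: bounding
\[
\sum_{d>z}\#\{\x\in Y(\ZZ):|\x|\leq H,\ d^r\mid f(\x)\}
\]
by $o(\mu_\infty(Y;H))=o(H^{n-2})$. Splitting dyadically in $d$, for each $d$ in a dyadic block one needs a uniform upper bound for the number of integral points of bounded height on the \emph{variety} $Y\cap\{f\equiv 0\bmod d^r\}$, and the naive bound (lift each residue class and count points on $Y$ in a box, using Hypothesis-$\rho$ to count the $\rho(d^r)\ll d^{r(\dim Y-1)}$ admissible classes) is wasteful once $d^r$ exceeds $H$. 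The resolution, flagged in the abstract, is to invoke a \emph{uniform upper bound for the density of integral points on general affine quadrics over $\QQ$} — a bound of the shape $\#\{\x\in Y(\ZZ):|\x|\leq H\}\ll_\varepsilon \mathrm{(stuff)}\cdot H^{n-2+\varepsilon}$ with the implied dependence on the quadric and the modulus made fully explicit — applied to the sub-quadric cut out inside $Y$ by each congruence condition, so that large moduli genuinely restrict the box rather than just multiplying the class count. Feeding this uniform quadric bound into the dyadic sum, together with Hypothesis-$\rho$ and the constraint $n\geq 4$ to ensure the geometric series in $d$ converges with room to spare, yields the $O_r(H^{n-2-\delta})$ saving. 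The bookkeeping of all the explicit constants through this tail argument, and checking that the single threshold $r\geq dn^2/(n-1)$ suffices simultaneously for the main term's level range and the tail's convergence, is where the bulk of the work lies; everything else is an instance of the general theorem.
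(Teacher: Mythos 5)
Your overall architecture matches the paper's: the sum over $k\le H^{\Delta}$ is handled by the homogeneous-dynamics equidistribution of Proposition \ref{lem:dynamics} (since $L$ is semisimple and simply connected for $n\ge 4$), and the tail is handled by a uniform upper bound for integral points on affine quadrics (Theorem \ref{thm:3}), applied after writing $\x=\x_0+\ell\y$ so that the congruence class becomes a lattice coset and the equation $Q(\x_0+\ell\y)=m$ becomes an affine quadric in $\y$ in a box of size $\sim H/\ell$. But there is a genuine gap in your tail estimate. You propose to bound $\sum_{k>z}\#\{\x\in Y(\ZZ):|\x|\le H,\ k^r\mid f(\x)\}$ by applying the quadric bound directly at the modulus $\ell=k^r$. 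This cannot work over most of the range: $k$ runs up to $H^{d/r}$, so $k^r$ runs up to $H^{d}\gg H$, and the lattice-restriction mechanism only produces a saving when the modulus satisfies $\ell^{n/(n-1)}\le H$ (this is the hypothesis of Proposition \ref{lem:N(X)''-4}, and without it the count degenerates to roughly $\rho(\ell)\cdot(H/\ell)^{n-3+\ve}\ll \ell^{1+\ve}H^{n-3+\ve}$, which is far larger than $H^{n-2}$). The missing idea is the replacement of the divisibility $k^r\mid f(\x)$ by the weaker $k^j\mid f(\x)$ for a fixed $j<r$, i.e.\ $U_{k^r}(H)\le U_{k^j}(H)$, with $j$ chosen to satisfy two competing constraints: $j>n-1$ so that the resulting bound $U_{k^j}(H)\ll_\ve k^{-j/(n-1)}H^{n-2+\ve}$ sums convergently over $k>H^{\Delta}$ with a power saving, and $k^{jn/(n-1)}\le H$ throughout $k\ll H^{d/r}$ so that the quadric bound applies. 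Taking $j=n$ makes the second constraint read $dn^2/(r(n-1))\le 1$, which is exactly the threshold $r\ge dn^2/(n-1)$.

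This also corrects your attribution of the threshold: it does not come from balancing the level-aspect loss in the equidistribution error against the desired saving. The equidistribution input is only ever used for $k\le H^{\Delta}$ with $\Delta$ chosen arbitrarily small at the end, and imposes no lower bound on $r$ beyond $r\ge 2$; the condition $r\ge dn^2/(n-1)$ is forced entirely by the large-moduli tail as above. A secondary point: to make the lattice step quantitative you need $\det(\sfl_{\bxi})\asymp \ell$ (which uses $\bxi\cdot\nabla Q(\bxi)\equiv 2m\bmod\ell$ with $m\ne 0$), Davenport's minimal basis inequalities to convert the coset count into a box count, and a verification that the resulting quadratic polynomial $q(\bla)$ in \eqref{eq:q} is absolutely irreducible with quadratic part of rank $\ge 2$ after fixing one variable, so that Theorem \ref{thm:3} is actually applicable with its coefficient-uniform constant; these are the steps you deferred as bookkeeping, and they are where Hypothesis-$\rho$ (via Lemma \ref{lem:rho'}) enters to count the $\rho(k^j)\ll_j k^{j(n-2)+\ve}$ admissible classes.
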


Here, we note that $\mu_{\infty}(Y;H)\sim \mu_{\infty}(Y)H^{n-2}$, as $H\to \infty$, for some constant $\mu_{\infty}(Y)>0$.

The case $n=3$ is much harder because quadric surfaces may fail to satisfy the
local-to-global principle. This phenomenon can be analysed using a cohomological invariant
introduced by Borovoi and Rudnick \cite{borovoi,borovoi'}.
This invariant is a locally constant function 
$$
\delta:Y(\A)\to\{0,2\},
$$ 
defined on the adelic space
$
Y(\A)=Y(\RR)\times {\prod}^\prime_{p< \infty} Y(\QQ_p).
$
If $Y(\QQ)=\emptyset$, then $\delta\equiv 0$.
Otherwise, we fix $\x_0\in Y(\QQ)$. Let $G=\hbox{Spin}(Q)$ be the spinor group of $Q$. Then $G$ acts transitively on $Y$ and $G(\A)$ acts on $Y(\A)$, but the latter action is not transitive.
Orbits $\mathcal{O}_{\A}$ for this action are open in $Y(\A)$ and they are restricted direct products
\begin{equation}
\label{eq:aad}
\mathcal{O}_{\A}={\prod}^\prime_{p\le \infty} \mathcal{O}_p,
\end{equation}
where each $\mathcal{O}_p$ is an open orbits of $G(\QQ_p)$ in $Y(\QQ_p)$. We define
$$
\nu_p(\mathcal{O}_p)=\begin{cases}
+1 & \text{if $\mathcal{O}_p=G(\QQ_p)\x_0$},\\
-1 & \text{if $\mathcal{O}_p\ne G(\QQ_p)\x_0$}.
\end{cases}
$$
We note that $\nu_p(\mathcal{O}_p)=1$ for almost all $p$. Let
$$
\nu(\mathcal{O}_\A)=\prod_{p\le\infty} \nu_p(\mathcal{O}_p).
$$
One can show that $\nu$ is independent of the choice of $\x_0\in Y(\QQ)$.
The function is extended  to elements of $Y(\A)$ by setting $\nu(\x)=\nu(G(\A)\x)$, for any $\x\in Y(\A).$ Next,  we set 
$
\delta=1+\nu.
$
This defines a locally constant function on $Y(\A)$.
It was shown in \cite{borovoi,borovoi'} that $\delta(\mathcal{O}_\A)=0$ if and only if $\mathcal{O}_\A$ contains no rational points. (This  theory can be also interpreted in terms of the {\em integral Brauer--Manin obstruction}, as worked out by Colliot-Th\'el\`ene and Xu \cite{cx}.)

As in \eqref{eq:real}--\eqref{eq:padic}, we define local densities of adelic orbits \eqref{eq:aad}. Since orbits of $G(\RR)$ in $Y(\RR)$ are open and connected,
they are equal to connected components of the quadratic surface $Y(\RR)$.
The real density  is defined by
\begin{equation}\label{eq:real2}
\mu_\infty(\mathcal{O}_{\A};H)=\lim_{\epsilon\rightarrow 0} \frac{1}{\epsilon}\int_{\substack{
		\x\in\mathcal{O}^*_\infty,\,|\x|\le H\\  |Q(\x)-m|<\epsilon/2}} \d \x, 
\end{equation}
where $\mathcal{O}^*_\infty$ is a fixed neighbourhood of $\mathcal{O}_\infty$ which does not intersect the other connected components of $Y(\RR)$.
The $p$-adic densities are defined by
$$
\hat \mu_p(\mathcal{O}_{\A},f,r)=\lim_{t\rightarrow \infty} p^{-t(n-1)}\#\{\x\in \mathcal{O}_p\cap Y(\ZZ_p)\;\hbox{mod}\; p^t:  ~p^r\nmid f(\x)\}.
$$
We note that $ Y(\ZZ_p)\subset \mathcal{O}_p$ for almost all $p$. Hence, $\hat \mu_p(\mathcal{O}_{\A},f,r)=\hat \mu_p(Y,f,r)$ for almost all $p$.
We also define the  Euler product 
$$
\mathfrak{S}(\mathcal{O}_{\A},f,r)=\prod_{p<\infty}\hat \mu_p(\mathcal{O}_{\A},f,r),
$$
which differs from the Euler product \eqref{eq:euler0} only at finitely many factors. 

For $n=3$ we have the following result.

\begin{theorem}\label{thm:2}
	Let $n=3$ and let $Y\subset \AA^3$ be the quadric surface \eqref{eq:green}.
	Assume that $f$ is a non-singular form of degree $d\geq 2$ and let    $r\geq 4d^2+\frac{4}{3}d$.
	Then there exists $\delta>0$  such that 
	$$
	N_r(Y,f;H)=\sum_{\mathcal{O}_{\A}\subset Y(\A)}\delta(\mathcal{O}_{\A})\mathfrak{S}(\mathcal{O}_{\A},f,r) \mu_{\infty}(\mathcal{O}_{\A};H)
	 +O_{r}(H^{1-\delta}),
	$$
where the sum is taken over finitely many orbits $\mathcal{O}_{\A}$
	that have non-trivial intersection with $Y(\RR)\times \prod_{p<\infty} Y(\ZZ_p)$.
\end{theorem}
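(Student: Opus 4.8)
The plan is to combine the sieve machinery developed for Theorem \ref{t:symmetric2} (which produces the general symmetric-variety asymptotic) with the explicit spectral-gap input available for $\mathrm{Spin}(Q)$ on ternary quadrics, and then to bookkeep the failure of the local-to-global principle via the Borovoi--Rudnick invariant $\delta$. Concretely, I would first write
\[
N_r(Y,f;H)=\sum_{d^r\mid f}\ \mu(d)\,\#\{\x\in Y(\ZZ): |\x|\le H,\ d\mid f(\x)\}
\]
after the usual reduction to squarefull moduli $d$ (so $d=\prod p^{e_p}$ with each $e_p\ge r$), exactly as in the proof of Theorem \ref{t:symmetric2}. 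The contribution of small $d$, say $d\le H^{\eta}$ for a suitable small $\eta>0$, is handled by an equidistribution statement for the orbits of $\Gamma_d$ acting on $Y(\ZZ)$: each congruence sum $\#\{\x\in Y(\ZZ):|\x|\le H,\ d\mid f(\x)\}$ is evaluated by a lattice-point count on the quadric that, for $n=3$, must be decomposed over the adelic orbits $\mathcal{O}_\A$ because $G(\RR)\times\prod_p G(\QQ_p)$ does not act transitively on $Y(\A)$. This is where the factor $\delta(\mathcal{O}_\A)=1+\nu(\mathcal{O}_\A)$ enters: an orbit contributes to $Y(\QQ)$, hence to $Y(\ZZ)$, precisely when $\delta(\mathcal{O}_\A)=2$, and the main term assembles as $\sum_{\mathcal{O}_\A}\delta(\mathcal{O}_\A)\,\rho_d(\mathcal{O}_\A)\,\mu_\infty(\mathcal{O}_\A;H)$ with $\rho_d(\mathcal{O}_\A)$ the local density of the condition $d\mid f$ in that orbit. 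Summing over $d$ with $\mu(d)$ and completing the Euler products yields $\sum_{\mathcal{O}_\A}\delta(\mathcal{O}_\A)\mathfrak{S}(\mathcal{O}_\A,f,r)\mu_\infty(\mathcal{O}_\A;H)$; the error from the equidistribution step is $O(H^{1-\delta_0})$ for some $\delta_0>0$ coming from the uniform spectral gap of Burger--Sarnak and Clozel applied to the congruence quotients $G(\RR)/\Gamma_d$, where $G=\mathrm{Spin}(Q)$ with $G(\RR)\simeq\mathrm{Spin}(2,1)$ or $\mathrm{Spin}(3)$ — here I would invoke the \emph{explicit} tempered/$\tfrac12$-integrability bounds available for $\mathrm{SL}_2$, which is what ultimately pins down an admissible $\delta$ and the level of the range $d\le H^\eta$.

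The second, and genuinely quantitative, ingredient is the tail estimate: one must show $\sum_{d>H^{\eta}}\#\{\x\in Y(\ZZ):|\x|\le H,\ d\mid f(\x)\}=O(H^{1-\delta})$, where $d$ runs over squarefull moduli with $d\mid f(\x)$. This is exactly the place where the hypothesis $r\ge 4d^2+\tfrac43 d$ is forced. Using Hypothesis-$\rho$ together with the uniform upper bound for the density of integral points on affine quadrics defined over $\QQ$ (the bound alluded to in the abstract and presumably proved elsewhere in the paper), each such count is $\ll_\epsilon (H/d^{1/r}+1)^{1+\epsilon}\cdot(\text{number of admissible residues})$, and the number of squarefull $d\le D$ with a prescribed $r$-th power dividing $f(\x)$ is controlled by $\rho(p^r)\le C p^{r(\dim Y-1)}=C p^{r}$ for $n=3$. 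Balancing the main-range error $H^{1-\delta_0}$ against the tail, which behaves like $H^{2}\sum_{d>H^\eta}\rho(d)/d$, and noting $\rho(p^r)/p^r\le Cp^{-?}$ only when $r$ exceeds $\deg f\cdot(\text{relevant quadric count exponent})$, produces the threshold $r\ge 4d^2+\tfrac43 d$ after the explicit quadric density bound (with its $d^{?}$-type savings in $n=3$) is substituted. I would present this balancing carefully but not grind the arithmetic.

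The main obstacle, and the step I expect to absorb most of the work, is making the adelic-orbit decomposition interact correctly with the sieve. For $n\ge 4$ one has strong approximation and a single main term; for $n=3$ one must run the equidistribution argument \emph{orbit by orbit}, check that $Y(\ZZ_p)\subset\mathcal{O}_p$ for almost all $p$ so that only finitely many adelic orbits meet $Y(\RR)\times\prod_p Y(\ZZ_p)$ and the product $\mathfrak{S}(\mathcal{O}_\A,f,r)$ differs from $\mathfrak{S}(Y,f,r)$ at finitely many places (as recorded in the excerpt), and verify that the Möbius summation and the limit defining $\hat\mu_p(\mathcal{O}_\A,f,r)$ commute with the orbit decomposition — this requires the Borovoi--Rudnick/Colliot-Thélène--Xu description of $Y(\QQ)$ inside $Y(\A)$ in the precise form $\delta=1+\nu$. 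A secondary technical point is the conditional convergence of the Euler product when $-m\det(Q)$ is not a square: the factor $\hat\mu_p(\mathcal{O}_\A,f,r)$ carries a $1+O(p^{-1})$ fluctuation governed by the splitting of the anisotropic torus $L$, so the product must be regularised (paired with the Dirichlet $L$-function of the associated quadratic character) before it can be inserted into an asymptotic; I would handle this exactly as in the $n=3$ discussion following Theorem \ref{t:symmetric2}, isolating the $L$-function factor and absorbing its value into $\mathfrak{S}(\mathcal{O}_\A,f,r)$.
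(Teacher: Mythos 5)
Your high-level architecture matches the paper's: the M\"obius sieve over moduli $k$ with $k^r \mid f(\x)$, a split at $k \le H^\Delta$, an orbit-by-orbit equidistribution argument for the small moduli producing the main term $\sum_{\mathcal{O}_\A}\delta(\mathcal{O}_\A)\mathfrak{S}(\mathcal{O}_\A,f,r)\mu_\infty(\mathcal{O}_\A;H)$ via the Borovoi--Rudnick weights, and the regularisation of the conditionally convergent Euler product by the Artin $L$-factor attached to the anisotropic torus $L$. All of that is carried out in \S\S 2--3 and your description of it is essentially right (modulo the garbled opening identity, which should read $\sum_{k}\mu(k)\#\{\x: k^r\mid f(\x)\}$).

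The genuine gap is in the tail. You propose to bound $\sum_{k>H^\Delta}\#\{\x\in Y(\ZZ): |\x|\le H,\ k^r\mid f(\x)\}$ by fixing residues modulo $k^r$ and invoking a uniform density bound for integral points on quadrics, but this cannot work as stated: $k^r$ ranges up to $H^d$, so a fixed residue class modulo $k^r$ in the box $|\x|\le H$ contains at most one point, while there are $\rho(k^r)\asymp k^r$ admissible classes, giving a total of order $H^d$ with no saving. The paper's essential move, absent from your sketch, is to \emph{weaken} the divisibility $k^r\mid f(\x)$ to $k^j\mid f(\x)$ for a fixed $j$ much smaller than $r$ (here $j=3d+1$), so that the modulus $\ell=k^j$ satisfies $\ell^{4/3}\le H$ throughout the range $k\ll H^{d/r}$; the constraint $\tfrac{4jd}{3r}\le 1$ is exactly what produces the threshold $r\ge \tfrac43 d(3d+1)=4d^2+\tfrac43 d$, and $j=3d+1$ is forced by needing the exponent in the bound $U_{k^j}(H)\ll_{\ve}k^{-j/(3d)}H^{1+\ve}$ to exceed $1$ so that the sum over $k>H^\Delta$ converges with a power saving. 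Proving that bound is where the real work lies: for each residue $\bxi$ one writes $\x=\x_0+\ell\y$, derives the lattice condition $\y\cdot\nabla Q(\bxi)\equiv 0\bmod{\ell}$, passes to a Davenport minimal basis, and applies the uniform Theorem \ref{thm:3} to the induced ternary quadratic; the problematic residues, for which the shortest lattice vector $\m_1$ satisfies $|\m_1|<\ell^{1/(3d)}$, are controlled by averaging over $\m_1$ and invoking the hyperplane-restricted congruence count $\rho(\ell;\c)\ll \ell^{1-1/d}\gcd(\ell,\c)^3$ of Lemma \ref{lem:rho''}, which rests on Stewart's bound for roots of polynomial congruences modulo $p^r$. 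None of these ingredients --- the relaxation $k^r\to k^j$, the minimal-basis reduction, or the auxiliary estimate for $\rho(\ell;\c)$ --- appears in your proposal, so the asserted balancing that "produces the threshold" has no mechanism behind it.
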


The following is an immediate consequence of Theorems \ref{thm:2large} and \ref{thm:2},  giving 
an explicit version of Theorem \ref{t:symmetric} in the setting of quadric hypersurfaces.

\begin{corollary}
	Let $n\geq 3$ and let $Y\subset \AA^n$ be the quadric hypersurface \eqref{eq:green}.
Assume that $f$ is a non-singular form of degree $d\geq 2$.
We set 
$$
r_0(n,d)= \begin{cases}
 	4d^2+\frac{4}{3}d & \text{if $n=3$,}\\
 	dn^2/(n-1) & \text{if $n\geq 4$.}
 \end{cases}
$$
When $n\ge 4$, we denote by $r_0(Y,f)$ the least $r$ such that 
$f$ has no $r$-power divisors on $Y$.
When $n=3$, we denote by $r_0(Y,f)$ the least $r$ such that
there exists $\x\in Y(\ZZ)$ with $f(\x)$ being r-free.
Then 
 $$r^\square(Y,f)\leq \max\{ r_0(n,d), r_0(Y,f)\}.
 $$
 \end{corollary}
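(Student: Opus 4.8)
The plan is to deduce the Corollary from the refinement of Theorem \ref{t:symmetric} stated immediately after it: namely that there is a threshold $r_0$ (depending only on $\dim(G)$, $\deg(f)$ and the spectral gap) such that, for all $r\ge r_0$, the set \eqref{eq:rf} is Zariski dense as soon as it is non-empty, and moreover, when $L$ is semisimple and simply connected, as soon as $f$ has no $r$-power divisor on $Y$. In the quadric setting the proofs of Theorems \ref{thm:2large} and \ref{thm:2} make this threshold explicit: those arguments establish the equidistribution of the $r$-free locus --- of which the stated asymptotic formulae are a by-product --- for every $r\ge r_0(n,d)$, so one may take $r_0=r_0(n,d)$ here. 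Write $r^\star=\max\{r_0(n,d),r_0(Y,f)\}$. It then suffices to check that \eqref{eq:rf} with $r=r^\star$ is Zariski dense, for this gives $r^\square(Y,f)\le r^\star$.

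First consider $n\ge 4$. By Remark \ref{r:quad} (together with Remark \ref{r:22} for the signature $(2,2)$ sub-case of $n=4$) the variety $Y\simeq G/L$ satisfies (i)--(iv) and $L$ is semisimple and simply connected, so the refinement applies in its stronger form: for $r\ge r_0(n,d)$ the set \eqref{eq:rf} is Zariski dense provided $f$ has no $r$-power divisor on $Y$. Apply this with $r=r^\star$: we have $r^\star\ge r_0(n,d)$, and, since $r^\star\ge r_0(Y,f)$ while the existence of an $s$-power divisor of $f$ on $Y$ is non-increasing in $s$ (if $p^s\mid f(\x)$ for all $\x\in Y(\ZZ_p)$ then so does $p^{s'}$ for every $s'\le s$), minimality of $r_0(Y,f)$ forces $f$ to have no $r^\star$-power divisor on $Y$. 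Hence \eqref{eq:rf} with $r=r^\star$ is Zariski dense.

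For $n=3$ one uses the weaker half of the refinement, since by Remark \ref{r:quad} the group $L$ is now a one-dimensional torus, anisotropic over $\QQ$ because $-m\det(Q)$ is not a square, and in particular not semisimple. Thus for $r\ge r_0(3,d)=4d^2+\tfrac{4}{3}d$ the set \eqref{eq:rf} is Zariski dense \emph{once it is non-empty}. Take $r=r^\star=\max\{r_0(3,d),r_0(Y,f)\}$, where for $n=3$ the quantity $r_0(Y,f)$ is the least $r$ for which some $\x\in Y(\ZZ)$ has $f(\x)$ being $r$-free. Since $r^\star\ge r_0(Y,f)$, there is such an $\x$ with $f(\x)$ being $r_0(Y,f)$-free, hence $r^\star$-free, so \eqref{eq:rf} with $r=r^\star$ is non-empty; as also $r^\star\ge r_0(3,d)$, it is Zariski dense. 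This completes the proof.

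The one point that is not purely formal is the identification of the Zariski-density threshold with the explicit value $r_0(n,d)$. It is not visible from the asymptotic formulae of Theorems \ref{thm:2large} and \ref{thm:2} on their own, for a main term of order $H^{n-2}$ with positive leading constant does not preclude the $r$-free points from accumulating on a proper subvariety --- on an affine quadric surface, for instance, a union of finitely many rational lines already carries $\asymp H$ integral points, the same order as the main term, and Erd\H{o}s's theorem allows $f$ to be $r$-free along such a line infinitely often. What rescues the argument is that the proofs of Theorems \ref{thm:2large} and \ref{thm:2} in fact yield the equidistribution of the $r$-free locus for $r\ge r_0(n,d)$, and Zariski density is an immediate consequence; granting that input, the Corollary is the bookkeeping above combined with the case distinction of Remark \ref{r:quad}.
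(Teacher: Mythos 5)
Your proof is correct and matches the paper's intended deduction: the corollary is stated there as an immediate consequence of Theorems \ref{thm:2large} and \ref{thm:2}, and the bookkeeping you carry out (monotonicity in $r$ of both ``$f$ has no $r$-power divisor on $Y$'' and ``$f(\x)$ is $r$-free'', together with the case split on $L$ from Remark \ref{r:quad} and Remark \ref{r:22}) is exactly what is needed. The non-formal input you flag --- that a positive main term of order $H^{n-2}$ forces Zariski density --- is supplied in the paper by Lemma \ref{l:e} and \eqref{eq:g}, which bound the number of integral points of height at most $H$ on any proper subvariety of $Y$ by $O(\mu_\infty(Y;H)^{1-\eta})$; note, incidentally, that your cautionary example of rational lines on an affine quadric surface cannot occur here, since for $n=3$ the standing hypothesis that $-m\det(Q)$ is not a square is precisely what excludes such lines.
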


	We illustrate Theorem \ref{thm:2} with some examples borrowed from the work of Borovoi and Rudnick \cite{borovoi,borovoi'}.

\begin{example}[\S 6.4.1 in \cite{borovoi}]
		Let 
		$
		Q(X_1,X_2,X_3)=-9X_1^2+2X_1X_2+7X_2^2+2X_3^2
		$ and  let $m=1$.
		Then the equation defining $Y$ can be rewritten
		$$
		(X_2-X_1)(9X_1+7X_2)=1-2X_3^2.
		$$
		One easily checks that $(-\frac{1}{2},\frac{1}{2},1)$ and $(\frac{1}{3},0,1)$ are points in $Y(\QQ)$, so that 
		there are solutions over $\ZZ_p$ for every prime $p$ with $X_3=1$. 
		Moreover, in the asymptotic formula for $N_2(Y,X_3;H)$ one finds that $\mathfrak{S}(Y,X_3,2)>0$. On the other hand, $Y(\ZZ)=\emptyset$.
		From the point of view of Theorem \ref{thm:2} this means that
		$\delta(\mathcal{O}_{\A})=0$
		for all adelic orbits $\mathcal{O}_{\A}$
		that have non-trivial intersection with $Y(\RR)\times \prod_{p<\infty} Y(\ZZ_p)$.
\end{example}

\begin{example}[\S 4 in \cite{borovoi'}]
Let us assume that the hyperboloid $Y(\RR)$ has two connected components.
		Consider the involution 
		$$
		\iota:Y(\A)\to Y(\A), \quad (\y_\infty,\y_f)\mapsto (-\y_\infty,\y_f).
		$$
		It is clear that $\iota$ 
		maps orbits $\mathcal{O}_\A$ to orbits, and it follows from 
		the definition of the invariant $\nu$ that $\nu(\iota(\mathcal{O}_\A))=-\nu(\mathcal{O}_\A)$. Hence,
		$
		\delta(\iota(\mathcal{O}_\A))+\delta(\mathcal{O}_\A)=2.
		$
		Moreover, 
		$$
		\mathfrak{S}(\iota(\mathcal{O}_{\A}),f,r)=
		\mathfrak{S}(\mathcal{O}_{\A},f,r) \quad \text{ and } \quad 
		\mu_{\infty}(\iota(\mathcal{O}_{\A});H)=\mu_\infty(\mathcal{O}_{\A};H).
		$$
		Hence, Theorem \ref{thm:2} implies that
			\begin{align*}
			N_r(Y,f;H)&=\sum_{\mathcal{O}_{\A}\subset Y(\A)}\mathfrak{S}(\mathcal{O}_{\A},f,r) \mu_{\infty}(\mathcal{O}_{\A};H)
			+O_{r}(H^{1-\delta})\\
			&=\mathfrak{S}(Y,f,r) \mu_{\infty}(Y;H)
			+O_{r}(H^{1-\delta}).
			\end{align*}
		In this case the main term happens to satisfy the Hardy--Littlewood prediction
		even though the integral points are far from being equidistributed with respect to
		the orbits $\mathcal{O}_\A$. Indeed, among $\iota(\mathcal{O}_\A)$ and $\mathcal{O}_\A$,
		only one of the sets contains integral points.
\end{example}

\begin{example}[\S 3 in \cite{borovoi'}]
Assume that $Y(\ZZ)\ne \emptyset$, but there exists a quadratic form 
		in the genus of $Q$ which does not represent $m$ over $\ZZ$.  In this case,
		Theorem \ref{thm:2} gives
			\begin{align*}
			N_r(Y,f;H)
			&=2\mathfrak{S}(Y,f,r) \mu_{\infty}(Y;H)
			+O_{r}(H^{1-\delta}).
			\end{align*}
		Indeed, in this case it was was proved in \cite{borovoi'} that $Y(\RR)\times \prod_{p<\infty} Y(\ZZ_p)$ is contained in a single orbit $\mathcal{O}_\A$
		with $\delta(\mathcal{O}_\A)=2$.
	\end{example}

We can do better than Theorems \ref{thm:2large} and \ref{thm:2} when $f$ is linear,
in which case one can  actually produce  an asymptotic formula for $N_r(Y,f;H)$, for all $r\geq 2$.
This has the following outcome.

\begin{theorem}\label{thm:1}
Let $n\geq 3$ and let $Y\subset \AA^n$ be the quadric hypersurface \eqref{eq:green}, 
with $Y(\ZZ)\neq \emptyset$.
Assume that $f$ is a linear form having no 2-power divisors on $Y$. 
When $n=3$, we additionally assume that there exists $\x\in Y(\ZZ)$ such that $f(\x)$ is square-free. Then 
 $r^\square(Y,f)=2$.
\end{theorem}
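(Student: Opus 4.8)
\emph{Proof plan.} The point of linearity is that, after a normalisation, the condition $q\mid f(\x)$ confines $\x$ to a single residue class modulo $q$, hence to a coset of an index-$q$ sublattice of $\ZZ^n$; so for each modulus the relevant count reduces to counting integral points of bounded height on one affine quadric, and the counting theory can be combined with a square-free sieve that needs no growth of the exponent. I begin with two reductions. Writing $f=g\ell$ with $g$ the content and $\ell$ primitive, the hypothesis that $f$ has no $2$-power divisor on $Y$ forces $g$ to be square-free; as $g$ is then a fixed square-free integer, $\gcd(g,\ell(\x))$ is automatically square-free, so $f(\x)$ is square-free if and only if $\ell(\x)$ is, and $N_2(Y,f;H)=N_2(Y,\ell;H)$. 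Replacing $f$ by $\ell$ and applying a suitable element of $\mathrm{GL}_n(\ZZ)$ --- which carries $Y(\ZZ)$ bijectively onto $Y'(\ZZ)$ for a $\ZZ$-equivalent form $Q'$, alters the height only by bounded factors, and does not affect Zariski density, which is all we need --- we may assume $f=X_1$, still with no $2$-power divisor on $Y$, and, when $n=3$, still admitting a point of $Y(\ZZ)$ at which $f$ is square-free. Since $f(\x)=0$ is never square-free, Möbius inversion gives
\[
N_2(Y,f;H)=\sum_{d\geq 1}\mu(d)\,\#\bigl\{\x\in Y(\ZZ):\ |\x|\leq H,\ 0<|x_1|,\ d^2\mid x_1\bigr\},
\]
and the inner set is empty as soon as $d^2>H$, so the sum runs over $d\leq\sqrt H$ only.

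Next I split the sum at a parameter $D$, to be sent to infinity after $H$. For fixed $d$ the substitution $x_1=d^2y$ identifies the inner set with the integral points of the affine quadric $Y_d:\,Q(d^2y,x_2,\dots,x_n)=m$ lying in the box $|y|\leq H/d^2$, $|x_2|,\dots,|x_n|\leq H$; here $Y_d$ is again non-singular and indefinite, with discriminant $d^4\det Q$. For $D<d\leq\sqrt H$ I invoke the uniform upper bound for the density of integral points on affine quadrics (valid for $Y_d$ with implied constant independent of $d$, and relying on Hypothesis-$\rho$ for $Q$; cf.\ Lemma \ref{lem:rho'}): this gives $\ll_\varepsilon H^{n-2}d^{-2+\varepsilon}$ for the inner count, so the range $d>D$ contributes $O_\varepsilon(H^{n-2}D^{-1+\varepsilon})$. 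For each fixed $d\leq D$ I instead apply the ordinary (unweighted) asymptotic counting formula for $Y_d$ --- the formula underlying Theorem \ref{thm:2large} when $n\geq4$, and, when $n=3$, the Borovoi--Rudnick-corrected formula underlying Theorem \ref{thm:2}, with the invariant $\delta$ of \cite{borovoi,borovoi'} --- with no $r$-free sieve, hence no constraint on any exponent. Letting $H\to\infty$ and then $D\to\infty$, the local densities attached to the $Y_d$ recombine (by multiplicativity of $\rho$ and the absolute convergence available for $r=2$ when $L$ is semisimple, i.e.\ $n\geq4$, or by conditional convergence when $n=3$) into a main term of size $\asymp H^{n-2}$. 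For $n\geq4$ this main term equals $\mathfrak{S}(Y,f,2)\,\mu_\infty(Y;H)$, and $\mathfrak{S}(Y,f,2)>0$ because the product \eqref{eq:euler0} converges absolutely and every local factor is positive, $f=X_1$ having no $2$-power divisor on $Y$. For $n=3$ it equals $\sum_{\mathcal{O}_{\A}}\delta(\mathcal{O}_{\A})\mathfrak{S}(\mathcal{O}_{\A},f,2)\mu_\infty(\mathcal{O}_{\A};H)$ as in Theorem \ref{thm:2}, and the supplementary hypothesis is precisely what makes it positive: if $\x_0\in Y(\ZZ)$ has $f(\x_0)$ square-free, the adelic orbit $\mathcal{O}_{\A}$ through $\x_0$ has $\delta(\mathcal{O}_{\A})=2$, and, as $p^2\nmid f(\x_0)$ for every $p$, has $\hat\mu_p(\mathcal{O}_{\A},f,2)>0$ for every $p$; so its contribution is strictly positive while every other term is non-negative.

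It remains to pass from the lower bound $N_2(Y,f;H)\gg H^{n-2}$ to Zariski density. The counting statements invoked above are in fact equidistribution statements, so the whole argument goes through with $|\x|\leq H$ replaced by $\x/H\in B$ for an arbitrary box $B$; the real density then localises to a positive integral over $B$ whenever $B$ meets the relevant real orbit (all of $Y(\RR)$ for $n\geq4$, and, for $n=3$, a component $\mathcal{O}_\infty$ with $\delta(\mathcal{O}_{\A})=2$). Hence the set of $\x\in Y(\ZZ)$ with $f(\x)$ square-free is dense, for the real topology, in a full-dimensional real-analytic submanifold of $Y(\RR)$, and since $Y$ is irreducible it cannot be contained in a proper closed subvariety. (For $n=3$ one can also argue directly: since $-m\det Q$ is not a square, the form $Q|_{\x_1^\perp}$ is anisotropic over $\QQ$ for every $\x_1\in Y(\QQ)$, so $Y$ contains no line defined over $\QQ$, and a quick analysis of the remaining curves on $Y$ shows every proper closed subvariety carries $o(N_2(Y,f;H))$ integral points of height $\leq H$.) In either case the square-free-value points are Zariski dense in $Y$, so $r^\square(Y,f)=2$.

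I expect the crux to be the uniformity in $d$: the bound $\ll_\varepsilon H^{n-2}d^{-2+\varepsilon}$ for the sliced quadrics $Y_d$, whose discriminants grow like $d^4$, must hold with an implied constant that does not deteriorate with $d$ --- this is exactly the uniform affine-quadric estimate developed elsewhere in the paper. A secondary, more bookkeeping-heavy point, specific to $n=3$, is to check that the adelic orbit decomposition \eqref{eq:aad} and the invariant $\delta$ transform compatibly under the slicing $x_1=d^2y$, so that the per-$d$ main terms really do reassemble into the Brauer--Manin-corrected Euler product recorded above.
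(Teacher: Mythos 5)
Your architecture is essentially the paper's: M\"obius inversion over $d^2\mid f(\x)$, an asymptotic (via the homogeneous-dynamics counting and, for $n=3$, the Borovoi--Rudnick orbit decomposition) for the small moduli, a uniform upper bound for the tail, positivity of the main term from the orbit through the given square-free point when $n=3$, and Zariski density from equidistribution. The gap is in the tail bound, which you rightly call the crux but do not actually establish. After the substitution $x_1=d^2y$ the quadric $Y_d$ sits in the skew box $|y|\le H/d^2$, $|x_2|,\dots,|x_n|\le H$, and ``the uniform upper bound for affine quadrics'' (Theorem \ref{thm:3}) applied to $Y_d$ in the ambient box of side $H$ gives only $O_\varepsilon(H^{n-2+\varepsilon})$ with \emph{no} saving in $d$: the bound $M(q;B)\ll_{\ve,\nu} B^{\nu-2+\ve}$ is by design insensitive to the coefficients of $q$, so the growth of the discriminant of $Y_d$ buys nothing, and covering the skew box by cubes only makes matters worse. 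The factor $d^{-2}$ must instead come from slicing along $x_1=h$: there are $O(H/d^2)$ admissible slices, and one needs each slice $Q(h,x_2,\dots,x_n)=m$ to carry $O_\varepsilon(H^{n-3+\varepsilon})$ points \emph{uniformly in $h$}. For $n\ge 4$ this is Theorem \ref{thm:3} in $n-1\ge 3$ variables, after verifying that each slice is absolutely irreducible with quadratic part of rank $\ge 2$ (done in \S\ref{s:large1} via the homogenisation $R_h$). For $n=3$ the slices are conics and the desired per-slice bound $O_\varepsilon(H^{\varepsilon})$ is false in general: when $P=Q(X_1,X_2,0)$ has rank $1$ the slices have quadratic part of rank $1$ and may be reducible over $\QQ$, so neither Theorem \ref{thm:3} nor Lemma \ref{lem:affquad} applies, and a slice can hold $\gg H$ integral points. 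The paper's Proposition \ref{lem:N(X)} handles this case by a genuinely different argument, reducing to $x_1^2\equiv d\bmod{x_3}$ and using the divisor-type bound $\nu(q;d)\ll_\ve q^{\ve}$ for the number of square roots. Without some substitute for this, your tail estimate, and hence the proof for $n=3$, does not close. (Note also that Hypothesis-$\rho$ and Lemma \ref{lem:rho'} play no role in the tail; they enter only in the small-moduli main term.)

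Two smaller points. First, your opening reduction is incorrect as stated: writing $f=g\ell$ with $g$ the content, $f(\x)=g\,\ell(\x)$ is square-free if and only if $g$ and $\ell(\x)$ are square-free \emph{and coprime}, so $N_2(Y,f;H)\ne N_2(Y,\ell;H)$ in general (take $g=2$ and $\ell(\x)$ even). This is repairable---either run the sieve on $f$ itself, or add a further M\"obius sum over divisors of $g$---but as written the reduction to $f=X_1$ loses the coprimality constraint. Second, your two-limit splitting at a fixed $D$ is fine for Zariski density but only yields $N_2=\mathfrak{S}(Y,f,2)\mu_\infty(Y;H)+o(H^{n-2})$; the paper's split at $k\le H^{\Delta}$ gives a power saving. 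Neither of these affects the truth of the theorem; the missing ingredient is the $n=3$ tail bound above.
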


We now return  to the setting of a general
 symmetric variety
 $Y\simeq G/L\subset \AA^n$ defined over $\QQ$, with $G,L$ satisfying (i)--(iv).
 Let $f\in \ZZ[X_1,\dots,X_n]$ be a polynomial that satisfies \Hyp. Recall  the counting function 
$N_r(Y,f;H)$ from 
\eqref{eq:london0}.   The igniting spark in its analysis is provided by the indicator function
$$
\sum_{k^r\mid N}
\mu(k)=
\begin{cases}
1 & \text{if $N$ is $r$-free,}\\
0 & \text{otherwise,}
\end{cases}
$$
where $N\in \ZZ$ is non-zero and $\mu$ is the M\"obius function. 
Thus 
\begin{equation}\label{eq:spud}
N_r(Y,f;H)=\sum_{k=1}^\infty \mu(k) 
\card \left\{\x\in Y(\ZZ): 
\begin{array}{l}
|\x|\leq H\\
0\neq f(\x)\equiv 0 \bmod{k^r}
\end{array}{}
\right\}.
\end{equation}
Since $f$ has degree $d$ it is clear that 
the summand vanishes 
unless $k\ll H^{d/r}$.  
Moreover, since $Y(\ZZ)$ consists of finitely many $\Gamma$-orbits, 
we may break the sum into  residue classes modulo $k^r$ and find 
that  estimating it reduces to 
estimating
\begin{equation}
\label{eq:orb_num}
\#\{\x\in \Gamma \y: |\x|\leq H, ~\x\equiv \bxi \bmod{k^r}\},
\end{equation}
for given  $\y\in Y(\ZZ)$
and
given $\bxi \in Y(\ZZ/k^r\ZZ)$ such that $f(\bxi)\equiv 0\bmod{k^r}$.
The sets
 $\{\x\in \Gamma \y: \x\equiv \bxi \bmod{k^r}\}$
are finite unions of $\Gamma_{k^r}$-orbits, where $\Gamma_\ell$ is given by \eqref{eq:cong} for $\ell\in \NN$.
Thus 
the investigation of \eqref{eq:orb_num} reduces 
to establishing an asymptotic formula for 
$\#\{ \x \in \Gamma_\ell \y:\, |\x|\le H\}$,
as $H\to\infty$, which is uniform in $\ell$. This estimate is the focus of \S \ref{s:counting} and lies at the heart of this paper (see Theorem \ref{th:counting}).  The error term  involves a polynomial dependence on $\ell$, meaning that it is only useful  for handling  the contribution 
to  $N_r(Y,f;H)$ from sufficiently small values of $k^r$.  

By taking $r$ sufficiently large we can ensure that $k$ is an arbitrarily small power of $H$. In this way,
on observing that 
$$
\card \left\{\x\in Y(\ZZ): |\x|\leq H, ~ k^r\mid f(\x)
\right\}\leq 
\card \left\{\x\in Y(\ZZ): |\x|\leq H, ~ k^2\mid f(\x)
\right\},
$$
it is possible to reapply the results from \S \ref{s:counting} with $\ell=k^2$, in order to show that the larger values of $k^r$ make a negligible contribution to $N_r(Y,f;H)$. This 
summarises our strategy behind the proof of Theorem \ref{t:symmetric2}.
The proof of Theorem \ref{t:symmetric} requires a generalisation of Theorem \ref{t:symmetric2},
which gives an asymptotic formula for 
the number $r$-free points lying on a given adelic orbit (see \S \ref{s:gene}).

Our proof of Theorems \ref{thm:2large}, \ref{thm:2} and \ref{thm:1} 
gets under way in \S \ref{s:over} and 
relies on a more efficient method for handling the contribution from large values of $k^r.$  Thus, when $Y\subset \AA^n$ is given by  \eqref{eq:green}, 
we will transform the problem into one that involves counting integral points of bounded size on  affine quadrics. Our bound needs to be uniform in the coefficients of the defining  polynomial and, since it may be of general interest,   we proceed to describe it here.
Let $q\in \ZZ[T_1,\ldots,T_\nu]$
be  a non-zero quadratic polynomial,
for $\nu\geq 2$. 
Let
$$
M(q;B)=\#\{\bt \in \ZZ^\nu: |\bt| \leq B, ~q(\bt)=0\},
$$
for any $B \geq 1$.  
We will require an upper bound for $M(q;B)$ which is uniform in the coefficients of $q$ and which 
is essentially as sharp and as general as possible. 
A trivial estimate is 
$M(q;B)=O_{\nu}(B^{\nu-1})$, which is optimal when $q$ is reducible over $\QQ$.
Assuming that $q$ is irreducible over $\QQ$, a result of Pila \cite{pila-ast} gives
$M(q;B)=O_{\ve,\nu}(B^{\nu-3/2+\ve})$, for any $\ve>0$. Again, this is essentially best possible,
as consideration of the polynomial 
$T_1-T_2^2$ shows. 
Let $q_0$ denote the quadratic part of $q$, so that $q_0=T_2^2$ in the previous example. 
One might hope for an improved bound when $q_0$  has rank at least  $2$. 
This is confirmed in the following result, which is a straightforward modification of ideas developed by Browning, Heath-Brown and Salberger \cite[\S\S 4--5]{pila}.

\begin{theorem}\label{thm:3}
Let $q\in \ZZ[T_1,\ldots,T_\nu]$ be quadratic, with $\nu\geq 2$. Let $\ve>0$.
Assume that $q$ is irreducible over $\QQ$ and that $\rank(q_0)\geq 2$. Then 
$$M(q;B)=O_{\ve,\nu}(B^{\nu-2+\ve}).$$ 
\end{theorem}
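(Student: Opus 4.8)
The plan is to treat separately the contribution of the singular locus and the regular part of the quadric, following the general philosophy of \cite[\S\S 4--5]{pila}. Write $q = q_0 + \ell + c$ with $q_0$ a nonzero quadratic form of rank $\rho_0 = \rank(q_0) \ge 2$, $\ell$ linear and $c$ constant. After a rational (not necessarily unimodular) change of coordinates we may arrange that $q_0$ only involves the first $\rho_0$ variables, but since we need uniformity in the coefficients of $q$ this has to be done with control on the denominators; alternatively one works with the projective closure $\bar q \subset \PP^\nu$ and its quadratic part. The affine variety $X = \{q = 0\} \subset \AA^\nu$ has dimension $\nu - 1$, so we expect $M(q;B) \ll B^{\nu-2+\ve}$, i.e.\ a saving of essentially one full power of $B$ over the trivial bound coming from the fact that $X$ is not a union of hyperplanes (which is exactly where irreducibility of $q$ together with $\rho_0 \ge 2$ is used — if $\rho_0 \le 1$ then $T_1 - T_2^2 = 0$ type examples have too many points).

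First I would dispose of the easy cases: if $q_0$ has rank $\ge 3$, or rank $2$ but the conic/quadric at infinity behaves well, one can often reduce directly. The core case is $\rho_0 = 2$. Here the key device is to fibre $\ZZ^\nu$ over the values of the $\nu - \rho_0 = \nu - 2$ variables not appearing in $q_0$ (again after a change of coordinates, with denominators absorbed into $\ve$). For each fixed value $\u$ of those variables, the residual equation in the remaining $\rho_0 = 2$ variables is $q_0(\t') + (\text{linear in }\t') + (\text{const depending on }\u) = 0$, a binary quadratic equation. A binary quadratic polynomial that is not identically zero has $O_\ve(B^\ve)$ zeros of height $\le B$ \emph{unless} it factors into two linear forms over $\QQ$ (a Pell-type or line situation), in which case it can have $\gg B$ solutions. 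So the plan is: bound the ``generic fibre'' contribution by $B^{\nu-2} \cdot B^\ve$, and then separately control the set of bad $\u$ for which the residual binary form degenerates. Irreducibility of $q$ over $\QQ$ is precisely what prevents the residual binary form from degenerating for \emph{all} $\u$ simultaneously, and a Bezout/dimension argument shows the locus of bad $\u$ lies on a proper subvariety; one then bounds the bad contribution by induction on $\nu$ or by a direct estimate $\ll B^{\nu-3}\cdot B = B^{\nu-2}$, absorbing factors $B^\ve$.

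The main obstacle I anticipate is the uniformity in the coefficients of $q$: the coordinate changes used to isolate $q_0$ and to split off the non-degenerate variables involve the entries of the matrix of $q_0$, and a priori introduce denominators that could be as large as powers of the coefficients. One must either run the determinant-method / projective-closure argument of \cite[\S\S 4--5]{pila} directly in the original coordinates (where the relevant estimates are already coefficient-uniform), or show that the number of residue classes modulo such denominators that actually contain points of height $\le B$ is itself bounded appropriately — the standard trick being that lattice points counted in a sublattice of large determinant are correspondingly sparse, so the denominators help rather than hurt. The remaining routine work is the bookkeeping of the $B^\ve$ factors through the fibration and the induction, which I will not grind through here; it goes exactly as in the cited sections of \cite{pila}.
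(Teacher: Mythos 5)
Your overall architecture (slice the quadric, get $O_\ve(B^\ve)$ per good slice, control the degenerate slices, induct) matches the spirit of the paper's argument, but the proposal has a genuine gap at exactly the point you flag and then leave unresolved: coefficient uniformity. The rational change of coordinates that isolates $q_0$ in $\rank(q_0)$ of the variables has denominators governed by the entries of the matrix of $q_0$; after clearing them, the box $|\t|\le B$ is distorted into a region of size $CB$ with $C$ depending on $q$, so every subsequent fibre estimate $O_\ve((CB)^\ve)$ and every count of bad fibre parameters carries a constant depending on $q$ --- precisely what the theorem forbids. Your two suggested fixes do not repair this: the ``sparse sublattice'' heuristic addresses lattice density, not the height inflation, and ``run the argument in the original coordinates'' is the whole difficulty, not a solution. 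The paper never diagonalises $q_0$. It inducts on $\nu$ one variable at a time and, at each step, shows that the \emph{defective} directions $\a$ (those for which the hyperplane slice \eqref{eq:pair} fails to be absolutely irreducible or has quadratic part of rank $\le 1$) lie on a proper subvariety $E=E_1\cup E_2\subset\PP^{\nu-1}$ of degree $O_\nu(1)$: $E_1$ comes from elimination theory together with the Bertini theorem of Fulton--Lazarsfeld, and $E_2$ from the $2\times 2$ minors together with the Swinnerton-Dyer facts on how ranks drop under hyperplane sections. A trivial counting argument then yields a non-defective $\a\in\ZZ^\nu$ with $|\a|=O_\nu(1)$; a \emph{unimodular} change of variables sends $\a$ to $(0,\ldots,0,1)$, every slice $q_k=q(T_1,\ldots,T_{\nu-1},k)$ then satisfies the inductive hypothesis, and $M(q;B)\le\sum_{|k|\le B}M(q_k;B)$ closes the induction. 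This bounded-height integral slicing direction is the key device missing from your plan.

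Two further omissions. Your plan does not start correctly at the bottom: the case $\nu=3$ with $q=q_0$ homogeneous of rank $3$ cannot be sliced at all, since every plane through the origin meets a rank-$3$ ternary form in a binary quadratic form, which always splits over $\overline{\QQ}$; the paper disposes of this case separately by quoting Heath-Brown's uniform bound for conics, and the base case $\nu=2$ is exactly the uniform affine-conic estimate of Lemma \ref{lem:affquad}, not a routine fact. Finally, the preliminary reduction from ``irreducible over $\QQ$'' to ``absolutely irreducible'' (conjugate linear factors force all integral points onto a codimension-$2$ linear space, contributing $O_\nu(B^{\nu-2})$) must be carried out before anything else, and is absent from your proposal.
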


The implied constant in this result depends only on the choice of $\ve$ and the number $\nu$. This is the  most important  feature  of Theorem \ref{thm:3}, since 
it would be easy to prove a version of the theorem with an implied constant that is allowed to depend on $q$ by first diagonalising $q_0$ and then completing the square where possible. 

\begin{ack}
While working on this paper the authors were  
supported by ERC 
grants
\texttt{306457} and \texttt{239606}, respectively. 
\end{ack}

\section{Counting on symmetric varieties with congruences}\label{s:counting}

\subsection{The main estimate}
In this section we establish an asymptotic counting estimate 
for integral points on symmetric varieties that  satisfy a congruence condition. 
Let $$Y\simeq G/L\subset \mathbb{A}^n$$ be a 
symmetric variety satisfying the hypotheses (i)--(ii) from \S \ref{s:intro} and
\begin{enumerate}
	\item[(iii$^\prime$)] the group $G$ is $\QQ$-simple; and
	\item[(iv$^\prime$)] the group $G(\RR)$ is connected and has no compact factors.
\end{enumerate}
When $G$ is simply connected, $G(\RR)$ is connected, so that 
conditions (iii$^\prime$)--(iv$^\prime$) are weaker than conditions (iii)--(iv).

%Since $Y(\RR)$ is a union of finitely many closed $G(\RR)$-orbits (see, for instance, \cite[Ch.~3]{pr}),
%it follows from (i) that $G(\RR)$ is not compact.

Recalling the definition \eqref{eq:cong} of $\Gamma_\ell$, 
our aim is to estimate the cardinality of the sets $$\{ \x \in \Gamma_\ell \y:\, |\x|\le H\},
$$
as $H\to\infty$, uniformly in $\ell$.

For $\y\in Y(\ZZ)$, we set 
$$
L_\y=\hbox{Stab}_G(\y)\quad\hbox{ and }\quad
B_H(\y)=\{ \x\in G(\RR) \y:\, |\x|\le H\}.
$$
We fix compatible volume forms $m_G$, $m_{L}$, $m_Y$ on $G(\RR)$, $L_\y(\RR)$, $G(\RR)\y$, respectively. Let 
$$
\mathcal{X}_\ell=G(\RR)/\Gamma_\ell\quad\hbox{ and }\quad \mathcal{Z}_{\y,\ell}=L_\y(\RR)/(\Gamma_\ell\cap L_\y(\RR)).
$$
We consider $\mathcal{Z}_{\y,\ell}$ as a submanifold of $\mathcal{X}_\ell$.
We denote by $m_{\mathcal{X}_\ell}$ and $m_{\mathcal{Z}_{\y,\ell}}$ the measures on $\mathcal{X}_\ell$ and $\mathcal{Z}_{\y,\ell}$ induced by the corresponding measure on $G(\RR)$ and $L_\y(\RR)$.
It follows from our assumptions that the spaces $\mathcal{X}_\ell$ and  $\mathcal{Z}_{\y,\ell}$ have finite measures.

With this notation, the main result of this section is the following.

\begin{theorem}\label{th:counting}
	Under assumptions (i)--(ii) and (iii$^\prime$)--(iv$^\prime$), there exists $\rho>0$ such that
	\begin{align*}
		|\Gamma_\ell \y\cap B_H(\y)|=\frac{m_{\mathcal{Z}_{\y,\ell}}(\mathcal{Z}_{\y,\ell})}{m_{\mathcal{X}_\ell}(\mathcal{X}_\ell)}\, m_Y(B_H(\y))
		+O(\ell^{\dim(L)+\dim(G)} m_Y(B_H(\y))^{1-\rho}).
	\end{align*}
	The implied constant in the error term is uniform over $\y\in Y(\ZZ)$ and $\ell\in \NN$.
\end{theorem}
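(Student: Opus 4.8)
The plan is to realise $|\Gamma_\ell\y\cap B_H(\y)|$ as the value at one point of an automorphic function on $\mathcal{X}_\ell=G(\RR)/\Gamma_\ell$, to evaluate its average by unfolding, and to control the difference by effective equidistribution, the latter resting on the uniform spectral gap of Burger--Sarnak \cite{bs} and Clozel \cite{c}. Write $\Lambda_\ell=\Gamma_\ell\cap L_\y(\RR)$, so that the orbit map identifies $\Gamma_\ell\y$ with $\Gamma_\ell/\Lambda_\ell$, and set
\[
F_H(g\Gamma_\ell)=\sum_{\gamma\in\Gamma_\ell/\Lambda_\ell}\mathbf{1}_{B_H(\y)}(g\gamma\y).
\]
As left translation by $\Gamma_\ell$ permutes $\Gamma_\ell/\Lambda_\ell$ and $\Lambda_\ell$ fixes $\y$, this descends to a function on $\mathcal{X}_\ell$ with $F_H(\Gamma_\ell)=|\Gamma_\ell\y\cap B_H(\y)|$. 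Unfolding the sum against $m_{\mathcal{X}_\ell}$ gives $\int_{\mathcal{X}_\ell}F_H\,\d m_{\mathcal{X}_\ell}=\int_{G(\RR)/\Lambda_\ell}\mathbf{1}_{B_H(\y)}(g\y)\,\d g$, and fibering $G(\RR)/\Lambda_\ell\to G(\RR)/L_\y(\RR)\simeq G(\RR)\y$ over the cycle $\mathcal{Z}_{\y,\ell}=L_\y(\RR)/\Lambda_\ell$, together with the compatibility of the fixed volume forms, yields
\[
\int_{\mathcal{X}_\ell}F_H\,\d m_{\mathcal{X}_\ell}=m_{\mathcal{Z}_{\y,\ell}}(\mathcal{Z}_{\y,\ell})\,m_Y(B_H(\y)).
\]
Dividing by $m_{\mathcal{X}_\ell}(\mathcal{X}_\ell)$ already produces the asserted main term, so the whole problem is to bound $F_H(\Gamma_\ell)-m_{\mathcal{X}_\ell}(\mathcal{X}_\ell)^{-1}\int_{\mathcal{X}_\ell}F_H\,\d m_{\mathcal{X}_\ell}$.

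Fix a non-negative smooth bump $\psi_\epsilon$ on $G(\RR)$ of total mass $1$ supported in the $\epsilon$-ball about $e$, with $\epsilon$ below a threshold (depending only on $\Gamma$, using $\Gamma_\ell\subseteq\Gamma$) ensuring that its support injects into every $\mathcal{X}_\ell$; then the induced function on $\mathcal{X}_\ell$ has $L^2$- and Sobolev norms independent of $\ell$, and we write $\psi_\epsilon^{0}=\psi_\epsilon-m_{\mathcal{X}_\ell}(\mathcal{X}_\ell)^{-1}$ for its mean-zero part. Using well-roundedness of the family $\{B_H(\y)\}$ --- that an $\epsilon$-collar of $\partial B_H(\y)$ has $m_Y$-measure $O(\epsilon\,m_Y(B_H(\y)))$, uniformly in $H$ and in the finitely many $G(\RR)$-orbit types of $\y$, which follows from the explicit asymptotics $m_Y(B_H(\y))\asymp H^{a}(\log H)^{b}$ --- one sandwiches $F_H(\Gamma_\ell)$, for a suitable constant $c>0$, between $\langle F_{(1-c\epsilon)H},\psi_\epsilon\rangle_{\mathcal{X}_\ell}$ and $\langle F_{(1-c\epsilon)^{-1}H},\psi_\epsilon\rangle_{\mathcal{X}_\ell}$, reducing matters to estimating $\langle F_{H'},\psi_\epsilon\rangle_{\mathcal{X}_\ell}$ for $H'=(1+O(\epsilon))H$. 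Unfolding as before gives $\langle F_{H'},\psi_\epsilon\rangle_{\mathcal{X}_\ell}=\int_{B_{H'}(\y)}\big(\int_{\mathcal{Z}_{\y,\ell}}\psi_\epsilon(g_\x l)\,\d m_{\mathcal{Z}_{\y,\ell}}(l)\big)\,\d m_Y(\x)$, where $g_\x\in G(\RR)$ is any element with $g_\x\y=\x$; since the inner integral of the constant $m_{\mathcal{X}_\ell}(\mathcal{X}_\ell)^{-1}$ equals $m_{\mathcal{Z}_{\y,\ell}}(\mathcal{Z}_{\y,\ell})/m_{\mathcal{X}_\ell}(\mathcal{X}_\ell)$, the discrepancy becomes
\[
\int_{B_{H'}(\y)}\Big(\int_{\mathcal{Z}_{\y,\ell}}\psi_\epsilon^{0}(g_\x l)\,\d m_{\mathcal{Z}_{\y,\ell}}(l)\Big)\,\d m_Y(\x),
\]
an average over $B_{H'}(\y)$ of periods of the fixed mean-zero function $\psi_\epsilon^{0}$ over the translated cycles $g_\x\mathcal{Z}_{\y,\ell}\subseteq\mathcal{X}_\ell$.

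Each such period is controlled by effective equidistribution of translated cycles: combining the Eskin--McMullen wavefront lemma (to realise a thickening of $g_\x\mathcal{Z}_{\y,\ell}$ in $\mathcal{X}_\ell$ as the $g_\x$-translate of a single fixed thickening of $\mathcal{Z}_{\y,\ell}$) with effective mixing on $\mathcal{X}_\ell$ yields $\big|\int_{\mathcal{Z}_{\y,\ell}}\psi_\epsilon^{0}(g_\x l)\,\d m_{\mathcal{Z}_{\y,\ell}}(l)\big|\ll\ell^{O(1)}\epsilon^{-O(1)}\Xi(g_\x)^{\theta}$, where effective mixing rests on the decay of matrix coefficients $|\langle g\cdot\phi_1,\phi_2\rangle_{\mathcal{X}_\ell}|\ll\Xi(g)^{\theta}\mathcal{S}(\phi_1)\mathcal{S}(\phi_2)$ on $L^2_0(\mathcal{X}_\ell)$ for mean-zero $\phi_i$, with $\theta>0$ and implied constant uniform over $\ell$ --- precisely the uniform spectral gap of Burger--Sarnak \cite{bs} and Clozel \cite{c}; here $\Xi$ is the Harish-Chandra spherical function of $G(\RR)$, $\mathcal{S}$ a fixed Sobolev norm, and the powers of $\ell$ come from $m_{\mathcal{Z}_{\y,\ell}}(\mathcal{Z}_{\y,\ell})\asymp\ell^{\dim(L)}$ and $m_{\mathcal{X}_\ell}(\mathcal{X}_\ell)\asymp\ell^{\dim(G)}$ (indices of principal congruence subgroups). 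Invoking the weighted-volume bound $\int_{B_H(\y)}\Xi(g_\x)^{\theta}\,\d m_Y(\x)\ll m_Y(B_H(\y))^{1-\rho_1}$ for some $\rho_1>0$ (again from the asymptotics of $m_Y(B_H(\y))$ and the Harish-Chandra bound for $\Xi$), the discrepancy is $\ll\ell^{O(1)}\epsilon^{-O(1)}m_Y(B_H(\y))^{1-\rho_1}$. Balancing this against the boundary error $O(\epsilon\,m_Y(B_H(\y)))$, and using a crude bound in the range where $H$ is small relative to $\ell$, produces the stated estimate (with the resulting $\rho>0$ a fixed fraction of $\rho_1$), the exponent $\dim(L)+\dim(G)$ comfortably absorbing the volume factors. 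Finally, uniformity in $\y\in Y(\ZZ)$ is automatic: $Y(\ZZ)$ is a union of finitely many $\Gamma$-orbits, $\Gamma_\ell$ is normal in $\Gamma$, and $B_H(\y)$ depends only on the $G(\RR)$-orbit of $\y$, so replacing $\y$ by $\gamma\y$ ($\gamma\in\Gamma$) conjugates $(L_\y,\Lambda_\ell,\mathcal{Z}_{\y,\ell})$ measure-preservingly while leaving $B_H(\y)$ unchanged.

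The main obstacle will be the passage through effective equidistribution while keeping everything uniform over the congruence tower: the qualitative uniform spectral gap is available from Burger--Sarnak and Clozel, but one needs the quantitative mixing estimate with Sobolev norms and an implied constant insensitive to $\ell$, together with the wavefront-type bookkeeping that converts it into a count on the variety without picking up powers of $H$, and then one must propagate the polynomial $\ell$-dependence through the volumes of $\mathcal{X}_\ell$ and $\mathcal{Z}_{\y,\ell}$. The other ingredients --- the well-roundedness and weighted-volume estimates for $B_H(\y)$, and the unfolding identities --- are standard in the theory of counting on affine symmetric varieties and do not see $\ell$.
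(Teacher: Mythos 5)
Your proposal follows essentially the same route as the paper: the same unfolding of $F_H$ against a bump function on $\mathcal{X}_\ell$, the same sandwich argument via well-roundedness of $B_H(\y)$, and the same reduction to effective equidistribution of the translated cycles $g\mathcal{Z}_{\y,\ell}$ powered by the uniform spectral gap of Burger--Sarnak and Clozel, with the volume bounds $m_{\mathcal{X}_\ell}(\mathcal{X}_\ell)\ll\ell^{\dim(G)}$ and $m_{\mathcal{Z}_{\y,\ell}}(\mathcal{Z}_{\y,\ell})\ll\ell^{\dim(L)}$ supplying the power of $\ell$. The only organizational difference is at the end, where you integrate a pointwise bound $\Xi(g_\x)^{\theta}$ over $B_H(\y)$ while the paper instead splits $B_H$ into $B_R$ and $B_H\setminus B_R$ and uses a trivial bound on the small ball; the period estimate you assert (and rightly flag as the main obstacle) is exactly the paper's Proposition~\ref{p:equid}, whose proof must in addition handle the non-compactness of $\mathcal{Z}_{\y,\ell}$ via a cusp-excursion estimate and the decay of the injectivity radius along the cycle.
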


The proof of Theorem \ref{th:counting} follows the strategy developed by Duke--Rudnick--Sarnak \cite{drs} and Eskin--McMullen \cite{em}. Quantitative estimates in this setting have also been obtained
by Benoist--Oh \cite{bo}.
The main novelty of our result is the uniformity over the congruence subgroups $\Gamma_\ell$,
which is pivotal for our application to power-free values of polynomials.

The following result shows that  Theorem \ref{th:counting} always provides a non-trivial estimate.

\begin{lemma}
	\label{l:inf}
	The space $G(\RR)\y\simeq G(\RR)/L_\y(\RR)$ is not-compact, and we have $m_Y(B_H(\y))\to \infty$
	as $H\to\infty$.
\end{lemma}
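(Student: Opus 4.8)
The plan is to handle the two assertions in turn. Write $o=\y$ for the base point of the orbit $G(\RR)\y$.

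Two preliminary remarks. First, $\dim Y=\dim G-\dim L>0$, because the involution defining $L$ is non-trivial, so the Lie algebra $\mathfrak l=\mathfrak g^{\sigma}$ of $L$ is a proper subalgebra of $\mathfrak g=\operatorname{Lie}(G)$. Secondly, $G(\RR)\y$ is closed in $\RR^n$: since $Y=G/L$ is a single $G$-orbit, the stabiliser of any point of $Y(\RR)$ has dimension $\dim L$, so every $G(\RR)$-orbit in $Y(\RR)$ is open; a given orbit is therefore the complement in $Y(\RR)$ of the union of the remaining (open) orbits, hence closed in $Y(\RR)$, and $Y(\RR)$ is closed in $\RR^n$. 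To see that $G(\RR)\y$ is not compact, suppose it were. By the Borel density theorem the lattice $\Gamma$ is Zariski dense in $G$ (this uses that $G(\RR)$ has no compact factors). Since $\y\in Y(\ZZ)$ and $G(\RR)\y$ is bounded, $\Gamma\y\subseteq\ZZ^n\cap G(\RR)\y$ would be finite, so $\Gamma$ would lie in a finite union of cosets of $L_\y=\hbox{Stab}_G(\y)$; taking Zariski closures, $G$ itself would be a finite union of translates of $L_\y$, which is impossible as $G$ is irreducible and $\dim L_\y=\dim L<\dim G$. Hence $G(\RR)\y$ is non-compact.

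For the volume statement, note that the sets $B_H(\y)$ increase to $G(\RR)\y$ as $H\to\infty$, so $m_Y(B_H(\y))\uparrow m_Y(G(\RR)\y)$ and it suffices to prove $m_Y(G(\RR)\y)=\infty$. Now $G(\RR)\y\simeq G(\RR)/L_\y(\RR)$ is a reductive homogeneous space of the semisimple group $G(\RR)$ — the subgroup $L_\y$ is conjugate over $\overline{\QQ}$ to the symmetric, hence reductive, subgroup $L$ — and it is classical that such a space has finite invariant volume only if it is compact. Concretely: choose, by Mostow's theorem, a Cartan involution $\theta$ of $G(\RR)$ normalising $L_\y(\RR)$; write $\mathfrak g=\mathfrak k\oplus\mathfrak p$ for the associated decomposition and let $\mathfrak q$ be the orthogonal complement of $\mathfrak l_\y=\operatorname{Lie}(L_\y(\RR))$ with respect to $-B(\cdot,\theta\cdot)$, so that $\mathfrak l_\y$ and $\mathfrak q\cong T_o(G(\RR)\y)$ are $\theta$-stable. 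Then $\mathfrak q\cap\mathfrak p\ne 0$: otherwise $\mathfrak p\subseteq\mathfrak l_\y$, and since $G(\RR)$ has no compact factors also $\mathfrak k=[\mathfrak p,\mathfrak p]\subseteq\mathfrak l_\y$, forcing $\mathfrak l_\y=\mathfrak g$ and contradicting $\dim Y>0$. Picking a maximal abelian $\mathfrak a_{\mathfrak q}\subseteq\mathfrak q\cap\mathfrak p$, necessarily of dimension $\ge 1$, the polar (generalised Cartan) decomposition $G(\RR)=K\exp(\overline{\mathfrak a_{\mathfrak q}^{+}})L_\y(\RR)$ for reductive symmetric spaces expresses $m_Y$ in these coordinates with a Jacobian factor that grows at least exponentially as one moves out along the chamber $\overline{\mathfrak a_{\mathfrak q}^{+}}$ (there is at least one positive restricted root of $\mathfrak a_{\mathfrak q}$ in $\mathfrak g$, as $\mathfrak a_{\mathfrak q}$ is non-central); integrating, $m_Y(G(\RR)\y)=\infty$, which is what we wanted.

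I expect the genuine content to lie entirely in this last divergence, $m_Y(G(\RR)\y)=\infty$; everything else is formal. If one wishes to avoid the polar decomposition, one may argue instead as follows: were $m_Y$ finite, the transitive — hence ergodic — action of $G(\RR)$ on $G(\RR)\y$, together with the Howe--Moore property of $G(\RR)$, would force the non-precompact subgroup $L_\y(\RR)$ to act ergodically on $G(\RR)\y$; this is impossible, since $\mathfrak q\cap\mathfrak p\ne 0$ makes the orbit space $L_\y(\RR)\backslash G(\RR)\y$ positive-dimensional. Hence $L_\y(\RR)$ would be compact, and then $m_Y(G(\RR)\y)=\infty$ because $G(\RR)$ has infinite Haar volume — a contradiction.
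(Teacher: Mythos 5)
Your proof is correct, but it reaches the two conclusions by a different route from the paper. For non-compactness, the paper argues intrinsically: taking a Cartan involution $\theta$ commuting with the defining involution $\sigma$, it invokes Berger's description of $G(\RR)/L$ as the vector bundle $K\times_{K\cap L}(\mathfrak{p}\cap\mathfrak{q})$, so compactness would force $\mathfrak{p}\cap\mathfrak{q}=0$, whence $\mathfrak{p}\subset\operatorname{Lie}(L)$ and (absence of compact factors) $L=G(\RR)$, contradicting non-triviality of $\sigma$. You instead use Borel density together with the integrality of the orbit: a compact orbit through $\y\in Y(\ZZ)$ contains only finitely many integer points, so $\Gamma$ sits in finitely many cosets of the stabiliser, contradicting Zariski density. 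Your argument is shorter and more elementary, but it genuinely uses $\y\in Y(\ZZ)$ and the lattice $\Gamma$, whereas the paper's version is a statement about arbitrary real symmetric quotients of $G(\RR)$; note also that you end up reproving the key Lie-algebra fact $\mathfrak{p}\cap\mathfrak{q}\neq 0$ anyway in your volume argument, so little is actually saved. For the divergence of $m_Y(B_H(\y))$, the paper simply cites the precise asymptotic $m_Y(B_H)\sim vH^a(\log H)^b$ with $a>0$ from Gorodnik--Oh--Shah, which it needs later in any case (in the proof of Theorem \ref{th:counting}); your reduction via monotone convergence to $m_Y(G(\RR)\y)=\infty$, established through the Jacobian of the generalised Cartan decomposition, proves only the weaker divergence but is self-contained and rests on the same underlying volume computation. (Your Howe--Moore alternative is the shakiest part — ergodicity of $L_\y(\RR)$ must be contradicted by exhibiting a non-constant continuous invariant function, which you only gesture at — but since it is offered as an aside it does not affect the validity of the main argument.)
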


\begin{proof}
To simplify notation in this proof, we write $L$ for $L_\y(\RR)$.
Let $\theta$ be a Cartan involution of $G(\RR)$ that commutes with $\sigma$,
and $K$ is the corresponding maximal compact subgroup of $G(\RR)$.
Then we have the decompositions
$$
\hbox{Lie}(G(\RR))=\hbox{Lie}(K)\oplus \mathfrak{p}
\quad\hbox{and}\quad \hbox{Lie}(G(\RR))=\hbox{Lie}(L)\oplus \mathfrak{q}
$$
defined by the $(\pm 1)$-eigenspaces of $\theta$ and $\sigma$ respectively. By \cite[\S55]{berg}, $G(\RR)/L$ is diffeomorphic to the vector bundle $K\times_{K\cap L} (\mathfrak{p}\cap \mathfrak{q})$. In particular, the space 
$G(\RR)/L$ can only be compact if $\mathfrak{p}\cap \mathfrak{q}=0$.
We also have the Cartan decomposition 
$
G(\RR)=K\exp(\mathfrak{p}),
$
and its generalisation 
$$
G(\RR)=K\exp(\mathfrak{p}\cap \mathfrak{q})\exp(\mathfrak{p}\cap \hbox{Lie}(L))
$$
(see \cite[Prop.~2.2]{sch}), that define diffeomorphisms
$$
G(\RR)\simeq K\times \mathfrak{p}
\quad\hbox{and}\quad
G(\RR)\simeq K\times (\mathfrak{p}\cap \mathfrak{q})\times (\mathfrak{p}\cap \hbox{Lie}(L)).
$$
Hence, it follows that if $\mathfrak{p}\cap \mathfrak{q}=0$, then 
$\mathfrak{p}\subset \hbox{Lie}(L)$. Since 
$[\hbox{Lie}(K),\mathfrak{p}]\subset \mathfrak{p}$, the Lie algebra generated by $\mathfrak{p}$
is an ideal in $\hbox{Lie}(G(\RR))$ which corresponds to a connected normal subgroup
of $G(\RR)$ contained in $L$. Moreover, it is clear that this subgroup is cocompact.
Since $G(\RR)$ has no compact factors, it follows that $L=G(\RR)$, but the involution $\sigma$
has been assumed to be non-trivial. This contradiction shows that 
$\mathfrak{p}\cap \mathfrak{q}\ne 0$, and the space $G(\RR)/L$ is not compact.
The last assertion follows from explicit volume computations for symmetric spaces
(see \eqref{eq:volume} below).
\end{proof}

Theorem \ref{th:counting} is deduced from the following equidistribution result on the space $\mathcal{X}_\ell$, which will be established in \S \ref{s:corn}.
We denote by $\mu_{\mathcal{X}_\ell}$ and $\mu_{\mathcal{Z}_{\y,\ell}}$ the normalised measures on $\mathcal{X}_\ell$ and $\mathcal{Z}_{\y,\ell}$ respectively.

\begin{prop}\label{p:equid}
	Under assumptions (i)--(ii) and (iii$^\prime$)--(iv$^\prime$), there exist $q\ge 1$ and  $\rho_1>0$ such that for every $\phi\in C_c^\infty(\mathcal{X}_\ell)$, $\y\in Y(\ZZ)$, and $g\in G(\RR)$,
	$$
	\int_{\mathcal{Z}_{\y,\ell}} \phi(gz)\, d\mu_{\mathcal{Z}_{\y,\ell}}(z)=\int_{\mathcal{X}_\ell}\phi\, d\mu_{\mathcal{X}_\ell}+O_\y\left(m_{\mathcal{X}_\ell}(\mathcal{X}_\ell)|g \y|^{-\rho_1} \|\phi\|_{C^q}\right),
	$$
	where $\|\phi\|_{C^q}$ denotes the $C^q$-norm of the function $\phi$.
	The implied constant in the error term is uniform in $\ell$.
\end{prop}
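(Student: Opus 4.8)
The plan is to establish Proposition~\ref{p:equid} by the thickening method of Eskin--McMullen \cite{em} (compare \cite{drs,bo}), executed quantitatively and, crucially, with all implied constants independent of $\ell$. Uniformity in $\ell$ rests on two facts: that $\mathcal{X}_\ell\to\mathcal{X}_1$ and $\mathcal{Z}_{\y,\ell}\to\mathcal{Z}_{\y,1}$ are covering maps, and that the congruence quotients $\mathcal{X}_\ell$ enjoy a spectral gap that is uniform in $\ell$, by Burger--Sarnak \cite{bs} and Clozel \cite{c}. There are three moves: (1) replace the average of $\phi$ over the periodic orbit $\mathcal{Z}_{\y,\ell}$ by its average over a thin transverse tube, thereby turning it into a matrix coefficient of the $G(\RR)$-representation $\pi_\ell$ on $L^2(\mathcal{X}_\ell)$; (2) peel off the projection onto the constants, which produces the main term $\int_{\mathcal{X}_\ell}\phi\,d\mu_{\mathcal{X}_\ell}$; (3) bound the remaining mean-zero matrix coefficient by quantitative, $\ell$-uniform decay estimates, with the rate expressed through $|g\y|$.

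\emph{Thickening.} Fix a bounded ball $B_\epsilon\subset G(\RR)$ about the identity inside a subspace transverse to $\mathrm{Lie}(L_\y(\RR))$, and a smooth probability density $\psi_\epsilon$ on $B_\epsilon$. Since $\mathcal{X}_\ell$ covers $\mathcal{X}_1$ and $\mathcal{Z}_{\y,\ell}$ covers $\mathcal{Z}_{\y,1}$, the injectivity radius of $\mathcal{X}_\ell$ and the normal injectivity radius of $\mathcal{Z}_{\y,\ell}$ are bounded below independently of $\ell$; hence, for $\epsilon$ below an absolute threshold, $(v,z)\mapsto vz$ embeds $B_\epsilon\times\mathcal{Z}_{\y,\ell}$ as a tube about $\mathcal{Z}_{\y,\ell}$ in $\mathcal{X}_\ell$, and
\[
\int_{\mathcal{Z}_{\y,\ell}}\phi(gz)\,d\mu_{\mathcal{Z}_{\y,\ell}}(z)
=\int_{B_\epsilon}\psi_\epsilon(v)\int_{\mathcal{Z}_{\y,\ell}}\phi(gvz)\,d\mu_{\mathcal{Z}_{\y,\ell}}(z)\,dv
+O\bigl(\epsilon\,\|\phi\|_{C^1}\bigr),
\]
the error coming from transporting $v$ across the uniformly bounded transverse direction. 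By compatibility of the volume forms the double integral equals $\langle\pi_\ell(g)\Psi_\epsilon,\phi\rangle_{L^2(\mathcal{X}_\ell,m_{\mathcal{X}_\ell})}$, where $\Psi_\epsilon$ is the density obtained by smearing, in the transverse directions, $\psi_\epsilon$ against the $L_\y(\RR)$-invariant probability measure $\lambda_{\y,\ell}$ supported on $\mathcal{Z}_{\y,\ell}$; thus $\int_{\mathcal{X}_\ell}\Psi_\epsilon\,dm_{\mathcal{X}_\ell}=1$, and because $\lambda_{\y,\ell}$ is normalised and $m_{\mathcal{Z}_{\y,\ell}}(\mathcal{Z}_{\y,\ell})$ is bounded below, the $L^2$-Sobolev norms $\|\Psi_\epsilon\|_{S^q}$ are $O(\epsilon^{-A})$ for some $A=A(\dim G,q)$, uniformly in $\ell$.

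\emph{Mixing with a uniform gap.} Decompose $\Psi_\epsilon=m_{\mathcal{X}_\ell}(\mathcal{X}_\ell)^{-1}\mathbf{1}+\Psi_\epsilon^0$ with $\Psi_\epsilon^0\perp\mathbf{1}$ in $L^2(\mathcal{X}_\ell,m_{\mathcal{X}_\ell})$. The constant part contributes $m_{\mathcal{X}_\ell}(\mathcal{X}_\ell)^{-1}\int_{\mathcal{X}_\ell}\phi\,dm_{\mathcal{X}_\ell}=\int_{\mathcal{X}_\ell}\phi\,d\mu_{\mathcal{X}_\ell}$, the main term. For the rest one invokes decay of matrix coefficients on $L^2_0(\mathcal{X}_\ell)$: by \cite{bs,c} the action of each non-compact simple factor of $G(\RR)$ on $L^2_0(\mathcal{X}_\ell)$ has a spectral gap independent of $\ell$ (property $\tau$), so by the Cowling--Haagerup--Howe bounds the smooth matrix coefficients in these representations are dominated by a fixed power $\Xi^{1/M}$ of the Harish--Chandra function, with $M$ depending only on the gap. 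Since $\lambda_{\y,\ell}$ is $L_\y(\RR)$-invariant as a distribution, the matrix coefficient at $g$ equals that at $gh$ for every $h\in L_\y(\RR)$; running this through the Eskin--McMullen wave front lemma upgrades the plain $\Xi$-decay to decay transverse to $L_\y(\RR)$. That transverse decay is $\ll|g\y|^{-\kappa}$ for some $\kappa>0$: since $Y$ is affine, $Y(\RR)$ is closed in $\RR^n$ and the orbit $G(\RR)\y$, being of full dimension $\dim Y$, is open and hence closed in $Y(\RR)$, so $g\mapsto g\y$ is proper, whence $|g\y|$ large forces $\inf_{h\in L_\y(\RR)}\|\mu(gh)\|$ large for the Cartan projection $\mu$, while $\Xi$ decays exponentially in $\|\mu(\cdot)\|$. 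Collecting,
\[
\bigl|\langle\pi_\ell(g)\Psi_\epsilon^0,\phi\rangle\bigr|
\ll\epsilon^{-A}\,|g\y|^{-\kappa}\,\|\phi\|_{C^q}\,m_{\mathcal{X}_\ell}(\mathcal{X}_\ell)^{1/2}
\le\epsilon^{-A}\,|g\y|^{-\kappa}\,\|\phi\|_{C^q}\,m_{\mathcal{X}_\ell}(\mathcal{X}_\ell),
\]
after converting $\|\phi\|_{S^q}\ll\|\phi\|_{C^q}m_{\mathcal{X}_\ell}(\mathcal{X}_\ell)^{1/2}$ and using $\|\Psi_\epsilon^0\|_{S^q}\le\|\Psi_\epsilon\|_{S^q}\ll\epsilon^{-A}$. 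Balancing this against $O(\epsilon\|\phi\|_{C^1})$ by the choice $\epsilon=|g\y|^{-\kappa/(A+1)}$ yields the proposition with $\rho_1=\kappa/(A+1)>0$ and $q$ the Sobolev order just used, uniformly in $\ell$ (the permitted dependence on $\y$ being, if one wishes, removable through the finitely many $\Gamma$-orbit types).

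\emph{Main obstacle.} The thickening is routine once the $\ell$-uniformity of the injectivity radii is noted; the real work is the quantitative mixing in the third move. One must turn the qualitative property $\tau$ of Burger--Sarnak and Clozel into an explicit, $\ell$-independent exponent $M$ for the decay of matrix coefficients, and then deploy the wave front lemma to re-express this decay through $|g\y|$ rather than through the full Cartan projection --- the step where the noncompactness of $L_\y$ genuinely intervenes, and where one also checks that $G(\RR)$ being a product of real simple factors (as for signature $(2,2)$) does no harm, since growth of $|g\y|$ forces growth of the Cartan projection in at least one factor. Confining the residual $\ell$-dependence to the single volume factor $m_{\mathcal{X}_\ell}(\mathcal{X}_\ell)$ is part of the same accounting.
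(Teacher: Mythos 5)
Your overall strategy --- thicken the orbital integral into a matrix coefficient on $L^2(\mathcal{X}_\ell)$, split off the constants, apply $\ell$-uniform decay of matrix coefficients coming from the Burger--Sarnak/Clozel spectral gap, and finally convert decay in the Cartan projection into decay in $|g\y|$ --- is the paper's strategy. The paper implements the last conversion via the Cartan decomposition $G(\RR)=K\,A\,L(\RR)$ adapted to the symmetric pair, thickening inside a subspace $V$ of the Lie algebra of the non-expanding horospherical subgroup; this is the quantitative form of the wave front lemma you invoke, and note that it must already be used at the thickening stage (to make the transport error $O(\delta\|\phi\|_{C^1})$ uniformly in $g$), not only when bounding the decay.

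The genuine gap is in the thickening step itself. You assert that, because $\mathcal{X}_\ell\to\mathcal{X}_1$ and $\mathcal{Z}_{\y,\ell}\to\mathcal{Z}_{\y,1}$ are coverings, the normal injectivity radius of $\mathcal{Z}_{\y,\ell}$ is bounded below independently of $\ell$, so that a single tube $B_\epsilon\times\mathcal{Z}_{\y,\ell}$ embeds and $\|\Psi_\epsilon\|_{S^q}=O(\epsilon^{-A})$ uniformly. The covering argument only transfers a \emph{pointwise} lower bound from the base; it gives no global bound, and in general there is none: $\mathcal{Z}_{\y,\ell}$ is merely of finite volume and is non-compact whenever $L_\y$ is $\QQ$-isotropic (e.g.\ for the quadrics with $n\ge 4$ that drive the paper), so the orbit enters the cusp of $\mathcal{X}_\ell$, where the injectivity radius degenerates, the tube self-intersects with unbounded multiplicity, and $\|\Psi_\epsilon\|_\infty$ --- hence every Sobolev norm of $\Psi_\epsilon$ --- blows up. Without that norm bound the mixing step yields nothing. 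The paper's proof spends most of its effort exactly here: it truncates the orbit at distance $R$ from a basepoint, bounds the tail by the Kleinbock--Margulis estimate $\mu_{\mathcal{Z}_\ell}(\mathcal{Z}_{\ell,R}^+)\ll e^{-\theta R}$, covers the truncated part by balls of radius $\ve\le\ve_0 e^{-cR}$ (the shrinking is forced by the $e^{cR}$ conjugation distortion needed to keep $O^G_{3\ve}\to O^G_{3\ve}z_i$ injective), builds a partition of unity $\{\psi_i\}$ subordinate to this cover, thickens and applies mixing piece by piece, and only at the end optimises $R$, $\ve$, $\delta$ against $d(a,e)$. Some version of this cusp control is unavoidable and is absent from your proposal; relatedly, your diagnosis of the quantitative mixing as the main obstacle is off --- given the uniform spectral gap, that estimate is quoted from the literature, and the localisation along the non-compact orbit is where the real work lies.
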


We note that 
\begin{equation}
\label{eq:x_l}
m_{\mathcal{X}_\ell}(\mathcal{X}_\ell)\ll |\Gamma:\Gamma_\ell|\le G(\ZZ/\ell\ZZ)\ll \ell^{\dim(G)},
\end{equation}
in Proposition \ref{p:equid}.
Let us recall the definition of the $C^q$-norms.
These norms are defined with respect to 
a fixed basis $D_1,\ldots,D_n$ of the Lie algebra of $G(\RR)$.
For $\phi$ in $C_c^\infty(G(\RR))$
(or $C_c^\infty(\mathcal{X}_\ell)$) we set
\begin{equation}
	\label{eq:sobolev1}
	\|\phi\|_{C^q}=\sum_{D} \|D\phi\|_\infty,
\end{equation}
where the sum is taken over all monomials in $D_i$'s of degree at most $q$,
and $D_i$'s are right-invariant differential operators defined by
\begin{equation}
	\label{eq:sobolev2}
	D_i\phi(x)=\frac{d}{dt} \phi(\exp(tD_i)x)|_{t=0}.
\end{equation}

\begin{proof}[Proof of Theorem \ref{th:counting}]
	We note that by \cite{bh} the set $Y(\ZZ)$ consists of finitely many orbits of the arithmetic group $\Gamma$. 
	Therefore, it suffices to prove the claim of the theorem for $\y=\gamma_0 \y_0$, for a fixed $\y_0\in Y(\ZZ)$, with estimates which are uniform over $\gamma_0\in \Gamma$.
	Since  $B_H(\y)=B_H(\y_0)$
	in this case, in order to simplify notation, we denote this set by $B_H$ in subsequent computations. We also write $L$ for $L_{\y_0}$ and $\mathcal{Z}_\ell$
	for $\mathcal{Z}_{\y_0,\ell}$.
	
	Let 
	$$
	F_H(g)=\sum_{\gamma\in \Gamma_\ell/(\Gamma_\ell\cap L(\RR))} \chi_{B_H}(g\gamma \y_0).
	$$
	We note that this defines a function on $\mathcal{X}_\ell=G(\RR)/\Gamma_\ell$.
	Since $\Gamma_\ell$ is normal in $\Gamma$, we have
$		F_H(\gamma_0) =
		%\sum_{\gamma\in \Gamma_\ell/(\Gamma_\ell\cap \gamma_0 H_{y_0}(\RR)\gamma_0^{-1})} \chi_{B_H}(\gamma \gamma_0 y_0)\\
		%&= \sum_{\gamma\in \Gamma_\ell/(\Gamma_\ell\cap L_{y}(\RR))} \chi_{B_H}(\gamma y)\\
		|\gamma_0\Gamma_\ell  \y_0\cap B_H|= |\Gamma_\ell \y\cap B_H|.
$
	For a real-valued $\phi\in C_c^\infty(\mathcal{X}_\ell)$, we 
	consider the inner product $\left< F_H,\phi \right>_{L^2(m_{\mathcal{X}_\ell})}$
	which can be unfolded as in \cite[p.~151]{drs}:
	\begin{align*}
		\left< F_H,\phi \right>_{L^2(m_{\mathcal{X}_\ell})}
		&=\int_{G(\RR)/\Gamma_\ell} \left(\sum_{\gamma\in \Gamma_\ell/(\Gamma_\ell\cap L(\RR))} \chi_{B_H}(g\gamma \y_0)\right) \phi(g\Gamma_\ell)\, dm_G(g)\\ 
		&=\int_{G(\RR)/(\Gamma_\ell\cap L(\RR))} \chi_{B_H}(g\y_0) \phi(g\Gamma_\ell)\, dm_G(g)\\ 
		&= \int_{G(\RR)/L(\RR)} \chi_{B_H}(g \y_0)
		\left( \int_{\mathcal{Z}_\ell} \phi(gz)\, dm_{\mathcal{Z}_\ell}(z)\right)\, dm_Y(g\y_0)\\
		&= \int_{B_H} 
		\left( \int_{\mathcal{Z}_\ell} \phi(gz)\, dm_{\mathcal{Z}_\ell}(z)\right)\, dm_Y(g\y_0)\\
		&= m_{\mathcal{Z}_\ell}(\mathcal{Z}_\ell) \int_{B_H} 
		\left( \int_{\mathcal{Z}_\ell} \phi(gz)\, d\mu_{\mathcal{Z}_\ell}(z)\right)\, dm_Y(g\y_0).
	\end{align*}
	Let $R\in (0,H)$. By Proposition \ref{p:equid}, when $|g\y_0|\ge R$, we have 
	$$
	\int_{\mathcal{Z}_\ell} \phi(gz)\, d\mu_{\mathcal{Z}_\ell}(z)=
	\int_{\mathcal{X}_\ell}\phi\, d\mu_{\mathcal{X}_\ell}+O(m_{\mathcal{X}_\ell}(\mathcal{X}_\ell)R^{-\rho_1} \|\phi\|_{C^q}).
	$$
	Also, it is clear that
	$$
	\int_{\mathcal{Z}_\ell} \phi(gz)\, d\mu_{\mathcal{Z}_\ell}(z)\ll \|\phi\|_\infty\le \|\phi\|_{C^q}.
	$$
	Hence, it follows that
$	\left< F_H,\phi \right>_{L^2(m_{\mathcal{X}_\ell})}$ is 
	\begin{align*}
		=~& m_{\mathcal{Z}_\ell}(\mathcal{Z}_\ell)\, m_Y(B_H\backslash B_R)\left(\int_{\mathcal{X}_\ell}\phi\, d\mu_{\mathcal{X}_\ell}
		+O(m_{\mathcal{X}_\ell}(\mathcal{X}_\ell)R^{-\rho_1} \|\phi\|_{C^q}) \right)\\
		& +O(m_{\mathcal{Z}_\ell}(\mathcal{Z}_\ell) m_Y(B_R) \|\phi\|_\infty )\\
		=~& m_{\mathcal{Z}_\ell}(\mathcal{Z}_\ell)\, m_Y(B_H)\left (\int_{\mathcal{X}_\ell}\phi\, d\mu_{\mathcal{X}_\ell}\right) \\
		&+O( m_{\mathcal{Z}_\ell}(\mathcal{Z}_\ell)(m_{\mathcal{X}_\ell}(\mathcal{X}_\ell)R^{-\rho_1}m_Y(B_H) +m_Y(B_R)) \|\phi\|_{C^q}) )\\
		=~& \frac{m_{\mathcal{Z}_\ell}(\mathcal{Z}_\ell)}{m_{\mathcal{X}_\ell}(\mathcal{X}_\ell)}\, m_Y(B_H)\left (\int_{\mathcal{X}_\ell}\phi\, dm_{\mathcal{X}_\ell}\right) \\
		&+O( m_{\mathcal{Z}_\ell}(\mathcal{Z}_\ell)(m_{\mathcal{X}_\ell}(\mathcal{X}_\ell) R^{-\rho_1}m_Y(B_H) + m_Y(B_R)) \|\phi\|_{C^q} ).
	\end{align*}
	It follows from \cite[Cor.~6.10]{gos} that
	\begin{equation}
	\label{eq:volume}
	m_Y(B_H)\sim v\, H^a(\log H)^b,\quad\hbox{ as $H\to\infty$},
	\end{equation}
	for some $v,a>0$ and $b\ge 0$. Hence, the last estimate with a suitable choice of the parameter $R$ implies that for some $\rho_2>0$
	\begin{equation}\begin{split}\label{eq:equid}
		\left< F_H,\phi \right>_{L^2(m_{\mathcal{X}_\ell})} =& \frac{m_{\mathcal{Z}_\ell}(\mathcal{Z}_\ell)}{m_{\mathcal{X}_\ell}(\mathcal{X}_\ell)}\, m_Y(B_H)\left (\int_{\mathcal{X}_\ell}\phi\, dm_{\mathcal{X}_\ell}\right)\\
		&+O(m_{\mathcal{Z}_\ell}(\mathcal{Z}_\ell) m_{\mathcal{X}_\ell}(\mathcal{X}_\ell) m_Y(B_H)^{1-\rho_2}\|\phi\|_{C^q}).
\end{split}
	\end{equation}
	
	We apply this estimate to a suitably chosen bump-function $\phi_\ve$ on $\mathcal{X}_\ell$. 
	We denote by $O^G_\ve$ the $\ve$-neighbourhood of identity 
	with respect to a Riemannian metric on $G(\RR)$.
	Let $\Phi_\ve$ be a smooth non-negative function supported on $O_\ve^G$ such that 
	$$
	\int_{G(\RR)} \Phi_\ve\, dm_G=1\quad\hbox{and}\quad \|\Phi_\ve\|_{C^q}\ll \ve^{-\beta},
	$$
	with some $\beta>0$ depending on $\dim(G)$.
	It follows from the definition of the $C^q$-norms
	(cf.\ \eqref{eq:sobolev1}--\eqref{eq:sobolev2}) that the functions $g\mapsto \Phi_\epsilon(gg_0)$, $g_0\in G$, have the same norms as $\Phi_\ve$.
	Let
	$$
	\phi_\ve(g\Gamma_\ell)=\sum_{\gamma\in\Gamma_\ell} \Phi_\ve(g\gamma_0^{-1}\gamma)=\sum_{\gamma\in\Gamma_\ell} \Phi_\ve(g\gamma\gamma_0^{-1}).
	$$
	This defines a function on $\mathcal{X}_\ell=G(\RR)/\Gamma_\ell$ which also satisfies 
	\begin{equation}
	\label{eq:f_e}
	\int_{\mathcal{X}_\ell} \phi_\ve\, dm_{\mathcal{X}_\ell}=1\quad\hbox{and}\quad \|\phi_\ve\|_{C^q}\ll \ve^{-\beta}.
	\end{equation}
	Our goal is to show that $|\Gamma_\ell \y \cap B_H|$
	can be approximated by the inner products $\left< F_H,\phi_\ve \right>_{L^2(m_{\mathcal{X}_\ell})}$.
	We observe that if for some $g\in G(\RR)$, we have 
	\begin{equation}
	\label{eq:ne_zero}
	\phi_\ve(g\Gamma_\ell)\ne 0,
	\end{equation}
	then $g\gamma_0^{-1}\in O^G_\ve \Gamma_\ell$
	and $g\in u \gamma_0 \Gamma_\ell$ for some $u\in O^G_\ve$. For such $g$,
	$$
	F_H(g)=|g\Gamma_\ell \y_0 \cap B_H|=|\gamma_0\Gamma_\ell \y_0 \cap u^{-1}B_H|.
	$$
	For $u\in O^G_\ve$, we have $u=e+O(\ve)$, so that there exists uniform $c>0$ such that
	$
	B_{(1-c\ve)H}\subset u^{-1}B_H\subset B_{(1+c\ve)H}.
	$
	Hence, we deduce that for $g$ satisfying \eqref{eq:ne_zero},
	$$
	|\gamma_0\Gamma_\ell \y_0 \cap B_{(1-c\ve)H}|\le |g\Gamma_\ell \y_0 \cap B_{H}|\le |\gamma_0\Gamma_\ell \y_0 \cap B_{(1+c\ve)H}|.
	$$
	This implies that $F_H(\gamma_0)=|\Gamma_\ell \y \cap B_H|$ satisfies
	$$
	F_{(1+c\ve)^{-1}H}(g) \le F_H(\gamma_0) \le F_{(1-c\ve)^{-1}H}(g).
	$$
	Hence, it follows from \eqref{eq:f_e} that 
	\begin{equation}
		\label{eq:inn}
		\left< F_{(1+c\ve)^{-1}H},\phi_\ve \right>_{L^2(m_{\mathcal{X}_\ell})} \le F_H(\gamma_0) \le \left< F_{(1-c\ve)^{-1}H},\phi_\ve \right>_{L^2(m_{\mathcal{X}_\ell})}.
	\end{equation}
	Applying \eqref{eq:equid}, we conclude that
	\begin{align*}
		F_H(\gamma_0) \le& 
		\frac{m_{\mathcal{Z}_\ell}(\mathcal{Z}_\ell)}{m_{\mathcal{X}_\ell}(\mathcal{X}_\ell)}\, m_Y(B_{(1-c\ve)^{-1} H})\\
		&+O\left(m_{\mathcal{Z}_\ell}(\mathcal{Z}_\ell)m_{\mathcal{X}_\ell}(\mathcal{X}_\ell) m_Y(B_{(1-c\ve)^{-1}H})^{1-\rho_2}\ve^{-\beta}\right).
	\end{align*}
	The volumes of the sets $B_H$ satisfy a regularity property. According to \cite[Appendix]{ems},  
	there exists $c_0>0$ such that for all $\delta\in (0,\delta_0)$ and $H\ge H_0$,
	$$
	m_Y(B_{(1+\delta)H})\le (1+c_0\delta) m_Y(B_{H}).
	$$
	Therefore, it follows that
	\begin{align*}
		F_H(\gamma_0)\le& \frac{m_{\mathcal{Z}_\ell}(\mathcal{Z}_\ell)}{m_{\mathcal{X}_\ell}(\mathcal{X}_\ell)}\,m_Y(B_{H})\\
		&+O\left( \frac{m_{\mathcal{Z}_\ell}(\mathcal{Z}_\ell)}{m_{\mathcal{X}_\ell}(\mathcal{X}_\ell)}\ve  m_Y(B_{H}) +  m_{\mathcal{Z}_\ell}(\mathcal{Z}_\ell) m_{\mathcal{X}_\ell}(\mathcal{X}_\ell) m_Y(B_{H})^{1-\rho_2}\ve^{-\beta}\right).
	\end{align*}
	We recall from \eqref{eq:x_l} that
	$m_{\mathcal{X}_\ell}(\mathcal{X}_\ell)\ll \ell^{\dim(G)}$, and similarly
	\begin{align*}
	m_{\mathcal{Z}_\ell}(\mathcal{Z}_\ell)=m_{L}(L(\RR)/(L(\RR)\cap \Gamma_\ell))&\ll |(\Gamma\cap L):(\Gamma_\ell\cap L)|\\
	&\le |\iota(L)(\ZZ/\ell \ZZ)|\\
	&\ll \ell^{\dim(L)}.
	\end{align*}
	Hence, we obtain
		\begin{align*}
		F_H(\gamma_0)\le& \frac{m_{\mathcal{Z}_\ell}(\mathcal{Z}_\ell)}{m_{\mathcal{X}_\ell}(\mathcal{X}_\ell)}\,m_Y(B_{H})\\
		&+O\left(\ell^{\dim(L)+\dim(G)} (\ve  m_Y(B_{H}) +   m_Y(B_{H})^{1-\rho_2}\ve^{-\beta})\right).
		\end{align*}
	Optimising in $\ve$, we deduce that there exists $\rho>0$ such that
	\begin{align*}
		|\Gamma_\ell \y \cap B_{H}|=F_H(\gamma_0)\le \frac{m_{\mathcal{Z}_\ell}(\mathcal{Z}_\ell)}{m_{\mathcal{X}_\ell}(\mathcal{X}_\ell)}\,m_Y(B_{H})+O(\ell^{\dim(L)+\dim(G)} m_Y(B_{H})^{1-\rho}).
	\end{align*}

	This proves the required upper bound on $|\Gamma_\ell \y \cap B_{H}|$.
	The lower estimate on $|\Gamma_\ell \y \cap B_{H}|$ is proved similarly using the lower bound from \eqref{eq:inn}. 
	This completes proof of  Theorem \ref{th:counting} assuming Proposition \ref{p:equid}.
\end{proof}

\subsection{Proof of the equidistribution result}\label{s:corn}

In this section we prove  Proposition \ref{p:equid}.
	To simplify notation, we write $L$ for $L_\y$ and $\mathcal{Z}_\ell$  for $\mathcal{Z}_{\y,\ell}$.
	We recall the Cartan decomposition 
	$$
	G(\RR)=K\,A\,L(\RR),
	$$
	where $K$ is a compact subgroup compatible with $L(\RR)$, and $A$ is a suitable Cartan subgroup complementary to $L(\RR)$ (see, for instance, \cite[Ch.7]{sch}). For $g\in G(\RR)$,
	we write $g=kah$ with $k\in K$, $a\in A$, and $h\in L(\RR)$. Then
	$$
	\int_{\mathcal{Z}_\ell}\phi(gz)\, d\mu_{\mathcal{Z}_\ell}(z)= \int_{\mathcal{Z}_\ell}\phi_k(az)\, d\mu_{\mathcal{Z}_\ell}(z),
	$$
	where $\phi_k\in C^\infty_c(\mathcal{X}_\ell)$ is given by $\phi_k(x)=\phi(kx)$. 
	Since $K$ is compact, 
	$$
	\|\phi_k\|_{C^q} \ll \|\phi\|_{C^q}\quad\hbox{and}\quad |gy|=|ka\y|\ll |a\y|
	$$
	uniformly for $k\in K$. Hence, the claim of the proposition will follow once we prove it for 
	$g=a\in A$. Moreover, without loss of generality we may assume that $a$
	belongs to a fixed positive Weyl chamber $A^+$ in $A$ for the action of $A$ on the Lie algebra of $G(\RR)$.
	
	In the proof we use parameters $R$, $\ve$, $\eta$ of the form 
	$$
	R=\eta_1 d(a,e),\quad \ve=e^{-\eta_2 d(a,e)},\quad \delta=e^{-\eta_3 d(a,e)},
	$$
	with some $\eta_1,\eta_2,\eta_3>0$ 	that will be specified later.
	We equip the space $\mathcal{X}_\ell$ with an invariant Riemannian metric
	induced from a right-invariant Riemannian metric on $G(\RR)$
	which is bi-invariant with respect to the maximal compact subgroup $K$.
	Fix $z_0\in \mathcal{Z}_\ell$ and set 
	$$
	\mathcal{Z}_{\ell,R}^-=\{z\in \mathcal{Z}_\ell:\, d(z, z_0)< R \}\quad
	\hbox{and}\quad
	\mathcal{Z}_{\ell,R}^+=\{z\in \mathcal{Z}_\ell:\, d(z, z_0)> R \}.
	$$
	It follows from \cite[Sec.~5]{km} that
	\begin{equation}
		\label{eq:cusp}
		m_{\mathcal{Z}_\ell}(\mathcal{Z}_{\ell,R}^+)\ll m_{\mathcal{Z}_\ell}(\mathcal{Z}_{\ell}) e^{-\theta R}\quad\hbox{and}\quad
		\mu_{\mathcal{Z}_\ell}(\mathcal{Z}_{\ell,R}^+)\ll e^{-\theta R},
	\end{equation}
	for some fixed $\theta>0$. 
	
	We refine the open cover $\mathcal{Z}_\ell=\mathcal{Z}_{\ell,R}^-\cup \mathcal{Z}_{\ell,R-1}^+$ further.
	Let $O^G_\ve$ and $O^L_\ve$ denote the $\ve$-neighbourhoods of identity in $G(\RR)$ and $L(\RR)$ respectively.
	Since these neighbourhoods are defined with respect to a invariant metric,
	$(O^G_\ve)^{-1}=O^G_\ve$ and $O^G_{\ve_1} O^G_{\ve_2}\subset O^G_{\ve_1+\ve_2}$,
	and similarly for $O_\ve ^L$.
	Let $\Omega=\{z_i:i\in I\}$ be a maximal subset of $\mathcal{Z}_{\ell,< R}$ such that
	$O^L_{\ve/2}z_i$, $i\in I$, are disjoint. We observe that then 
	\begin{equation}
		\label{eq:cover}
		\mathcal{Z}_{\ell,R}^-\subset \bigcup_{i\in I} O^L_{\ve}z_i.
	\end{equation}
	Indeed, suppose that
	$z\in \mathcal{Z}_{\ell,R}^-$, but $z$ does not belong to the union in \eqref{eq:cover}.
	Then the set $\Omega\cup\{z\}$ satisfies the same disjointness property as $\Omega$.
	Indeed, if  $O^L_{\ve/2}z \cap  O^L_{\ve/2}z_i\ne \emptyset$ for some $i$, then 
	$z\in (O^L_{\ve/2})^{-1}O^L_{\ve/2}z_i\subset O^L_{\ve}z_i$ which is not the case.
	This contradicts maximality of $\Omega$ and proves \eqref{eq:cover}.
	
	For future reference, we prove some basic properties of the set $\Omega$.
	First, we claim that for sufficiently small $\ve$, the map $O^G_{3\ve}\to O^G_{3\ve}z_i$ is injective.
	Let $z_i=h_i\Gamma_\ell$ with $h_i\in L(\RR)$ satisfying $d(h_i,e)<R$.
	If $u_1h_i\Gamma_\ell=u_2h_i\Gamma_\ell$ for some $u_1,u_2\in O^G_{3\ve}$, then for some $\gamma\in \Gamma_\ell$, we obtain 
	$$
	\gamma=h_i^{-1}u_2^{-1}u_1h_i\in h_i^{-1} O^G_{6\ve} h_i\subset O^G_{6e^{c R}\ve},
	$$
	with some fixed $c>0$. Let us choose $\ve\le \ve_0 e^{-cR}$ with sufficiently small $\ve_0>0$. Then it follows from discreteness of $\Gamma_\ell$ that $\gamma=e$, so that $u_1=u_2$.
	Hence, this shows that the map $O^G_{3\ve}\to O^G_{3\ve}z_i$ is injective.
	
	We will also need an upper bound on $|\Omega|$ which is easy to deduce from the disjointness
	property. Since the map $O^L_{\ve/2}\to O^L_{\ve/2}z_i$ is injective,
	$m_{\mathcal{Z}_\ell}(O^L_{\ve/2}z_i)\gg \ve^{d}$ where $d=\dim(L)$.
	This implies that 
	\begin{equation}
	\label{eq:omega}
	|\Omega|\ll m_{\mathcal{Z}_\ell}(\mathcal{Z}_\ell) \ve^{-d}. 
	\end{equation}		
	
	We choose a smooth function $\Psi$ on $L(\RR)$ such that 
	$$
	0\le \Psi\le 1,\;\;\; \hbox{supp}(\Psi)\subset O^L_{2\ve},\;\;\; \Psi=1\hbox{ on } O^L_{\ve},\;\;\; \|\Psi\|_{C^q}\ll \ve^{-\beta},
	$$
	for some $\beta>0$ depending on $\dim(L)$.
	It follows from the definition of $C^q$-norms
	(cf. \eqref{eq:sobolev1}--\eqref{eq:sobolev2}) that the family of functions $h\mapsto \Psi(hh_0)$, $h_0\in L(\RR)$, have the same Sobolev norms.
	Since the map $O^L_{2\ve}\to O^L_{2\ve}z_i$ is injective, we also obtain smooth functions
	$\chi_i$ on $\mathcal{Z}_\ell$ such that 
	$$
	\hbox{supp}(\chi_i)\subset O^L_{2\ve}z_i\quad\hbox{and}\quad
	\|\chi_i\|_{C^q}=\|\Psi\|_{C^q}\ll \ve^{-\beta}.
	$$
	We fix a total ordering on $I$ and set 
	$$
	\psi_i=\chi_i\prod_{j> i} (1-\chi_j)\quad\hbox{and}\quad
	\psi_\infty=1-\sum_i \psi_i=\prod_i(1-\chi_i).
	$$
	Then 
	$
	\hbox{supp}(\psi_i)\subset \hbox{supp}(\chi_i) \subset O^L_{2\ve}z_i,
	$
	and it follows from \eqref{eq:cover} that 
	$$
	\hbox{supp}(\psi_\infty)\subset \mathcal{Z}_{\ell,R-1}^+.
	$$
	It is also clear that 
	\begin{equation}\label{eq:psi}
		0\le \psi_i\le 1\quad\hbox{and}\quad\|\psi_i\|_{C^q}\ll \ve^{-\beta_1}, %\quad\hbox{and}\quad S(\psi_\infty)\ll \ve^{-\beta_1}
	\end{equation}
	with some fixed $\beta_1>0$.

	Let $P$ be the non-expanding horospherical subgroup of $G(\RR)$ corresponding to $A^+$.
	This is the connected Lie subgroup of $G(\RR)$ whose Lie algebra consists of $X$
	such that $\frac{\| \hbox{\small Ad}(a) X \|}{\|X\|}$ is uniformly bounded as $a\in A^+$.
	This property implies, in particular, that for $p$ in a neighbourhood of identity in $P$ and all $a\in A^+$,
	\begin{equation}
		\label{eq:dist}
		d(apa^{-1},e)\ll d(p,e).
	\end{equation}
	We note that since $L(\RR)$ is a symmetric subgroup in $G(\RR)$, it follows that
	$$
	\hbox{Lie}(G(\RR))=\hbox{Lie}(P)+\hbox{Lie}(L(\RR))
	$$
	(see, for instance, \cite[p.~199]{em}).
	In particular, there is a subspace $V$ of $\hbox{Lie}(P)$
	such that 
	\begin{equation}
		\label{eq:lie}
		\hbox{Lie}(G(\RR))=V\oplus \hbox{Lie}(L(\RR)).
	\end{equation}
	Let $O_\ve^V$ denote the $\ve$-neighbourhood of identity in $\exp(V)$.
	It follows from \eqref{eq:lie} that the product map 
	\begin{equation}
		\label{eq:prod}
		\exp(V)\times L(\RR)\to G(\RR)
	\end{equation}
	is a diffeomorphism in
	a neighbourhood of identity. For $g$ in a neighbourhood of identity in $G(\RR)$,
	we write $g={\bf v}(g){\bf h}(g)$ where ${\bf v}$ and ${\bf h}$ are the smooth maps
	realising this diffeomorphism.
	
	We observe that with respect to the decomposition \eqref{eq:prod}, the Haar measure $m_G$ restricted to $O_\ve^V O_{2\ve}^L$
	decomposes as a product $m_V\otimes m'_L$ where $m_V$ is a smooth measure on $O_\ve^V$,
	and $m'_L$ is the restriction of $m_L$ to $O_{2\ve}^L$.
	%Then $O_\ve^V O_\ve^L\subset O_{2\ve}^G$.
	We also note that under the map $h\mapsto h z_i$, the measure $m'_L$ projects to 
	$m_{\mathcal{Z}_\ell}|_{O_{2\ve}^L z_i}$.
	We choose a smooth non-negative function $\sigma$ on $\exp(V)$ such that 
	\begin{equation}
		\label{eq:sigma}
		\int_{\exp(V)} \sigma\, dm_V=1,\;\;\; \hbox{supp}(\sigma)\subset O_\delta^V,\;\;\; \|\sigma\|_{C^q}\ll \delta^{-\beta_2},
	\end{equation}
	with some $\beta_2>0$ depending on $\dim(V)$.
	Then
	\begin{align*}
		\int_{\mathcal{Z}_\ell} \phi(az)\psi_i (z)\, dm_{\mathcal{Z}_\ell}(z)
		&=\int_{\exp(V)\times O_{2\ve}^L} \phi(ah z_i) \sigma(v)\psi_i (hz_i)\, dm_V(v)dm'_{L}(h)\\
		&=\int_{\exp(V)\times O_{2\ve}^L} \phi(ah z_i) \Phi_i(vh)\, dm_V(v)dm'_{L}(h),
	\end{align*}
	where $\Phi_i$ is the smooth function supported on $O_\delta^V O_{2\ve}^L$, with $\delta\le\ve$, defined by
	$$
	\Phi_i(g)=\sigma({\bf v}(g))\tilde \psi_i ({\bf h}(g)),
	$$
	where $\tilde \psi_i (h) = \psi_i (h z_i)$. Then $\|\tilde \psi_i\|_{C^q}=\|\psi_i\|_{C^q}$, and
	it follows from \eqref{eq:psi} and \eqref{eq:sigma} that
	$$
	\| \Phi_i\|_{C^q}\ll \delta^{-\beta_3},
	$$
	with some $\beta_3>0$. By \eqref{eq:dist}, for $v\in O_\delta^V$, 
	\begin{align*}
		|\phi(avz)-\phi(az)|\ll \|\phi\|_{C^1}\, d(ava^{-1},e)\ll \|\phi\|_{C^1} \delta.
	\end{align*}
	Since
	\begin{align*}
	\left|\int_{\exp(V)\times O_\ve^L} \Phi_i(vh)\, dm_V(v)dm'_{L}(h)\right|
	&=
	\left( \int_{\exp(V)} \sigma\, dm_V\right) \left(\int_{O_\ve^L} \tilde \psi_i(h)\,dm'_{L}(h)\right)\\ &\ll \ve^d,
	\end{align*}
	where $d=\dim(L)$,
	we obtain that
	\begin{align*}
		\int_{\mathcal{Z}_\ell} \phi(az)\psi_i (z)\, dm_{\mathcal{Z}_\ell}(z)
		=&\int_{\exp(V)\times O_{2\ve}^L} \phi(avhz_i) \Phi_i(vh)\, dm_V(v)dm'_{L}(h)\\
		&+O(\|\phi\|_{C^1} \delta \ve^d)\\
		=&\int_{O_\delta^V O_{2\ve}^L} \phi(agz_i) \Phi_i(g)\, dm_G(g)
		+O(\|\phi\|_{C^1} \delta\ve^d).
	\end{align*}
	We recall that the map $g\mapsto g z_i$ is injective on $O_{3\ve}^G$.
	Hence, under this map 
	the measure $m_G|_{O_{3\ve}^G}$ projects to the measure $m_{\mathcal{X}_\ell}|_{O_{3\ve}^G z_i}$, and since $O_\delta ^V O_{2\ve}^L\subset O_{3\ve}^G$,
	$\Phi_i$ defines a function $\phi_i$ supported on $O_{3\ve}^G z_i$ such that
	\begin{equation}
	\label{eq:phi_ccc}
	\|\phi_i\|_{C^q}=\|\Phi_i\|_{C^q}\ll \delta^{-\beta_3},
	\end{equation}
	 and 
	\begin{align*}
		\int_{O_{3\ve}^G} \phi(agz_i) \phi_i(gz_i)\, dm_G(g) =
		\int_{\mathcal{X}_\ell} \phi(ax) \phi_i(x)\, dm_{\mathcal{X}_\ell}(x). 
	\end{align*}
	Combining the above estimates, we conclude that
$$
		\int_{\mathcal{Z}_\ell} \phi(az)\phi_i (z)\, dm_{\mathcal{Z}_\ell}(z)
		=\int_{\mathcal{X}_\ell} \phi(ax) \phi_i(x)\, dm_{\mathcal{X}_\ell}(x)+O(\|\phi\|_{C^1} \delta\ve^d).
$$
	
	This formula allows us to use the exponential decay property of matrix coefficients
	for representations of $G(\RR)$ to estimate the original integral.
	It is known from the works \cite{bs,c}, which established bounds towards the generalised 
	Ramanujan conjectures, that the action of each simple factor of $G(\RR)$
	on the congruence quotients $\mathcal{X}_\ell=G(\RR)/\Gamma_\ell$ has the uniform spectral gap property. 
	Namely, the unitary representation of non-compact simple factors of $G(\RR)$ 
	on the orthogonal complement of the constant functions in $L^2(\mathcal{X}_\ell)$
	are uniformly isolated from the trivial representations.
	Then, by \cite[\S3.4]{km}, there exists $\rho>0$ such that 
	\begin{equation}\begin{split}\label{eq:exp_mix}
		\int_{\mathcal{X}_\ell} \phi(ax) \phi_i(x)\, d\mu_{\mathcal{X}_\ell}(x)
		=&
		\left(\int_{\mathcal{X}_\ell} \phi\, d\mu_{\mathcal{X}_\ell}\right)
		\left(\int_{\mathcal{X}_\ell} \phi_i \, d\mu_{\mathcal{X}_\ell}\right)\\
		&+O\left(e^{-\rho d(a,e)}\|\phi\|_{C^q} \|\phi_i\|_{C^q}\right).
	\end{split}\end{equation}
	We note that the exponent $\rho$ is determined by the isolation property of 
	the unitary representation, so that it is independent of $\ell$.

	Since $\sum_i \psi_i +\psi_\infty=1$, we obtain
	\begin{align*}
		\int_{\mathcal{Z}_\ell} \phi(az)\, dm_{\mathcal{Z}_\ell}(z)
		=\sum_i \int_{\mathcal{Z}_\ell} \phi(az)\psi_i(z)\, dm_{\mathcal{Z}_\ell}(z)
		+\int_{\mathcal{Z}_\ell} \phi(az)\psi_\infty(z)\, dm_{\mathcal{Z}_\ell}(z).
	\end{align*}
	The last term can be estimated using the fact that 
	$\hbox{supp}(\psi_\infty)\subset \mathcal{Z}_{\ell,R-1}^+$. This gives
	$$
	\int_{\mathcal{Z}_\ell} \phi(az)\psi_\infty(z)\, dm_{\mathcal{Z}_\ell}(z)\le  m_{\mathcal{Z}_\ell}(\mathcal{Z}_{\ell,R-1}^+) \|\phi\|_\infty
	\ll m_{\mathcal{Z}_\ell}(\mathcal{Z}_\ell) e^{-\theta R} \|\phi\|_\infty,
	$$
	whence
	\begin{align*}
		\int_{\mathcal{Z}_\ell} \phi(az)\, dm_{\mathcal{Z}_\ell}(z)
		=&\sum_i\int_{\mathcal{X}_\ell} \phi(ax) \phi_i(x)\, dm_{\mathcal{X}_\ell}(x)\\
		&+ O\left(|\Omega|\|\phi\|_{C^1}\delta\ve^d + e^{-\theta R} \|\phi\|_\infty\right)\\
		=&m_{\mathcal{X}_\ell}(\mathcal{X}_\ell) \sum_i\int_{\mathcal{X}_\ell} \phi(ax) \phi_i(x)\, d\mu_{\mathcal{X}_\ell}(x)\\
		&+ O\left(|\Omega|\|\phi\|_{C^1}\delta\ve^d + m_{\mathcal{Z}_\ell}(\mathcal{Z}_\ell)e^{-\theta R} \|\phi\|_\infty\right).
	\end{align*}
	Next, we apply \eqref{eq:exp_mix}, combined with estimates \eqref{eq:phi_ccc} and \eqref{eq:omega}, to deduce that
the right hand side is
	\begin{align*}
		= ~& m_{\mathcal{X}_\ell}(\mathcal{X}_\ell) \sum_i
		\left(\int_{\mathcal{X}_\ell} \phi\, d\mu_{\mathcal{X}_\ell}\right)
		\left( \int_{\mathcal{X}_\ell} \phi_i \, d\mu_{\mathcal{X}_\ell}\right)\\
		&+O\left(m_{\mathcal{X}_\ell}(\mathcal{X}_\ell) |\Omega|  e^{-\rho d(a,e)}\|\phi\|_{C^q}\delta^{-\beta_3}+ |\Omega|\|\phi\|_{C^1}\delta\ve^d + m_{\mathcal{Z}_\ell}(\mathcal{Z}_\ell)e^{-\theta R} \|\phi\|_\infty\right)\\
		=~& 
		\left(\int_{\mathcal{X}_\ell} \phi\, d\mu_{\mathcal{X}_\ell}\right)
		\left(\sum_i \int_{\mathcal{X}_\ell} \phi_i \, dm_{\mathcal{X}_\ell}\right)\\
		&+O\left( m_{\mathcal{Z}_\ell}(\mathcal{Z}_\ell) (m_{\mathcal{X}_\ell}(\mathcal{X}_\ell) \ve^{-d} \delta^{-\beta_3} e^{-\rho d(a,e)}+  \delta + e^{-\theta R} )\|\phi\|_{C^q}\right).
	\end{align*}
	Here we used that $m_{\mathcal{X}_\ell}=m_{\mathcal{X}_\ell}(\mathcal{X}_\ell)\mu_{\mathcal{X}_\ell}$.
	Using \eqref{eq:sigma}, the sum above is 
	\begin{align*}
		\sum_i \int_{\mathcal{X}_\ell} \phi_i \, dm_{\mathcal{X}_\ell}
		&=\sum_i\int_{O_{3\ve}^G} \phi_i(gz_i)\, dm_G(g)\\&=
		\sum_i\int_{O_{3\ve}^G} \Phi_i(g)\, dm_G(g)\\
		&=\sum_i\int_{O_\delta^V\times O_{2\ve}^L} \sigma(v)\psi_i(hz_i)\, dm_V(v)dm'_L(h)\\
		&=\sum_i \int_{\mathcal{Z}_\ell} \psi_i\, dm_{\mathcal{Z}_\ell}.
		\end{align*}
Hence, it follows from  \eqref{eq:cusp} that 		
	\begin{align*}
		\sum_i \int_{\mathcal{X}_\ell} \phi_i \, dm_{\mathcal{X}_\ell}
		&=m_{\mathcal{Z}_\ell}(\mathcal{Z}_\ell) \int_{\mathcal{Z}_\ell} (1-\psi_\infty)\, d\mu_{\mathcal{Z}_\ell}\\
		&= m_{\mathcal{Z}_\ell}(\mathcal{Z}_\ell) \left(1+O\left( e^{-\theta R} \right)\right).
	\end{align*}
	Since $m_{\mathcal{Z}_\ell}=m_{\mathcal{Z}_\ell}(\mathcal{Z}_\ell)\mu_{\mathcal{Z}_\ell}$, we conclude that
	\begin{align*}
		\int_{\mathcal{Z}_\ell} \phi(az)\, d\mu_{\mathcal{Z}_\ell}(z)
		=& \int_{\mathcal{X}_\ell} \phi\, d\mu_{\mathcal{X}_\ell}\\
		&+O\left( (m_{\mathcal{X}_\ell}(\mathcal{X}_\ell)  \ve^{-d}\delta^{-\beta_3} e^{-\rho d(a,e)}+ \delta + e^{-\theta R} )\|\phi\|_{C^q}\right).
	\end{align*}
	We recall that this estimate holds under the previously made assumptions:
	$$
	\ve\le \ve_0 e^{-cR}\quad\hbox{and}\quad \delta\le\ve.
	$$
	We take $\ve= \ve_0 e^{-cR}$ and $\delta= \ve_0e^{dcR/(\beta_3+1)} e^{-\rho d(a,e)/(\beta_3+1)}$.
	This gives 
	\begin{align*}
	\int_{\mathcal{Z}_\ell} \phi(az)\, d\mu_{\mathcal{Z}_\ell}(z)
	=& \int_{\mathcal{X}_\ell} \phi\, d\mu_{\mathcal{X}_\ell}\\
	&+O\left( m_{\mathcal{X}_\ell}(\mathcal{X}_\ell) ( e^{dcR/(\beta_3+1)} e^{-\rho d(a,e)/(\beta_3+1)}+ e^{-\theta R} )\|\phi\|_{C^q}\right).
	\end{align*}
	We choose $R=\eta d(a,e)$ with sufficiently small $\eta>0$. Then $\delta\le \ve$, and 
	 we deduce that 
	\begin{align}\label{eq:final}
		\int_{\mathcal{Z}_\ell} \phi(az)\, d\mu_{\mathcal{Z}_\ell}(z)
		= \int_{\mathcal{X}_\ell} \phi\, d\mu_{\mathcal{X}_\ell}
		+O\left( m_{\mathcal{X}_\ell}(\mathcal{X}_\ell) e^{-\rho' d(a,e)} \|\phi\|_{C^q}\right),
	\end{align}
	for some $\rho'>0$. 

	To finish the proof, it remains to compare $d(a,e)$ and $\|a\y\|$.
	First, we note that since $d$ is an invariant Riemannian metric,
	$$
	\|\log a\| \ll d(a,e)\ll \|\log a\|,
	$$
	where $\|\cdot\|$ denotes the norm induced by the Riemannian metric on $\hbox{Lie}(A)$.
	Since the $A$-action on $\RR^n$ is diagonalisable,
	we can write $\y=\sum_i \y_i$ where $\y_i$'s are linearly
	independent eigenvectors of $A$. Then 
	$$
	a\y=\sum_i e^{\lambda_i(\log a)}\y_i
	$$
	for some characters $\lambda_i$ on $\hbox{Lie}(A)$, and
	$$
	|a\y|\ll \exp\left(\max_i \lambda_i(\log a)\right)\le \exp (c \|\log a\|)
	$$
	for some fixed $c>0$. Hence, it follows from \eqref{eq:final} that
	\begin{align*}
		\int_{\mathcal{Z}_\ell} \phi(az)\, d\mu_{\mathcal{Z}_\ell}(z)
		= \int_{\mathcal{X}_\ell} \phi\, d\mu_{\mathcal{X}_\ell}
		+O\left( m_{\mathcal{X}_\ell}(\mathcal{X}_\ell) |a\y|^{-\rho''} \|\phi\|_{C^q}\right),
	\end{align*}
	with $\rho''=c\rho'>0$. This therefore completes the proof of the Proposition \ref{p:equid}.

\begin{rem}\label{r:22}
We proved Theorem \ref{th:counting} under the assumption that the group $G$ is $\QQ$-simple,
but the method of the proof sometimes works without this assumption.
The only place where this assumption was used is the exponential mixing estimate
\eqref{eq:exp_mix}. When $G$ is not $\QQ$-simple, the space $\mathcal{X}_\ell$
has a finite cover $\prod_{i=1}^r \mathcal{X}^{(i)}_\ell$, where $\mathcal{X}^{(i)}_\ell$
are the spaces corresponding to $\QQ$-simple factors of $G$. In this case, 
we can generalise  \eqref{eq:exp_mix} to give
	\begin{align*}
	\int_{\mathcal{X}_\ell} \phi(ax) \psi_i(x)\, d\mu_{\mathcal{X}_\ell}(x)
	=~&
	\left(\int_{\mathcal{X}_\ell} \phi\, d\mu_{\mathcal{X}_\ell}\right)
	\left(\int_{\mathcal{X}_\ell} \psi_i \, d\mu_{\mathcal{X}_\ell}\right)\\
	&+O\left(e^{-\rho D(a)}\|\phi\|_{C^q} \|\psi_i\|_{C^q}\right),
\end{align*}
where $D(a)=\min_i d(a^{(i)},e)$, and $a=a^{(1)}\cdots a^{(r)}$ is the decomposition of $a$
with respect to the $\QQ$-simple factors. Hence, if one shows that for every $a\in A$,
\begin{equation}
\label{eq:D}
D(a)\gg d(a,e),
\end{equation}
then the proof of Theorem \ref{th:counting} can be completed exactly as before.

We are particularly interested in quadric hypersurfaces $\{Q=m\}$ of signature $(2,2)$.
After a suitable real change of variable, this quadratic surface can be reduced to the form
$X_1X_2-X_3X_4=m$ with $m>0$. Then after identifying $\RR^4$ with the space $\hbox{M}_2(\RR)$ of matrices, $Q$ will be given by the determinant, and $G(\RR)\simeq \hbox{SL}_2(\RR)\times \hbox{SL}_2(\RR)$ with the action given 
$$
(g_1, g_2)\cdot X \mapsto g_1Xg_2^{-1}, \quad\quad (g_1,g_2)\in G(\RR),\;\; X\in \hbox{M}_2(\RR).
$$
Then $Y(\RR)\simeq G(\RR)/L(\RR)$, where $L(\RR)$ is the diagonal subgroup of $G(\RR)$.
It is the symmetric subgroup with respect to the involution 
$(g_1, g_2)\mapsto (g_2, g_1)$. In this case the Cartan subgroup complementary to $L(\RR)$
is 
$$
A=\{(b,b^{-1}):b\in B\},
$$
where $B$ denotes the diagonal subgroup of $\hbox{SL}_2(\RR)$.
It is clear that \eqref{eq:D} holds in this case,  so that Theorem \ref{th:counting} holds as well.
\end{rem}

\subsection{Consequences}

Our next goal is to estimate 
$$
\#\{\x\in Y(\ZZ)\cap \mathcal{O}_{\A}:\, |\x|\leq H, ~\x\equiv \bxi \bmod{\ell}\}
$$
for a given orbit $\mathcal{O}_{\A}$ of $G(\A)$ in $Y(\A)$.
To state this result we use a cohomological invariant $\delta:Y(\A)\to \{0,|\hbox{Pic}(L)|\}$ 
introduced by Borovoi and Rudnick in \cite{borovoi}.
This invariant is constant on orbits $\mathcal{O}_{\A}$ of $G(\A)$ in $Y(\A)$
and has the property that 
$$
\delta(\mathcal{O}_{\A})=0\quad \Longleftrightarrow \quad Y(\QQ)\cap \mathcal{O}_{\A}=\emptyset.
$$
We note that $\delta\equiv 1$  when $L$ is semisimple and simply connected. 
In particular, $\delta\equiv 1$ in the case of quadric hypersurfaces with $n\ge 4$.

Let $\mathcal{O}_{\A}$ be an orbit of $G(\A)$ in $Y(\A)$.
This orbit is of the form $\mathcal{O}_{\A}=\mathcal{O}_{\infty}\times\mathcal{O}_{f}$
where $\mathcal{O}_\infty$ is an orbit of $G(\RR)$ in $Y(\RR)$,
and $\mathcal{O}_f$ is an orbit of $G(\A_f)$ in $Y(\A_f)$.
For $\ell\in \NN$ and $\bxi\in Y(\ZZ/\ell\ZZ)$,
we consider a family of open subsets 
$B_f(\bxi,\ell)$ of $Y({\A}_f)$ defined by 
\begin{align*}
B_f(\bxi,\ell)&=\prod_{p<\infty} B_p(\bxi,\ell),
\end{align*}
where
$$
B_p(\bxi,\ell)=\left\{\y\in  Y(\ZZ_p):\, \y \equiv \bxi \bmod{p^{v_p(\ell)}} \right\}.
$$
We also set 
$$
\mathcal{O}_\infty(H)=\{\y\in \mathcal{O}_\infty:\, |\y|\le H\}.
$$

We fix a {\em gauge form} on $Y$; i.e., a nowhere zero regular differential form of top degree.
Since $Y$ is a homogeneous variety of a semisimple group, such a form exists and is
unique up to a scalar multiple. This defines a measure $m_Y$ on $Y(\RR)$,
induced by the gauge form, and 
a measure $m_{Y,f}$ on $Y(\A_f)$.
We refer to \cite[\S1]{borovoi} for a detailed discussion of gauge forms and corresponding measures. When $L$ is semisimple, the measure $m_{Y,f}$ is the product of 
measures $m_{Y,p}$ on $Y(\QQ_p)$ induced by the gauge form
$$
m_{Y,f}= \prod_{p<\infty} m_{Y,p}.
$$
To define $m_{Y,f}$ in general, we need to introduce suitable convergence factors.
Let $\rho_L$ denote the representation of $\hbox{Gal}(\bar \QQ/\QQ)$ on the space $X^*(L)\otimes \QQ$ of characters of $G$,  let $t_L$ be the rank of the group of $\QQ$-characters of $G$, and let
\begin{equation}\label{eq:artin}
L(s,\rho_L)= \prod_{p<\infty} L_p(s,\rho_L)
\end{equation}
be the Artin $L$-function associated to $\rho_L$. We recall that $L(s,\rho_L)$ has a pole of order $t_L$ at $s=1$. The measure $m_{Y,f}$ on $Y(\A_f)$ is defined by
$$
m_{Y,f}= \left(\lim_{s\to 1} (s-1)^{t_L}L(s,\rho_L)\right)\prod_{p<\infty} L_p(1,\rho_L)^{-1} m_{Y,p}.
$$
Assumption (ii) implies that $t_L=0$, so that we also have 
$$
m_{Y,f}= \prod_{p<\infty} m_{Y,p},
$$
but this convergence is only conditional.

For our  next results, we henceforth assume that $Y\simeq G/L$ satisfies the  assumptions  (i)--(iv).

\begin{corollary}\label{cor:orbit} 
	Under assumptions (i)--(iv), there exists $\rho>0$ such that
	for every orbit $\mathcal{O}_{\A}$ of $G(\A)$ in $Y(\A)$, we have
	\begin{align*}
	\#\{\x\in Y(\ZZ)\cap \mathcal{O}_{\A}:\,  |\x|\leq H, ~&\x\equiv \bxi \bmod{\ell}\}\\
	=~&
	\delta(\mathcal{O}_{\A})m_Y(\mathcal{O}_\infty(H))m_{Y,f}(\mathcal{O}_{f}\cap B_f(\bxi,\ell))\\ &+
	O\left(\ell^{\dim(L)+2\dim(G)} m_Y(\mathcal{O}_\infty(H))^{1-\rho}\right).
	\end{align*}
	The implied constant in the error term is uniform 
	 in $\ell$ and $\bxi$.
	 \end{corollary}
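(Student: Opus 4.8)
The plan is to reduce the count to a sum over orbits of the congruence subgroups $\Gamma_\ell$, to apply Theorem~\ref{th:counting} to each such orbit, and then to match the resulting $H$-independent constant with the adelic volume in the statement. First I would dispose of the trivial case $\delta(\mathcal{O}_\A)=0$: then $Y(\QQ)\cap\mathcal{O}_\A=\emptyset$ by the Borovoi--Rudnick property, so $Y(\ZZ)\cap\mathcal{O}_\A=\emptyset$ and both sides vanish. So assume $\delta(\mathcal{O}_\A)\ne0$. By \cite{bh}, the set $Y(\ZZ)\cap\mathcal{O}_\A$ is a union of finitely many $\Gamma$-orbits $\Gamma\y_1,\dots,\Gamma\y_k$, with $k$ depending only on $Y$; writing $\mathcal{O}_\A=\mathcal{O}_\infty\times\mathcal{O}_f$, each $\y_j$ has its archimedean component in $\mathcal{O}_\infty$, whence $B_H(\gamma\y_j)=\{\x\in\mathcal{O}_\infty:|\x|\le H\}=\mathcal{O}_\infty(H)$ for every $j$ and every $\gamma\in\Gamma$.

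Next, since $\Gamma_\ell$ is normal in $\Gamma$, the $\Gamma_\ell$-orbits inside $\Gamma\y_j$ are indexed by the double cosets $\Gamma_\ell\backslash\Gamma/\Gamma_{\y_j}$ with $\Gamma_{\y_j}=\Gamma\cap L_{\y_j}(\RR)$, and each orbit $\Gamma_\ell(\gamma\y_j)$ has a well-defined residue modulo $\ell$. Imposing $\x\equiv\bxi\bmod{\ell}$ therefore retains exactly the orbits with $\gamma\y_j\equiv\bxi\bmod{\ell}$, so the quantity to be estimated equals $\sum_{j=1}^{k}\sum_{\gamma}|\Gamma_\ell(\gamma\y_j)\cap B_H(\y_j)|$, the inner sum running over double-coset representatives with the prescribed residue. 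Crucially, the total number of orbits occurring here is at most $k|\Gamma:\Gamma_\ell|\le k|G(\ZZ/\ell\ZZ)|\ll\ell^{\dim(G)}$. Applying Theorem~\ref{th:counting} (with the volume forms there normalised so that $m_Y$ is the gauge-form measure) to each $\gamma\y_j\in Y(\ZZ)$ produces, per orbit, the main term $\frac{m_{\mathcal{Z}_{\gamma\y_j,\ell}}(\mathcal{Z}_{\gamma\y_j,\ell})}{m_{\mathcal{X}_\ell}(\mathcal{X}_\ell)}\,m_Y(\mathcal{O}_\infty(H))$ together with an error $O(\ell^{\dim(L)+\dim(G)}m_Y(\mathcal{O}_\infty(H))^{1-\rho})$, with constant uniform in $\ell$ and $\bxi$; summing over the $\ll\ell^{\dim(G)}$ orbits, the errors aggregate to $O(\ell^{\dim(L)+2\dim(G)}m_Y(\mathcal{O}_\infty(H))^{1-\rho})$, which is exactly the error term in the corollary.

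It then remains to identify the $H$-independent constant $c(\mathcal{O}_\A,\bxi,\ell):=\sum_{j}\sum_{\gamma}m_{\mathcal{Z}_{\gamma\y_j,\ell}}(\mathcal{Z}_{\gamma\y_j,\ell})/m_{\mathcal{X}_\ell}(\mathcal{X}_\ell)$ with $\delta(\mathcal{O}_\A)\,m_{Y,f}(\mathcal{O}_f\cap B_f(\bxi,\ell))$. This is a purely measure-theoretic identity, and I would extract it from the Tamagawa/gauge-form formalism of Borovoi--Rudnick \cite{borovoi}: because $G$ is simply connected and $\QQ$-simple with $G(\RR)$ noncompact it satisfies strong approximation off the real place, which lets the finite-adelic volume be computed place by place, and together with $m_{Y,f}=\prod_{p<\infty}m_{Y,p}$ (valid here, if only conditionally, since assumption~(ii) trivialises the convergence factors) one checks that each ratio $m_{\mathcal{Z}_{\gamma\y_j,\ell}}(\mathcal{Z}_{\gamma\y_j,\ell})/m_{\mathcal{X}_\ell}(\mathcal{X}_\ell)$ is the product over $p\mid\ell$ of a local density of $\mathcal{O}_p\cap B_p(\bxi,\ell)$ times a fixed (in $\ell$) adelic volume; summing over the $\Gamma$-orbits $\y_j$ lying in $\mathcal{O}_\A$ and over the double cosets reassembles $m_{Y,f}(\mathcal{O}_f\cap B_f(\bxi,\ell))$, with an overall multiplicity — recording how the rational points of $\mathcal{O}_\A$ distribute among $G(\QQ)$-orbits — equal to the Borovoi--Rudnick invariant $\delta(\mathcal{O}_\A)$ (which collapses to $1$ when $L$ is semisimple and simply connected). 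I expect this last identity — pinning down the exact constant of proportionality between the sum of Haar-volume ratios and the adelic integral, and in particular recognising the multiplicity as $\delta(\mathcal{O}_\A)$ — to be the main obstacle; everything else is bookkeeping layered on top of Theorem~\ref{th:counting}.
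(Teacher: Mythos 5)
Your proposal follows essentially the same route as the paper: decompose the set into $\Gamma_\ell$-orbits, apply Theorem~\ref{th:counting} to each, bound the number of orbits by $O(|\Gamma:\Gamma_\ell|)=O(\ell^{\dim(G)})$ to aggregate the error into $O(\ell^{\dim(L)+2\dim(G)}m_Y(\mathcal{O}_\infty(H))^{1-\rho})$, and identify the sum of the weights $m_{\mathcal{Z}_{\y,\ell}}(\mathcal{Z}_{\y,\ell})/m_{\mathcal{X}_\ell}(\mathcal{X}_\ell)$ with $\delta(\mathcal{O}_{\A})\,m_{Y,f}(\mathcal{O}_{f}\cap B_f(\bxi,\ell))$. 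The identity you single out as the main obstacle is exactly \cite[Thm.~4.2]{borovoi}, which the paper cites directly after verifying its hypotheses, namely that $L$ is reductive (\cite[Thm.~3.5]{bh}) and connected (\cite[\S 8]{stein}, using that $G$ is simply connected and $L$ symmetric).
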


\begin{proof}
	We introduce compact open subgroups
	$$
	K_f(\ell)=\left\{g\in \prod_{p<\infty} G(\ZZ_p):\, \iota(g_p) \equiv id \bmod{p^{v_p(\ell)}}\right\}
	$$
	of $G(\A_f)$ and set 
	$$
	K(\ell)=G(\RR)\times K_f(\ell).
	$$ 
	Then $\Gamma_\ell=G(\QQ)\cap K(\ell)$ are precisely the congruence subgroups defined in \eqref{eq:cong}. The group $\Gamma_\ell$ acts on $Y(\QQ)\cap B(\bxi,\ell)$.
	Given an orbit $\mathcal{O}$ of $\Gamma_\ell$ in $Y(\QQ)\cap B(\bxi,\ell)$,
	following \cite[\S4]{borovoi} we define its {\em weight} $w(\mathcal{O})$ as follows.
	For $\y\in\mathcal{O}$, we fix gauge forms on $G$ and $L_\y$ 
	which are compatible with the chosen gauge form on $Y$.
	These forms define the corresponding measures $m_G$ on $G(\RR)$ and $m_{L_\y}$ on $L_\y(\RR)$
	which also induce measures 
	$m_{\mathcal{X}_\ell}$ and $m_{\mathcal{Z}_{\y,\ell}}$ on 
	$\mathcal{X}_\ell=G(\RR)/\Gamma_\ell$ and $\mathcal{Z}_{\y,\ell}=L_\y(\RR)/(\Gamma_\ell\cap L_\y(\RR))$.
	The weight of the orbit $\mathcal{O}$ is defined by 
	$$
	w(\mathcal{O})=\frac{m_{\mathcal{Z}_{\y,\ell}}(\mathcal{Z}_{\y,\ell})}{m_{\mathcal{X}_\ell}(\mathcal{X}_\ell)}.
	$$
	One can check that this definition is independent of the choice of $\y$.
	Using the new notation, Theorem \ref{th:counting} can be restated as follows:
	\begin{equation}
	\label{eq:orbits}
	|\mathcal{O}\cap \mathcal{O}_\infty(H)| =w(\mathcal{O}) m_Y(\mathcal{O}_\infty(H))
	+O(\ell^{\dim(L)+\dim(G)} m_Y(\mathcal{O}_\infty(H))^{1-\rho}).
	\end{equation}
	Let $B=\mathcal{O}_\infty\times (\mathcal{O}_{f}\cap B_f(\bxi,\ell))$.
	Then 
	$$
	Y(\QQ)\cap B=\{\x\in Y(\ZZ)\cap \mathcal{O}_{\A}:\, \x\equiv \bxi \bmod{\ell}\}.
	$$
	We note that $L$ is reductive by \cite[Thm.~3.5]{bh} and, in particular,
	unimodular. Since $G$ is simply connected and $L$
	is symmetric, it follows from \cite[\S 8]{stein} that $L$ is connected. 
	Hence, all the assumptions of \cite[\S4]{borovoi} are satisfied, and
	according to \cite[Thm.~4.2]{borovoi},
	$$
	\sum_{\mathcal{O}\subset B} w(\mathcal{O})=\delta(\mathcal{O}_{\A})m_{Y,f}(\mathcal{O}_{f}\cap B_f(\bxi,\ell)),
	$$
	where the sum is taken over the orbits $\mathcal{O}$ of $\Gamma_\ell$ contained in $B$.
	Hence, summing \eqref{eq:orbits} over these orbits, we deduce the corollary.
	We note that the number orbits is at most $O(|\Gamma:\Gamma_\ell|)=O(\ell^{\dim(G)})$
	which contributes an additional factor to the error term.
\end{proof}

Finally, we deduce an estimate for the number of 
$\x\in Y(\ZZ)$ with $|\x|\leq H$ and $\x\equiv \bxi \bmod{\ell}.$
Let 
$$
Y(\RR)_H=\{\x\in Y(\RR): |\x|\le H\}.
$$
Since the set $Y(\ZZ)$ consist of finitely many  orbits of $\Gamma$, by \cite{bh}, 
we can sum the estimates from Corollary \ref{cor:orbit} to conclude as follows.

\begin{corollary}\label{cor:variety} 
	There exists $\rho>0$ such that
	\begin{align*}
	\#\{\x\in Y(\ZZ):&\, |\x|\leq H, ~\x\equiv \bxi \bmod{\ell}\}\\
	=~&
	\int_{Y(\RR)_H\times B_f(\bxi,\ell)} \delta \, d(m_Y\otimes m_{Y,f})
	+ O\left(\ell^{\dim(L)+2\dim(G)} V(H)^{1-\rho}\right),
	\end{align*}
	where 
	$$
	V(H)=\max_{\mathcal{O}_\infty\subset Y(\RR)} m_Y(\mathcal{O}_\infty(H))
	$$
	and where $\mathcal{O}_\infty$ runs over orbits of $G(\RR)$ in $Y(\RR)$.
	The implied constant in the error term is uniform on $\bxi$ and $\ell$.
	If, in addition, $L$ is assumed to be semisimple and simply connected, then
	\begin{align*}
	\int_{Y(\RR)_H\times B_f(\bxi,\ell)} \delta \, d(m_Y\otimes m_{Y,f})=
	m_Y(Y(\RR)_H)m_{Y,f}(B_f(\bxi,\ell)).
	\end{align*}
\end{corollary}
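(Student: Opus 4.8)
The plan is to obtain Corollary~\ref{cor:variety} by summing the estimate of Corollary~\ref{cor:orbit} over the finitely many $G(\A)$-orbits in $Y(\A)$ that meet the counting region, and then to recognise the resulting sum of main terms as the integral of $\delta$ over $Y(\RR)_H\times B_f(\bxi,\ell)$.

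First I would observe that $Y(\ZZ)\subset Y(\RR)\times\prod_{p<\infty}Y(\ZZ_p)$, and that this set meets only finitely many orbits $\mathcal{O}_{\A}$ of $G(\A)$ acting on $Y(\A)$. Indeed, $Y(\RR)$ is a finite union of (necessarily open) $G(\RR)$-orbits, while at the finite places the restricted-product description \eqref{eq:aad}, the fact that $Y(\ZZ_p)$ lies in a single $G(\QQ_p)$-orbit for all but finitely many $p$, and the compactness of $\prod_p Y(\ZZ_p)$ together leave only finitely many possibilities. Let $\mathcal{O}_{\A}^{(1)},\dots,\mathcal{O}_{\A}^{(s)}$ be the orbits meeting $Y(\RR)\times\prod_{p<\infty}Y(\ZZ_p)$, so that $s$ depends only on the fixed variety $Y$. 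Since distinct orbits are disjoint, both $Y(\ZZ)$ and the region $Y(\RR)_H\times B_f(\bxi,\ell)$ decompose along this list; in particular
\[
\#\{\x\in Y(\ZZ):\,|\x|\le H,~\x\equiv\bxi\bmod{\ell}\}
=\sum_{j=1}^{s}\#\{\x\in Y(\ZZ)\cap\mathcal{O}_{\A}^{(j)}:\,|\x|\le H,~\x\equiv\bxi\bmod{\ell}\}.
\]

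Next I would apply Corollary~\ref{cor:orbit} to each summand. Writing $\mathcal{O}_{\A}^{(j)}=\mathcal{O}_\infty^{(j)}\times\mathcal{O}_f^{(j)}$, the $j$-th term equals $\delta(\mathcal{O}_{\A}^{(j)})\,m_Y(\mathcal{O}_\infty^{(j)}(H))\,m_{Y,f}(\mathcal{O}_f^{(j)}\cap B_f(\bxi,\ell))$ up to an error $O(\ell^{\dim(L)+2\dim(G)}m_Y(\mathcal{O}_\infty^{(j)}(H))^{1-\rho})$ with implied constant uniform in $\ell$ and $\bxi$. Since $s$ is bounded in terms of $Y$ alone and $m_Y(\mathcal{O}_\infty^{(j)}(H))\le V(H)$, the errors sum to $O(\ell^{\dim(L)+2\dim(G)}V(H)^{1-\rho})$. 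For the main terms, I would use that $\delta$ is locally constant and constant on each $\mathcal{O}_{\A}^{(j)}$, that $(Y(\RR)_H\times B_f(\bxi,\ell))\cap\mathcal{O}_{\A}^{(j)}=\mathcal{O}_\infty^{(j)}(H)\times(\mathcal{O}_f^{(j)}\cap B_f(\bxi,\ell))$, and the product form of $m_Y\otimes m_{Y,f}$, to obtain
\[
\int_{Y(\RR)_H\times B_f(\bxi,\ell)}\delta\,d(m_Y\otimes m_{Y,f})
=\sum_{j=1}^{s}\delta(\mathcal{O}_{\A}^{(j)})\,m_Y(\mathcal{O}_\infty^{(j)}(H))\,m_{Y,f}(\mathcal{O}_f^{(j)}\cap B_f(\bxi,\ell)),
\]
which is exactly the sum of the main terms. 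This yields the first assertion. For the last statement, when $L$ is semisimple and simply connected one has $\delta\equiv1$ on $Y(\A)$, as recorded just before Corollary~\ref{cor:orbit}, so the integral collapses by Fubini to $(m_Y\otimes m_{Y,f})(Y(\RR)_H\times B_f(\bxi,\ell))=m_Y(Y(\RR)_H)\,m_{Y,f}(B_f(\bxi,\ell))$, using that $m_{Y,f}=\prod_p m_{Y,p}$ is a genuine product measure in this case.

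The argument is essentially bookkeeping on top of Corollary~\ref{cor:orbit}, and the only points requiring a little care are: (a) checking that the number $s$ of relevant orbits is genuinely absorbed into the implied constant uniformly in $H$, $\ell$ and $\bxi$; and (b) handling orbits $\mathcal{O}_{\A}^{(j)}$ that meet $Y(\RR)_H\times B_f(\bxi,\ell)$ but contain no integral, or even no rational, point—for any such orbit either $\delta=0$, so it contributes to neither side of the identity, or the count vanishes and Corollary~\ref{cor:orbit} still holds for it through the error term, so the matching goes through regardless.
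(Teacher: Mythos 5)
Your proposal is correct and follows essentially the same route as the paper, which simply notes that $Y(\ZZ)$ meets only finitely many adelic orbits and sums the estimates of Corollary \ref{cor:orbit} over them; your explicit bookkeeping of the main terms as the integral of $\delta$, the absorption of the finitely many error terms into $V(H)^{1-\rho}$, and the reduction $\delta\equiv 1$ in the semisimple simply connected case are exactly what is intended. The only cosmetic difference is that you justify finiteness of the relevant orbits via openness and compactness, whereas the paper cites the finiteness of the number of $\Gamma$-orbits in $Y(\ZZ)$ from Borel--Harish-Chandra; both are valid.
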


\section{Small moduli and the proof of Theorems \ref{t:symmetric} and \ref{t:symmetric2}}\label{s:small}

\subsection{Small moduli}

Let $Y\simeq G/L \subset \AA^n$ be a symmetric variety satisfying (i)--(iv)
and let  $r\geq 2$. In \S\S 3.1--3.2 we additionally assume that $L$ is semisimple and simply connected
and that the smoothness assumption \eqref{eq:smooth} holds.
The case when $L$ is not a semisimple simply connected group will be discussed in \S 3.3. 

We recall the expression \eqref{eq:spud} 
for $N_r(Y,f;H)$.
The goal of this section is to estimate the contribution from small moduli
\begin{equation}
\label{eq:n1}
N^{(1)}(H)
=
\hspace{-0.1cm}
\sum_{k\leq H^\Delta} 
\hspace{-0.2cm}
\mu(k) 
\card \{\x\in Y(\ZZ): |\x|\leq H, ~ 0\neq f(\x)\equiv 0\bmod{k^r}
\},
\end{equation}
as $H\to\infty$,
for given   $\Delta>0$.  
We shall need to separate  the contribution from $\x$ such that $f(\x)=0$. Accordingly, we write
\begin{align*}
N^{(1)}(H)
=~&
\sum_{k\leq H^\Delta} 
\mu(k) 
\card \{\x\in Y(\ZZ): |\x|\leq H, ~ f(\x)\equiv 0\bmod{k^r}
\}\\ &+ O\left(H^{\Delta}E(f;H)\right),
\end{align*}
where for any $g\in \ZZ[X_1,\dots,X_n]$, we set 
\begin{equation}\label{eq:E(gH)}
E(g;H)=\card \{\x\in Y(\ZZ): |\x|\leq H, ~ g(\x)=0\}.
\end{equation}
Breaking  the first cardinality into congruence classes modulo $k^r$, we conclude that
$$
N^{(1)}(H)
=\sum_{k\leq H^\Delta} \mu(k) \sum_{\substack{\bxi\in Y(\ZZ/k^r\ZZ)\\
f(\bxi)\equiv 0\bmod{k^r}}}
V_{k^r}(H;\bxi) +O\left(H^{\Delta} E(f;H)\right),
$$
where 
for any  $\ell\in \NN$ and  $\bxi\in Y(\ZZ/\ell\ZZ)$, we put
$$
V_\ell(H;\bxi)=\card \{\x\in Y(\ZZ): |\x|\leq H, ~ \x\equiv \bxi \bmod{\ell}
\}.
$$
We now shift our attention to estimating $V_\ell(H;\bxi)$, as $H\to\infty$.

\begin{prop}\label{p:main'}
There exists $\delta>0$ such that 
$$
V_\ell(H;\bxi)=\mu_\infty(Y;H) \prod_{p<\infty} \hat \mu_p(Y;\bxi;\ell)  +O(\ell^{\dim(L)+2\dim(G)} \mu_\infty(Y;H)^{1-\delta}),
$$
where $\mu_\infty(Y;H)$ is defined in  \eqref{eq:real} and 
$$
\hat \mu_p(Y;\bxi,\ell)=\lim_{t\rightarrow \infty} p^{-t\dim(Y)}\#\{\x\in Y(\ZZ/p^t\ZZ):  \x\equiv \bxi \bmod{p^{v_p(\ell)}}\}.
$$
The implied constant in this estimate depends only on $Y$ and is independent of $\ell$ and $\bxi$.
\end{prop}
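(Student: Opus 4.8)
The plan is to deduce Proposition~\ref{p:main'} directly from Corollary~\ref{cor:variety}, the substance of the argument being the identification of the adelic gauge‑form measures appearing there with the explicitly defined densities $\mu_\infty(Y;H)$ and $\hat\mu_p(Y;\bxi,\ell)$. Throughout \S\S3.1--3.2 the group $L$ is semisimple and simply connected, so $\delta\equiv 1$ on $Y(\A)$ and the second assertion of Corollary~\ref{cor:variety} applies, giving for some $\rho>0$
\begin{equation*}
V_\ell(H;\bxi)=m_Y(Y(\RR)_H)\,m_{Y,f}(B_f(\bxi,\ell))+O\bigl(\ell^{\dim(L)+2\dim(G)}V(H)^{1-\rho}\bigr),
\end{equation*}
with implied constant depending only on $Y$ and uniform in $\ell$ and $\bxi$, where $V(H)=\max_{\mathcal{O}_\infty}m_Y(\mathcal{O}_\infty(H))$.

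First I would pin down the gauge form on $Y$. It is unique up to a scalar, and by the product formula the scalar does not affect the adelic product $m_Y\otimes m_{Y,f}$, so I am free to take the Leray form attached to the defining system $f_1,\dots,f_\ell$: by the smoothness hypothesis \eqref{eq:smooth}, on each chart where some $(n-\dim(Y))\times(n-\dim(Y))$ minor of $(\partial f_i/\partial X_j)$ is invertible there is a nowhere‑vanishing top form $\omega_Y$ determined by $df_{i_1}\wedge\dots\wedge df_{i_{n-\dim(Y)}}\wedge\omega_Y=dX_1\wedge\dots\wedge dX_n$, and these local forms glue to a global gauge form. With this choice a routine application of the implicit function theorem identifies the archimedean measure $m_Y$ on $Y(\RR)$ with the Gelfand--Leray density $\lim_{\epsilon\to0}\epsilon^{-\ell}\int_{|f_1|,\dots,|f_\ell|<\epsilon/2}(\,\cdot\,)\,d\x$, whence $m_Y(Y(\RR)_H)=\mu_\infty(Y;H)$ as in \eqref{eq:real}. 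Carrying out the analogous computation over $\QQ_p$ — via Hensel's lemma for all but the finitely many primes dividing a suitable resultant, and a direct count for the rest — yields $m_{Y,p}(B_p(\bxi,\ell))=\lim_{t\to\infty}p^{-t\dim(Y)}\#\{\x\in Y(\ZZ/p^t\ZZ):\x\equiv\bxi\bmod{p^{v_p(\ell)}}\}=\hat\mu_p(Y;\bxi,\ell)$. Since $L$ is semisimple, the product $m_{Y,f}=\prod_{p<\infty}m_{Y,p}$ converges absolutely, so $m_{Y,f}(B_f(\bxi,\ell))=\prod_{p<\infty}\hat\mu_p(Y;\bxi,\ell)$ and the main term takes the asserted shape.

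Finally, to match the error term it suffices to observe that $V(H)\le\mu_\infty(Y;H)$: since $G$ is simply connected $G(\RR)$ is connected, so the orbits $\mathcal{O}_\infty$ are precisely the finitely many connected components of $Y(\RR)$ and $\mu_\infty(Y;H)=\sum_{\mathcal{O}_\infty}m_Y(\mathcal{O}_\infty(H))$ is a sum of non‑negative terms one of which equals $V(H)$. After shrinking $\rho$ so that $\rho\le1$ (which only weakens Corollary~\ref{cor:variety}), we get $V(H)^{1-\rho}\le\mu_\infty(Y;H)^{1-\rho}$ for all large $H$, where both quantities are positive by Lemma~\ref{l:inf}; taking $\delta=\rho$ concludes. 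The one genuinely technical ingredient is the local density identification of the previous paragraph; this is the classical Siegel--Tamagawa computation, and beyond \eqref{eq:smooth} its only nontrivial input is the stabilisation of the $p$‑adic limits, which follows from smoothness of $Y$ over $\ZZ_p$ for almost all $p$ together with Hensel's lemma.
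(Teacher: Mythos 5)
Your proposal is correct and follows the same route as the paper: apply Corollary~\ref{cor:variety} (with the second assertion available since $L$ is semisimple and simply connected) and identify $m_Y(Y(\RR)_H)$ with $\mu_\infty(Y;H)$ and $m_{Y,p}(B_p(\bxi,\ell))$ with $\hat\mu_p(Y;\bxi,\ell)$. The only difference is that you sketch the Leray/Siegel--Tamagawa computation behind these identifications, whereas the paper simply cites \cite[Lemma~1.8.2]{borovoi}; your handling of the passage from $V(H)^{1-\rho}$ to $\mu_\infty(Y;H)^{1-\delta}$ is a correct filling-in of a detail the paper leaves implicit.
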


\begin{proof}
	This result is deduced from our work in  \S\ref{s:counting}.
	While there we stated the estimates in terms of the measures $m_Y$ and $m_{Y,f}=\prod_{p<\infty} m_{Y,p}$, but they can also be interpreted using local densities. 
	By \cite[Lemma~1.8.2]{borovoi},
	$$
	m_Y(Y(\RR)_H)=\mu_\infty(Y;H).
	$$
	Also, the proof of \cite[Lemma~1.8.2]{borovoi} gives 
	\begin{equation}
	\label{eq:U}
	m_{Y,p}(U)=\lim_{t\rightarrow \infty} p^{-t\dim(Y)}\#(U\;\hbox{mod}\; p^t),
	\end{equation}
	for every open $U\subset Y(\ZZ_p)$.
	In particular,
	$$
	m_{Y,p}(B_p(\bxi,\ell))=\hat \mu_p(Y;\bxi,\ell).
	$$
	The result now 
	follows from Corollary \ref{cor:variety}.
	%Now (i) follows from the second part of Corollary \ref{cor:variety}
	%and (ii) follows from Corollary \ref{cor:orbit}. Note that the assumptions are 
	%satisfied by Remarks \ref{r:quad} and~\ref{r:quad2}, with $\dim(G)=n(n-1)/2$ and $\dim(L)=(n-1)(n-2)/2$.
\end{proof}

Next,  we claim that 
\begin{equation}\label{eq:deck}
\prod_{p<\infty} \hat \mu_p(Y;\bxi;\ell) \ll \frac{1}{\ell^{\dim(Y)}},
\end{equation}
for any $\ell\in \NN$. 
Let $p\mid \ell$ and let $\mu=v_p(\ell)$. 
Recall that $Y$ is non-singular and let $p$ be a prime of good reduction for $Y$.
We set
$$
N(p^t)=\#\{\x\in Y(\ZZ/p^t\ZZ):  \x\equiv \bxi \bmod{p^{v_p(\ell)}}\}.
$$
It follows from Hensel's lemma that $N(p^{t+1})=p^{\dim(Y)}N(p^t)$ for any $t\geq \mu$. Hence
$\hat \mu_p(Y;\bxi;\ell)=p^{-\mu\dim(Y)}N(p^\mu)=p^{-\mu\dim (Y)}$, since $\bxi\in Y(\ZZ/p^\mu\ZZ)$.
The claim now easily follows.

Now there are  $\rho(k^r)\leq k^{rn}$ choices of $\bxi$  which we must consider. 
We substitute the estimate from Proposition~\ref{p:main'} to obtain
\begin{align*}
N^{(1)}(H)
=~&\mu_\infty(Y;H) S(H)\\ &+
O\left(
H^{\Delta(1+r\{n+\dim(L)+2\dim(G)\})} \mu_\infty(Y;H)^{1-\delta}+H^\Delta E(f;H)\right),
\end{align*}
where 
$E(f;H)$ is given by \eqref{eq:E(gH)}
and
\begin{equation}\label{eq:sugar}
S(H)
=\sum_{k\leq H^\Delta} \mu(k) 
\sum_{\substack{\bxi\in Y(\ZZ/k^r\ZZ)\\f(\bxi)\equiv 0\bmod{k^r}}}
\prod_{p<\infty} \hat \mu_p(Y;\bxi;k^r).
\end{equation}

\begin{lemma} \label{l:e}
Let $g\in \ZZ[X_1,\ldots,X_n]$ be  such that $g\not\equiv  0$ on $Y$.
Then there exists $\eta>0$ such that  
$
E(g;H)=O_g(\mu_\infty(Y;H)^{1-\eta}),
$
where $E(g;H)$ is given by \eqref{eq:E(gH)}.
\end{lemma}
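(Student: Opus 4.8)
The plan is to relax the condition $g(\x)=0$ to the single congruence $g(\x)\equiv 0\bmod p$ for a judiciously chosen prime $p$, and then feed this into Proposition \ref{p:main'}. Indeed, if $\x\in Y(\ZZ)$ satisfies $g(\x)=0$, then its reduction $\bxi$ modulo $p$ lies in $Y(\ZZ/p\ZZ)$ and satisfies $g(\bxi)\equiv 0\bmod p$, whence
$$
E(g;H)\le \sum_{\substack{\bxi\in Y(\ZZ/p\ZZ)\\ g(\bxi)\equiv 0\bmod p}} V_p(H;\bxi).
$$
We may assume $Z=Y\cap\{g=0\}\ne\emptyset$, since otherwise the lemma is vacuous.

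Two elementary inputs make this bound efficient. First, since $g\not\equiv 0$ on the irreducible variety $Y$, the locus $Z$ has pure codimension $1$ in $Y$; consequently, for every prime $p$ of good reduction for $Y$ and $Z$ we have $\#\{\bxi\in Y(\ZZ/p\ZZ):g(\bxi)\equiv 0\bmod p\}=\#Z(\FF_p)\ll_g p^{\dim(Y)-1}$ by the Lang--Weil estimate (only its upper bound is needed here). Second, taking $\ell=p$ in \eqref{eq:deck} yields $\prod_{q<\infty}\hat\mu_q(Y;\bxi,p)\ll p^{-\dim(Y)}$, uniformly in $\bxi$.

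Combining these two facts with Proposition \ref{p:main'}, whose error term is uniform in the modulus, gives for every such $p$
$$
E(g;H)\ll_g \frac{\mu_\infty(Y;H)}{p}+p^{D-1}\,\mu_\infty(Y;H)^{1-\delta},\qquad D=\dim(Y)+\dim(L)+2\dim(G),
$$
where $\delta>0$ is the exponent furnished by Proposition \ref{p:main'}. I would then invoke Bertrand's postulate to pick a prime $p$ with $p\asymp \mu_\infty(Y;H)^{\delta/D}$; since $\mu_\infty(Y;H)\to\infty$ by \eqref{eq:volume}, this $p$ is of good reduction once $H$ is large, and the two terms above balance to $\ll_g\mu_\infty(Y;H)^{1-\delta/D}$. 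Taking $\eta=\delta/D$, and absorbing the bounded range of $H$ into the implied constant, completes the proof.

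There is no serious obstacle: the crux is merely the observation that a point count over a proper subvariety of $Y$ can be dominated by a congruence-constrained count over $Y$ itself, where the main estimate of \S\ref{s:counting} applies with explicit control in the modulus. The only points requiring (routine) attention are the codimension-one statement for $Z$ — it is precisely what produces the decisive saving $\#Z(\FF_p)/p^{\dim(Y)}\ll 1/p$ in the main term — and the elementary bookkeeping of the finitely many bad primes prior to the optimisation over $p$.
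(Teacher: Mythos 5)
Your proposal is correct and follows essentially the same route as the paper: relax $g(\x)=0$ to $g(\x)\equiv 0\bmod p$ for a large prime $p$, bound the number of admissible residues $\bxi$ by Lang--Weil, apply Proposition \ref{p:main'} together with \eqref{eq:deck}, and optimise the choice of $p$ against $H$ (your exponent $D-1=3\dim(G)-1$ matches the paper's). The only difference is that you make the choice $p\asymp\mu_\infty(Y;H)^{\delta/D}$ and the good-reduction caveat explicit, which the paper leaves implicit in "choosing $p$ appropriately."
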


\begin{proof}
	
Pick a large prime $p$. It follows from Proposition \ref{p:main'} and \eqref{eq:deck} that 
\begin{align*}
E(g;H)&\leq \card \{\x\in Y(\ZZ): |\x|\leq H, ~ p\mid g(\x)\}
\\
&=
\sum_{\substack{\bxi\in Y(\ZZ/p\ZZ)\\g(\bxi)\equiv 0\bmod{p}}} V_p(H;\bxi)\\
&\ll 
\sum_{\substack{\bxi\in Y(\ZZ/p\ZZ)\\g(\bxi)\equiv 0\bmod{p}}}\left(
 \frac{\mu_\infty(Y;H)}{p^{\dim(Y)}}+
p^{\dim(L)+2\dim(G)} \mu_\infty(Y;H)^{1-\delta}
\right),
\end{align*}
for some $\delta>0$. 
Since $Y$ is irreducible, $\dim (Y\cap \{g=0\})<\dim(Y)$ and
 so it follows from the Lang-Weil estimates that
$$
\card \{\bxi\in Y(\ZZ/p\ZZ):\;g(\bxi)\equiv 0\bmod{p}\}\ll_g p^{\dim(Y)-1}.$$
Hence,
\begin{align*}
E(g;H)
&\ll_g 
 \frac{\mu_\infty(Y;H)}{p}+
p^{3\dim(G)-1} \mu_\infty(Y;H)^{1-\delta},
\end{align*}
since $\dim(Y)=\dim(G)-\dim(L)$. 
This is satisfactory for the lemma, on choosing $p$ appropriately. 
\end{proof}

We may now conclude that there exists $\delta>0$ such that 
\begin{align*}
N^{(1)}(H)
=\mu_\infty(Y;H) S(H)+
O\left(
H^{\Delta(1+r\{n+\dim(L)+2\dim(G)\})} \mu_\infty(Y;H)^{1-\delta}\right),
\end{align*}
where $S(H)$ is given by \eqref{eq:sugar}.
Turning to an analysis of $S(H)$, we appeal to 
 \Hyp, which gives
$$
\rho(k^r)\leq C_{Y,f,r}^{\omega(k)}k^{r(\dim(Y)-1)}\ll_{\ve,r} k^{r(\dim(Y)-1)+\ve}, 
$$
for an appropriate constant $C_{Y,f,r}>0$.
Invoking \eqref{eq:deck} with $\ell=k^r$, and recalling that $r\geq 2$,  we may 
therefore extend the sum over $k$ to infinity, finding that 
$$
S(H)
=\sum_{k=1}^\infty \mu(k) 
\sum_{\substack{\bxi\in Y(\ZZ/k^r\ZZ)\\f(\bxi)\equiv 0\bmod{k^r}}}
\prod_{p<\infty} \hat \mu_p(Y;\bxi;k^r) +O(H^{-\Delta/2}).
$$
The  main term here is equal to the Euler product $\mathfrak{S}(Y,f,r)$  that is defined in \eqref{eq:euler0}. 
Putting everything together, we have therefore established the following result, which completes our treatment of the small moduli.

\begin{prop}\label{lem:dynamics}
Let $Y\simeq G/L\subset \AA^n$ be a symmetric variety over $\QQ$ satisfying  (i)--(iv), 
with $Y(\ZZ)\neq \emptyset$, and with $L$  semisimple and simply connected.
Assume that $f\in \ZZ[X_1,\dots,X_n]$ satisfies Hypothesis-$\rho$. Then 
there exists $\delta>0$  such that
\begin{align*}
N^{(1)}(H)=&
\mathfrak{S}(Y,f,r) \mu_{\infty}(Y;H)\\
&+
O\left(
H^{\Delta(1+r\{n+\dim(L)+2\dim(G)\})}\mu_{\infty}(Y;H)^{1-\delta}  
+\mu_{\infty}(Y;H) H^{-\Delta/2}
\right).
\end{align*}
Moreover, $\mathfrak{S}(Y,f,r)>0$ provided that $f$ has no $r$-power divisors on $Y$.
\end{prop}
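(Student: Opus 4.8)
The plan is to assemble the asymptotic formula from the estimates already prepared in this section, and then to establish the positivity of $\mathfrak{S}(Y,f,r)$ by a local argument, prime by prime.

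For the main asymptotic, I would start from the decomposition of $N^{(1)}(H)$ into residue classes modulo $k^r$ recorded above, having first isolated the contribution of $\x$ with $f(\x)=0$ into the term $O(H^\Delta E(f;H))$. Substituting Proposition \ref{p:main'} for each $V_{k^r}(H;\bxi)$ and inserting the bound \eqref{eq:deck} for the product of local densities, then summing over the at most $\rho(k^r)\le k^{rn}$ admissible residues $\bxi$ and over $k\le H^\Delta$, yields a main term $\mu_\infty(Y;H)S(H)$ with $S(H)$ as in \eqref{eq:sugar}, together with the error $O(H^{\Delta(1+r\{n+\dim(L)+2\dim(G)\})}\mu_\infty(Y;H)^{1-\delta})$ coming from the polynomial-in-$\ell$ loss in Proposition \ref{p:main'}. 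It then remains to dispose of the two residual errors. The quantity $H^\Delta E(f;H)$ is $O(H^\Delta\mu_\infty(Y;H)^{1-\eta})$ by Lemma \ref{l:e} applied with $g=f$, which is legitimate because Hypothesis-$\rho$ at $r=1$ forces $f\not\equiv 0$ on $Y$: were $f$ to vanish identically on $Y$, one would have $\rho(p)=\#Y(\FF_p)\gg p^{\dim(Y)}$ by Lang--Weil, contradicting the hypothesis for large $p$. Finally, the truncated sum $S(H)$ is completed to the full Euler product $\mathfrak{S}(Y,f,r)$ of \eqref{eq:euler0} with error $O(H^{-\Delta/2})$: using Hypothesis-$\rho$ in the form $\rho(k^r)\ll_{\ve,r}k^{r(\dim(Y)-1)+\ve}$ together with \eqref{eq:deck} in the form $\prod_{p<\infty}\hat\mu_p(Y;\bxi;k^r)\ll k^{-r\dim(Y)}$, the tail of the $k$-sum is $\ll\sum_{k>H^\Delta}k^{-r+\ve}$, which converges with the stated saving since $r\ge 2$. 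Collecting these estimates gives the displayed formula.

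For the positivity, assume $f$ has no $r$-power divisor on $Y$ and fix a prime $p$. Then there is $\x_p\in Y(\ZZ_p)$ with $p^r\nmid f(\x_p)$, so $f(\x_p)\ne 0$ and the $p$-adic valuation $v_p(f(\x_p))=s<r$. By continuity of $f$ in the $p$-adic topology there is $t_0$ such that every $\x\in Y(\ZZ_p)$ with $\x\equiv \x_p\bmod{p^{t_0}}$ again has $v_p(f(\x))=s$, hence $p^r\nmid f(\x)$; since $\x_p$ is a smooth point of $Y$, this congruence neighbourhood has positive mass for the $p$-adic density of \eqref{eq:U}, so $\hat\mu_p(Y,f,r)>0$. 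As the Euler product $\mathfrak{S}(Y,f,r)=\prod_{p<\infty}\hat\mu_p(Y,f,r)$ converges absolutely ($L$ semisimple, Hypothesis-$\rho$), a vanishing would force some factor to vanish, which does not occur, so $\mathfrak{S}(Y,f,r)>0$.

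The argument is largely bookkeeping; the one step that needs genuine care is passing from a single $p$-adic point $\x_p$ to a full-dimensional family of points at which $p^r\nmid f$, which is where smoothness of $Y$ at the (possibly bad) prime $p$ enters. I expect the error-term accounting in the asymptotic part — ensuring that the saving $\mu_\infty(Y;H)^{1-\delta}$ survives the multiplication by the polynomial loss from Proposition \ref{p:main'} and the summations over $\bxi$ and $k$ — to be the main thing to execute carefully, though it is routine, the resulting formula becoming useful precisely when $\Delta$ is chosen small enough in terms of $r$, $n$, $\dim(G)$ and $\dim(L)$.
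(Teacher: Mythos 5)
Your proposal is correct and follows essentially the same route as the paper: decompose into residue classes, apply Proposition \ref{p:main'} with the bound \eqref{eq:deck} and the count $\rho(k^r)\le k^{rn}$ to get the main term $\mu_\infty(Y;H)S(H)$ with the stated polynomial loss, absorb $H^\Delta E(f;H)$ via Lemma \ref{l:e}, and complete $S(H)$ to the Euler product using Hypothesis-$\rho$ together with \eqref{eq:deck}. The only additions are two routine details the paper leaves implicit — the Lang--Weil argument showing Hypothesis-$\rho$ forces $f\not\equiv 0$ on $Y$, and the Hensel/smoothness argument for positivity of each local factor — both of which are correct.
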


\subsection{Proof of Theorem \ref{t:symmetric2}}

Assuming $\Delta>0$ is chosen to be sufficiently small in terms of $\delta, \dim(G)$ and $r$, the error terms in Proposition \ref{lem:dynamics} can both be made  smaller than the main term.
It remains to show that the 
contribution
\begin{equation}\label{eq:leek2}
N^{(2)}(H)=
\hspace{-0.5cm}
\sum_{H^\Delta<k\ll H^{d/r}} 
\hspace{-0.5cm}
|\mu(k)| 
\card \{\x\in Y(\ZZ): |\x|\leq H, ~ 0\neq f(\x)\equiv 0\bmod{k^r}\}
\end{equation}
is negligible.   
Here, we have truncated the outer sum to $k\ll H^{d/r}$, 
on supposing that  $f$ has degree $d$. 
As remarked in \S \ref{s:intro}, our key observation for handling large moduli is based on the inequality
\begin{align*}
N^{(2)}(H)
&\leq \sum_{H^\Delta<k\ll H^{d/r}} |\mu(k)| 
\card \{\x\in Y(\ZZ): |\x|\leq H, ~ f(\x)\equiv 0\bmod{k^2}\}\\
&= \sum_{H^\Delta<k\ll H^{d/r}} |\mu(k)| 
\sum_{\substack{\bxi\in Y(\ZZ/k^2\ZZ)\\f(\bxi)\equiv 0\bmod{k^2}}}
V_{k^2}(H;\bxi),
\end{align*}
in the notation of Proposition \ref{p:main'}. Combining this result with \eqref{eq:deck} and \Hyp, we therefore conclude that 
\begin{align*}
N^{(2)}(H)
&\ll 
\hspace{-0.2cm}
\sum_{H^\Delta<k\ll H^{d/r}} 
\hspace{-0.2cm}
|\mu(k)| \rho(k^2) \left\{
\frac{\mu_\infty(Y;H)}{k^{2\dim(Y)}}
+ k^{2\dim(L)+4\dim(G)}  \mu_\infty(Y;H)^{1-\delta}\right\}\\
&\ll 
\hspace{-0.2cm}
\sum_{H^\Delta<k\ll H^{d/r}} 
\hspace{-0.2cm}\left\{
\frac{\mu_\infty(Y;H)}{k^2}
+ k^{6\dim(G)}  \mu_\infty(Y;H)^{1-\delta}\right\}\\
&\ll 
\frac{\mu_\infty(Y;H)}{H^\Delta}
+ H^{\frac{d}{r}(1+6\dim(G))}  \mu_\infty(Y;H)^{1-\delta}.
\end{align*}
The first term is satisfactory and the second term is also satisfactory provided that $r$ is taken to be sufficiently large in terms of $d, \dim(G)$ and $\delta$. This completes the proof of Theorem \ref{t:symmetric2}.

\subsection{Generalisation of Theorem \ref{t:symmetric2} and proof of Theorem \ref{t:symmetric}}
\label{s:gene}
In this section we discuss symmetric varieties $Y\simeq G/L$ when $L$ is not necessarily a semisimple simply connected group. This requires a more delicate analysis because such varieties may fail to satisfy 
the Hardy--Littlewood asymptotic formula. Throughout this section we assume that conditions (i)--(iv) from \S  \ref{s:intro} hold.
Let $\mathcal{O}_{\A}=\prod'_{p\le \infty} \mathcal{O}_p\subset Y(\A)$ be an orbit of $G(\A)$ in $Y(\A)$.
Our goal is to estimate the counting function
$$
N_r(\mathcal{O}_{\A};f;H)=\card\{\x\in Y(\ZZ)\cap \mathcal{O}_{\A} : |\x|\leq H, ~\text{$f(\x)$ is $r$-free}\}.
$$
We introduce local densities associated to the orbit $\mathcal{O}_{\A}$:
$$
\hat \mu_p(\mathcal{O}_\A,f,r)=\lim_{t\rightarrow \infty} p^{-t\dim(Y)}\#\{\x\in Y(\ZZ_p)\cap \mathcal{O}_\A \; \hbox{mod}\; p^t:  ~p^r\nmid f(\x)\}.
$$
For almost all $p$, $\mathcal{O}_p\supset Y(\ZZ_p)$ which implies that $\hat \mu_p(\mathcal{O}_\A,f,r)=\hat \mu_p(Y,f,r)$.
We also define the corresponding Euler product 
$$
\mathfrak{S}(\mathcal{O}_{\A},f,r)=L(1,\rho_L)\prod_{p<\infty} L_p(1,\rho_L)^{-1} \hat \mu_p(\mathcal{O}_\A,f,r),
$$
where $L(s,\rho_L)$ is given by \eqref{eq:artin}.
Letting 
$\hat \mu_p(Y)=m_{Y,p}(Y(\ZZ_p))$, in the notation of \eqref{eq:U}, 
it follows from Hypothesis-$\rho$ that the product
$$
\prod_{p<\infty} \frac{\hat \mu_p(Y,f,r)}{\hat \mu_p(Y)}
$$
converges absolutely,
so that the Euler product $\mathfrak{S}(\mathcal{O}_{\A},f,r)$ also converges absolutely.

With this notation, we establish the following result.

\begin{theorem}\label{th:general}
Let $Y\simeq G/L\subset \AA^n$ be a symmetric variety over $\QQ$ satisfying  (i)--(iv).
Let $\mathcal{O}_{\A}\subset Y(\A)$ be an orbit of $G(\A)$.
% such that $Y(\ZZ)\cap \mathcal{O}_{\A} \neq \emptyset$.
Assume that the polynomial $f\in \ZZ[X_1,\dots,X_n]$ satisfies Hypothesis-$\rho$.
Then for all sufficiently large $r$, there exists $\delta>0$  such that
\begin{align*}
N_r(\mathcal{O}_{\A};f;H)=
\delta(\mathcal{O}_{\A})\mathfrak{S}(\mathcal{O}_{\A},f,r) \mu_Y(\mathcal{O}_\infty(H))
+
O_r\left(
\mu_Y(\mathcal{O}_\infty(H))^{1-\delta}  
\right).
\end{align*}
Moreover, if we assume that there exists $\x\in Y(\ZZ)\cap \mathcal{O}_{\A}$ such that $f(\x)$ is $r$-free, then 
$\delta(\mathcal{O}_{\A})>0$ and $\mathfrak{S}(\mathcal{O}_{\A},f,r)>0$.
\end{theorem}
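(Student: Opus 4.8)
The plan is to rerun the argument proving Theorem~\ref{t:symmetric2}, replacing the variety-wide count of Corollary~\ref{cor:variety} by the orbit-refined count of Corollary~\ref{cor:orbit}, and carrying the cohomological weight $\delta(\mathcal{O}_{\A})$ and the Artin convergence factors $L_p(1,\rho_L)^{-1}$, $L(1,\rho_L)$ through the computation. If $\delta(\mathcal{O}_{\A})=0$ then $Y(\QQ)\cap\mathcal{O}_{\A}=\emptyset$ and the formula is trivial, so assume $\delta(\mathcal{O}_{\A})>0$. Apply the M\"obius identity \eqref{eq:spud} to $N_r(\mathcal{O}_{\A};f;H)$, with $\x$ now ranging over $Y(\ZZ)\cap\mathcal{O}_{\A}$; the summand vanishes unless $k\ll H^{d/r}$, where $d$ is the degree of $f$, so split the $k$-sum at a small threshold $k\le H^{\Delta}$ into parts $N^{(1)}$ and $N^{(2)}$.

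For $N^{(1)}$, first discard the $\x$ with $f(\x)=0$: since $Y(\ZZ)\cap\mathcal{O}_{\A}\subset Y(\ZZ)$, and all real orbits are non-compact with the same volume growth order by Lemma~\ref{l:inf} and \eqref{eq:volume}, so that $\mu_\infty(Y;H)\asymp m_Y(\mathcal{O}_\infty(H))$, Lemma~\ref{l:e} bounds this part by $O(H^{\Delta}m_Y(\mathcal{O}_\infty(H))^{1-\eta})$. Breaking the rest into residues $\bxi\in Y(\ZZ/k^r\ZZ)$ with $f(\bxi)\equiv0\bmod{k^r}$ and applying Corollary~\ref{cor:orbit} with $\ell=k^r$ produces the main term
$$
\delta(\mathcal{O}_{\A})\,m_Y(\mathcal{O}_\infty(H))\sum_{k\le H^{\Delta}}\mu(k)\sum_{\substack{\bxi\in Y(\ZZ/k^r\ZZ)\\ f(\bxi)\equiv0\bmod{k^r}}}m_{Y,f}\big(\mathcal{O}_f\cap B_f(\bxi,k^r)\big)
$$
plus an error $\ll\sum_{k\le H^{\Delta}}\rho(k^r)\,k^{r(\dim(L)+2\dim(G))}m_Y(\mathcal{O}_\infty(H))^{1-\rho}$. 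Hensel's lemma (valid at every prime, as $Y$ is smooth), combined with $\mathcal{O}_p\supset Y(\ZZ_p)$ for all but finitely many $p$ and finiteness of $m_{Y,f}(\mathcal{O}_f)$, gives the analogue of \eqref{eq:deck}, namely $m_{Y,f}(\mathcal{O}_f\cap B_f(\bxi,\ell))\ll\ell^{-\dim(Y)}$ uniformly in $\bxi$; then \Hyp, which yields $\rho(k^r)\ll_{\ve,r}k^{r(\dim(Y)-1)+\ve}$, makes the $k$-sum absolutely convergent, so it extends to $k=\infty$ at cost $O(H^{-\Delta/2})$, while the trivial bound $\rho(k^r)\le k^{rn}$ turns the polynomial error into $O(H^{\Delta(1+r\{n+\dim(L)+2\dim(G)\})}m_Y(\mathcal{O}_\infty(H))^{1-\rho})$. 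Finally, Fubini factors the completed sum into an Euler product: partitioning $\{\x\in\mathcal{O}_p\cap Y(\ZZ_p):p^r\mid f(\x)\}$ over residues modulo $p^r$, the sieve identity telescopes the $p$-th factor to $L_p(1,\rho_L)^{-1}\hat\mu_p(\mathcal{O}_{\A},f,r)$, and the prefactor $L(1,\rho_L)$ (present because $t_L=0$ by (ii)) is picked up once, so the completed sum equals $\mathfrak{S}(\mathcal{O}_{\A},f,r)$. Choosing $\Delta$ small in terms of $\delta$, $\dim(G)$ and $r$ makes both errors smaller than the main term $\delta(\mathcal{O}_{\A})\mathfrak{S}(\mathcal{O}_{\A},f,r)\,m_Y(\mathcal{O}_\infty(H))$.

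For $N^{(2)}$, I would argue exactly as in the proof of Theorem~\ref{t:symmetric2}: bound it by replacing the condition modulo $k^r$ with the condition modulo $k^2$, apply Corollary~\ref{cor:orbit} with $\ell=k^2$ together with the analogue of \eqref{eq:deck} and \Hyp, and sum over $H^{\Delta}<k\ll H^{d/r}$, obtaining $O(m_Y(\mathcal{O}_\infty(H))H^{-\Delta}+H^{\frac{d}{r}(1+6\dim(G))}m_Y(\mathcal{O}_\infty(H))^{1-\delta})$, which is negligible once $r$ is large in terms of $d$, $\dim(G)$ and $\delta$. Combining the two ranges gives the asymptotic formula. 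For positivity, if some $\x_0\in Y(\ZZ)\cap\mathcal{O}_{\A}$ has $f(\x_0)$ $r$-free, then $Y(\QQ)\cap\mathcal{O}_{\A}\ne\emptyset$, whence $\delta(\mathcal{O}_{\A})>0$ by the Borovoi--Rudnick criterion; moreover the image of $\x_0$ in each $Y(\ZZ_p)$ lies in $\mathcal{O}_p$ with $p^r\nmid f(\x_0)$, and since the conditions $\x\in\mathcal{O}_p$ and $p^r\nmid f(\x)$ are open on $Y(\ZZ_p)$, a neighbourhood of $\x_0$ contributes positively to $\hat\mu_p(\mathcal{O}_{\A},f,r)$; absolute convergence of the defining Euler product (from \Hyp) then gives $\mathfrak{S}(\mathcal{O}_{\A},f,r)>0$.

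The main obstacle is the bookkeeping forced by the convergence factors. In the non-semisimple case $\prod_p m_{Y,p}$ converges only conditionally, the absolutely convergent object being $\mathfrak{S}(\mathcal{O}_{\A},f,r)=L(1,\rho_L)\prod_p L_p(1,\rho_L)^{-1}\hat\mu_p(\mathcal{O}_{\A},f,r)$; one must check that interchanging the $k$-sum with this Euler product is legitimate, and that the error of Corollary~\ref{cor:orbit}, which is polynomial in $\ell$, is genuinely absorbed by the main term after the split at $H^{\Delta}$. A related delicate point, needed for positivity, is the control of the finitely many ``bad'' local factors $\hat\mu_p(\mathcal{O}_{\A},f,r)$ at primes with $\mathcal{O}_p\subsetneq Y(\ZZ_p)$, which rests on the Borovoi--Rudnick description of $\delta$ and on openness of the relevant $p$-adic conditions.
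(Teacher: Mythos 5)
Your proposal is correct and follows essentially the same route as the paper: the authors likewise rerun the proof of Theorem \ref{t:symmetric2} with Corollary \ref{cor:orbit} in place of Corollary \ref{cor:variety}, carrying the weight $\delta(\mathcal{O}_{\A})$ and the Artin convergence factors through the orbit-refined analogue \eqref{eq:vl} of Proposition \ref{p:main'}, extending the $k$-sum via Hypothesis-$\rho$ and the analogue of \eqref{eq:deck}, and treating $N^{(2)}$ by passing from $k^r$ to $k^2$. The only cosmetic difference is that the paper bounds the $f(\x)=0$ contribution by repeating the argument of Lemma \ref{l:e} directly with the orbit count \eqref{eq:vl}, rather than invoking Lemma \ref{l:e} together with a comparison of $\mu_\infty(Y;H)$ and $m_Y(\mathcal{O}_\infty(H))$.
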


If the variety $Y$ additionally satisfies the smoothness assumption \eqref{eq:smooth},
then it follow from the argument in  \cite[Lemma~1.8.2]{borovoi} that 
$$
\mu_Y(\mathcal{O}_\infty(H))=\mu_\infty(\mathcal{O}_{\A};H),
$$
where the local density $\mu_\infty(\mathcal{O}_{\A};H)$ is defined analogously to \eqref{eq:real2}.

\begin{rem}\label{r:BM}
We note that 
$$
N_r(Y;f;H)=\sum_{\mathcal{O}_{\A}\subset Y(\A)} N_r(\mathcal{O}_{\A};f;H),
$$
where the sum is taken over finitely many orbits $\mathcal{O}_{\A}$
that have non-trivial intersection with $Y(\RR)\times \prod_{p<\infty} Y(\ZZ_p)$.
Hence, Theorem \ref{th:general} implies an asymptotic formula for the counting function
$N_r(Y;f;H)$. In fact, this asymptotic formula can be stated in terms of the Tamagawa volume of a suitable subset of $Y(\A)$ defined by the integral Brauer--Manin obstruction, as introduced by Colliot-Th\'el`ene and Xu \cite{cx}.
We denote by $Y(\A)^{{\rm Br}(Y)}$ the kernel of the Brauer--Manin pairing. 
Since $G$ is assumed to be simply connected, this kernel consists of orbits
of $G(\A)$ (see \cite[Thm.~3.2]{cx}). If $\mathcal{O}_\A \cap Y(\A)^{{\rm Br}(Y)}=\emptyset$,
then the orbit $\mathcal{O}_\A$ contains no rational points, and 
$\delta(\mathcal{O}_{\A})=0$. On the other hand, if $\mathcal{O}_\A \subset Y(\A)^{{\rm Br}(Y)}$, and  $\mathcal{O}_{\A}$
has non-trivial intersection with $Y(\RR)\times \prod_{p<\infty} Y(\ZZ_p)$,
then it follows from \cite[Thm.~3.7]{cx} that $\mathcal{O}_{\A}\cap Y(\ZZ)\ne \emptyset$.
In particular, we conclude that for these orbits $\delta(\mathcal{O}_{\A})>0$.
Since $\delta:Y(\A)\to \{0,|\hbox{Pic}(L)|\}$, we have $\delta(\mathcal{O}_{\A})=|\hbox{Pic}(L)|$ for these orbits.
Thus, setting 
$$
Y(\A_f)^{(f,r)}=\left\{(\y_p)\in \prod_{p<\infty} Y(\ZZ_p):\; f(\y_p)\not\equiv 0\; (\hbox{mod}\, p^r) \right\},
$$
we conclude that 
\begin{align*}
N_r(Y;f;H)=&|\hbox{Pic}(L)| \cdot (m_Y\otimes m_{Y,f}) \left((Y(\RR)_H\times Y(\A_f)^{(f,r)}) \cap Y(\A)^{{\rm Br}(Y)}\right)\\
&+O(V(H)^{1-\delta}),
\end{align*}
where
$
V(H)=\max_{\mathcal{O}_\A\subset Y(\A)} \mu_{\infty}(\mathcal{O}_{\A};H),
$
with the maximum  taken over the finitely many orbits
 having non-trivial intersection with $Y(\RR)\times \prod_{p<\infty} Y(\ZZ_p)$.
\end{rem}

\begin{proof}[Proof of Theorem \ref{th:general}]
The proof follows the same strategy as the proof of Theorem \ref{t:symmetric2} presented in \S3.1--3.2.
As in the proof of Theorem \ref{t:symmetric2}, we start by estimating
\begin{align*}
N^{(1)}(\mathcal{O}_{\A};H)
=~&
\sum_{k\leq H^\Delta} 
\mu(k) 
\card \{\x\in Y(\ZZ)\cap \mathcal{O}_{\A}: |\x|\leq H, ~ f(\x)\equiv 0\bmod{k^r}
\}\\
&+ O\left(H^{\Delta}E_{\mathcal{O}_{\A}}(f;H)\right),
\end{align*}
where
$
E_{\mathcal{O}_{\A}}(f;H)=\card \{\x\in Y(\ZZ)\cap \mathcal{O}_{\A}: |\x|\leq H, ~ f(\x)=0\}.
$
Then
$$
N^{(1)}(\mathcal{O}_{\A};H)
=\sum_{k\leq H^\Delta} \mu(k) \sum_{\substack{\bxi\in Y(\ZZ/k^r\ZZ)\\
		f(\bxi)\equiv 0\bmod{k^r}}}
V_{k^r}(\mathcal{O}_{\A};H;\bxi) + O\left(H^{\Delta}E_{\mathcal{O}_{\A}}(f;H)\right), 
$$
where
$$
V_\ell(\mathcal{O}_{\A};H;\bxi)=\card \{\x\in Y(\ZZ)\cap \mathcal{O}_{\A}: |\x|\leq H, ~ \x\equiv \bxi \bmod{\ell}
\},
$$
for given $\ell$ and  $\bxi\in Y(\ZZ/\ell\ZZ)$. 
The quantity $V_\ell(\mathcal{O}_{\A};H;\bxi)$ can be estimated as in Proposition \ref{p:main'}.
We obtain that there exists $\delta>0$ such that 
\begin{equation}\begin{split}\label{eq:vl}
	V_\ell(\mathcal{O}_{\A};H;\bxi)=~&\delta(\mathcal{O}_{\A})\mu_Y(\mathcal{O}_\infty(H)) \mathfrak{S}(\mathcal{O}_{\A},\bxi;\ell)  \\
	&+O(\ell^{\dim(L)+2\dim(G)} \mu_Y(\mathcal{O}_\infty(H))^{1-\delta}),
\end{split}
\end{equation}
where 
$$
\mathfrak{S}(\mathcal{O}_{\A},\bxi;\ell)=L(1,\rho_L)\prod_{p<\infty} L_p(1,\rho_L)^{-1} \hat \mu_p(\mathcal{O}_\A,\bxi;\ell)
$$
and
$$
\hat \mu_p(\mathcal{O}_{\A};\bxi,\ell)=\lim_{t\rightarrow \infty} p^{-t\dim (Y)}\#\{\x\in \mathcal{O}_p\cap Y(\ZZ_p)\;\hbox{\rm mod}\; p^t:  \x\equiv \bxi \bmod{p^{v_p(\ell)}}\}.
$$
Indeed, this estimate can be directly deduced from Corollary \ref{cor:orbit}
by observing that
%	$$
%	m_Y(\mathcal{O}_\infty(H))=\mu_\infty(\mathcal{O}_{\A};H),
%	$$
%which follows by the argument in  \cite[Lemma~1.8.2]{borovoi},
%and 
$$
	m_{Y,p}(\mathcal{O}_p\cap B_p(\bxi,\ell))=\hat \mu_p(\mathcal{O}_{\A};\bxi,\ell),
$$
which follows from \eqref{eq:U}. Next, we substitute \eqref{eq:vl} into our work above to deduce that
\begin{align*}
N^{(1)}(\mathcal{O}_{\A};H)
=~&\delta(\mathcal{O}_{\A})\mu_Y(\mathcal{O}_\infty(H)) S(\mathcal{O}_{\A};H)
+O\left(H^\Delta
E_{\mathcal{O}_{\A}}(f;H)\right)\\
&+
O\left(
H^{\Delta(1+r\{n+\dim(L)+2\dim(G)\})} \mu_Y(\mathcal{O}_\infty(H))^{1-\delta}\right),
\end{align*}
where
$$
S(\mathcal{O}_{\A};H)
=\sum_{k\leq H^\Delta} \mu(k) 
\sum_{\substack{\bxi\in Y(\ZZ/k^r\ZZ)\\f(\bxi)\equiv 0\bmod{k^r}}}
\mathfrak{S}(\mathcal{O}_{\A},\bxi;k^r).
$$
Arguing as in the proof of Theorem \ref{t:symmetric2}, we find that 
$$
S(\mathcal{O}_{\A};H)
=\sum_{k=1}^\infty \mu(k) 
\sum_{\substack{\bxi\in Y(\ZZ/k^r\ZZ)\\f(\bxi)\equiv 0\bmod{k^r}}}
\mathfrak{S}(\mathcal{O}_{\A},\bxi;k^r) +O(H^{-\Delta/2}),
$$
with  the main term is equal to the Euler product $\mathfrak{S}(\mathcal{O}_{\A},f,r)$.

Arguing as in Lemma \ref{l:e}, we easily use \eqref{eq:vl} to show that 
\begin{equation}
\label{eq:g}
E_{\mathcal{O}_{\A}}(f;H)=O_g(\mu_Y(\mathcal{O}_\infty(H))^{1-\eta}),
\end{equation}
for some $\eta>0$.
Hence, we conclude that 
\begin{equation}\begin{split} \label{eq:final0}
N^{(1)}(\mathcal{O}_{\A};H)=~&
\delta(\mathcal{O}_{\A})\mathfrak{S}(\mathcal{O}_{\A},f,r) \mu_Y(\mathcal{O}_\infty(H))
+O\left(\mu_Y(\mathcal{O}_\infty(H)) H^{-\Delta/2}\right)
\\
&+
O\left(
H^{\Delta(1+r\{n+\dim(L)+2\dim(G)\})}\mu_Y(\mathcal{O}_\infty(H))^{1-\delta}  
\right),
\end{split}\end{equation}
for every adelic orbit $\mathcal{O}_{\A}\subset Y(\A)$.
When $\Delta>0$ is chosen sufficiently small, the error terms in this estimate
can be made smaller than the main term. 

Now it remains to estimate 
\begin{align*}
N^{(2)}(\mathcal{O}_{\A};H)
&=
\hspace{-0.5cm}
\sum_{H^\Delta<k\ll H^{d/r}} 
\hspace{-0.5cm}
|\mu(k)| 
\card \{\x\in Y(\ZZ)\cap \mathcal{O}_{\A} : |\x|\leq H, ~  f(\x)\equiv 0\bmod{k^r}\}\\
&\le \sum_{H^\Delta<k\ll H^{d/r}} |\mu(k)| 
\sum_{\substack{\bxi\in Y(\ZZ/k^2\ZZ)\\f(\bxi)\equiv 0\bmod{k^2}}}
V_{k^2}(\mathcal{O}_{\A};H;\bxi).
\end{align*}
Thus we can argue as in \S3.2, using \eqref{eq:vl}, combined with Hypotheses-$\rho$ and \eqref{eq:deck}, to conclude that if $r$ is taken sufficiently large, there exists $\delta'>0$ such that
$$
N^{(2)}(\mathcal{O}_{\A};H)=O(\mu_Y(\mathcal{O}_\infty(H))^{1-\delta'}).
$$
This completes the proof of the theorem.
\end{proof}

\begin{proof}[Proof of Theorem \ref{t:symmetric}]
Since $Y(\ZZ)\ne \emptyset$, we may pick $\x\in Y(\ZZ)$ with $f(\x)\neq 0$. Suppose that $|f(\x)|=\prod_{1\leq i\leq \ell}p_i^{r_i}$, for distinct primes $p_1,\dots,p_\ell$. Then $f(\x)$ is $r_0$-free, with $r_0=1+\max_{1\leq i\leq \ell}\{r_i\}$. 
We apply Theorem \ref{th:general} to an orbit $\mathcal{O}_\AA$ that contains this point $\x$. Then 
$\delta(\mathcal{O}_{\A})>0$ and $\mathfrak{S}(\mathcal{O}_{\A},f,r)>0$ if $r\geq r_0.$
Comparing the asymptotic formula given by Theorem \ref{th:general} with
\eqref{eq:g} we deduce that the set 
$\{\x\in Y(\ZZ)\cap \mathcal{O}_{\A} : \text{$f(\x)$ is $r$-free}\}$
is Zariski dense in $Y$.
(We note that $\mu_Y(\mathcal{O}_\infty(H))\to \infty$, as $H\to \infty$, by Lemma \ref{l:inf}.)
\end{proof}

\section{The roadmap for quadrics}\label{s:over}

It is now time to initiate the proof of Theorems \ref{thm:2large}, \ref{thm:2} and \ref{thm:1}.  From this point forwards,  $n\geq 3$ and $Y\subset \AA^n$ is the affine quadric \eqref{eq:green}, where
 $Q\in \ZZ[X_1,\ldots,X_n]$
is a non-singular indefinite quadratic form and 
$m$ is  a non-zero integer such that 
 $-m\det (Q)\neq \square$ when $n=3$.
 We begin with a proof of 
Theorem \ref{thm:3}  in \S \ref{s:dim}.  Next, in \S \ref{s:poly}, we shall establish \Hyp~for the polynomials $f\in \ZZ[X_1,\dots,X_n]$ under consideration. Finally, in \S \ref{s:onion} we shall collect together the main steps in the proof of 
Theorems \ref{thm:2large}, \ref{thm:2} and \ref{thm:1}. The primary ingredients in this endeavour here are the results in \S \ref{s:small} and the treatment of large moduli in \S \ref{s:largesse}.

\subsection{Integral points on affine  quadrics}\label{s:dim} 

In this section we establish Theorem \ref{thm:3}. 
We begin by noting that it suffices to assume that $q$ is absolutely irreducible in the statement of the theorem, rather than merely irreducible over $\QQ$. Indeed, if $q$ factorises as $\ell_1\ell_2$ for linear polynomials $\ell_1,\ell_2 \in \overline{\QQ}[T_1,\ldots,T_\nu]$,   neither one of which is proportional to a linear polynomial defined over $\ZZ$, then $\ell_1,\ell_2$ are not proportional to each other and 
$\ell_1$ must be a conjugate of  $\ell_2$. Moreover,  the integer points $\bt$ in which we are interested must satisfy the pair of equations $\ell_1(\bt)=\ell_2(\bt)=0$. Such points clearly 
contribute only $O_\nu(B^{\nu-2})$ to $M(q;B)$, which is satisfactory.

With the restriction to absolutely irreducible $q$ in place, we will establish Theorem \ref{thm:3} by induction on $\nu\geq 2$, following the approach in \cite[\S 4]{pila}.
We henceforth set
\begin{equation}\label{eq:R}
R(X_0,X_1,\ldots,X_\nu)=X_0^2q(X_1/X_0,\ldots,X_{\nu}/X_0)
\end{equation}
for the homogenised quadratic form associated to $q$. 
In particular $q_0$ is obtained by setting $X_0=0$ in $R$.
Since $q$ is absolutely irreducible it follows that $R$ is absolutely irreducible and so has rank at least $3$. Moreover, by hypothesis,  the quadratic form
$R(0,X_1,\ldots,X_\nu)$ has rank at least $2$.  We will need the following result,
due to Browning, Heath-Brown and Salberger
 \cite[Lemma 13]{pila}.

\begin{lemma}\label{lem:affquad}
Let $\ve>0$, let $B\geq 1$ and suppose that
$R\in \ZZ[X_0,X_1,X_2]$
is a non-singular quadratic form such that 
the binary form 
$R(0,X_{1},X_{2})$ is also
non-singular.  
Then for any  $t\in \ZZ\cap[-B,B]$ we have
$$
\card\left\{
(x_1,x_2) \in \ZZ^2: 
\begin{array}{l}
|x_1|,|x_2| \leq B, ~R(t,x_1,x_2)=0\\
\gcd(t,x_1,x_2)=1
\end{array}
\right\}
=O_\ve(B^{\ve}).
$$
\end{lemma}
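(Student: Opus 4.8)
The plan is to fix $t$ and treat $R(t,X_1,X_2)=0$ as an affine plane conic in $(X_1,X_2)$, so that the problem reduces to bounding the number of solutions of bounded size to $q_0(z_1,z_2)=N$ for the fixed non-singular binary quadratic form $q_0=R(0,X_1,X_2)$ and a suitable non-zero $N$. I would first dispose of the case $t=0$. Here we must count primitive $(x_1,x_2)\in\ZZ^2$ with $q_0(x_1,x_2)=0$; since $q_0$ is non-singular, it either represents $0$ only trivially over $\QQ$, in which case there are no primitive solutions, or it factors over $\QQ$ as a non-zero scalar times a product of two distinct rational linear forms, and each of the two zero lines contains only $O(1)$ primitive lattice points. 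Hence $t=0$ contributes $O(1)$, and from now on we may assume $t\neq0$.

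For $t\neq0$ the structural point is that homogenising $R(t,X_1,X_2)$ in a new variable $X_0'$ yields precisely $R(tX_0',X_1,X_2)$, which is obtained from $R$ by an invertible linear substitution; since $R$ is non-singular, so is this ternary form, and therefore the affine conic $\{R(t,X_1,X_2)=0\}$ is smooth — in particular it is not a union of lines. Completing the square over $\QQ$, where the denominators are bounded in terms of $\disc(q_0)$ and are absorbed into the implied constant, and then clearing denominators, I would obtain an injection of $\{(x_1,x_2)\in\ZZ^2:\,|x_1|,|x_2|\le B,\ R(t,x_1,x_2)=0\}$ into the set of $(z_1,z_2)\in\ZZ^2$ lying in a fixed residue class with $|z_1|,|z_2|\ll B$ and $q_0(z_1,z_2)=N_t$, where $N_t$ is a fixed non-zero rational multiple of $t^2$ — its non-vanishing being equivalent to the smoothness just noted — so that $N_t\neq0$ and $|N_t|\ll B^2$.

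It remains to bound, for the fixed non-singular binary form $q_0$ and a fixed non-zero $N$ with $|N|\ll B^2$, the number of $(z_1,z_2)\in\ZZ^2$ of size $O(B)$ with $q_0(z_1,z_2)=N$; I would split according to the type of $q_0$ over $\QQ$. If $q_0$ is isotropic over $\QQ$, then it is $\mathrm{GL}_2(\QQ)$-equivalent to a scalar multiple of $Z_1Z_2$, so after a further bounded-denominator change of variables the equation becomes $z_1z_2=N'$ with $N'\neq0$, and the divisor bound gives $O_\ve(|N'|^\ve)=O_\ve(B^\ve)$. If $q_0$ is anisotropic and, after possibly negating, positive-definite, the count is at most the number of representations of (a fixed non-zero multiple of) $N$ by a fixed positive-definite integral binary form, and this is $O_\ve(|N|^\ve)=O_\ve(B^\ve)$ through the correspondence with ideals of bounded norm in an imaginary quadratic order.

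The remaining case, $q_0$ anisotropic and indefinite, is the main obstacle, since then $q_0=N$ is a Pell-type equation and a priori has infinitely many integral solutions. I would treat it by showing that the solutions fall into $O_\ve(|N|^\ve)$ orbits under the automorphism group of $q_0$ — the number of orbits again being a divisor-type bound, now for ideal norms in the associated real quadratic order — while this automorphism group is, up to sign, infinite cyclic generated by a fixed fundamental automorph $\eta>1$; along any single orbit the product of the two conjugate archimedean sizes equals a fixed quantity $\asymp|N|$, so the constraint $|z_i|\ll B$ confines the orbit to a multiplicative interval of length $\ll B^2$ and hence to $O(\log B)$ of its members. Therefore this case contributes $O_\ve(|N|^\ve\log B)=O_\ve(B^\ve)$, using $\log B\ll_\ve B^\ve$. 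Assembling the three cases with the $t=0$ estimate yields the bound, and this is the line of argument underlying \cite[Lemma~13]{pila}.
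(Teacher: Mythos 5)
You should first note that the paper does not prove this lemma at all: it is quoted verbatim from Browning--Heath-Brown--Salberger \cite[Lemma 13]{pila}, and the sentence the authors place immediately after the statement records the one feature that matters for every application in the paper, namely that the implied constant is independent of the coefficients of $R$ (as well as of $t$). Your argument does not deliver this uniformity, and that is a genuine gap rather than a cosmetic one. Completing the square introduces a rational vector $a$ with $2M_0a=L^T$ (where $M_0$ is the matrix of $q_0$ and $L$ the linear part), whose denominators you explicitly ``absorb into the implied constant''; after clearing them the target value is essentially $t^2\det(R)/\det(q_0)$ times the square of the denominator, so $|N_t|$ is bounded by $B^2$ only up to a factor that is polynomial in the height of $R$, and the substituted variables $z$ range over a box of size $\|R\|^{O(1)}B$ rather than $O(B)$. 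Consequently the divisor and ideal-counting bounds $O_\ve(|N|^\ve)$ cost an extra factor $\|R\|^{O(\ve)}$, and in the indefinite anisotropic case both the constants in ``$\asymp|N|$'' and the length of the multiplicative interval depend on $q_0$, so your final logarithm is $\log B+\log\|R\|$. The net outcome is a bound of the shape $O_\ve\bigl((B\|R\|)^{\ve}\bigr)$, i.e.\ $O_{\ve,R}(B^\ve)$, not $O_\ve(B^\ve)$. This is precisely the trap the authors flag after Theorem \ref{thm:3}: ``it would be easy to prove a version of the theorem with an implied constant that is allowed to depend on $q$ by first diagonalising $q_0$ and then completing the square where possible.''

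The uniformity is not a luxury here. Lemma \ref{lem:affquad} with $t=1$ is exactly what yields the base case $\nu=2$ of Theorem \ref{thm:3}, whose constant must be uniform in $q$ because the induction and the applications in \S\ref{s:largesse} feed it forms with unbounded coefficients; likewise in \S\ref{s:large1} the lemma is applied to the forms $P(X_1,X_2)-(m-eh^2)X_0^2$ as $h$ runs over an interval of length $\asymp H$. Note also that you never use the primitivity hypothesis $\gcd(t,x_1,x_2)=1$ when $t\neq0$; for an $R$-dependent bound this is harmless (the affine conic is smooth, hence not a union of lines), but it is one of the inputs available for obtaining the uniform statement. To close the gap one must carry out the reduction integrally --- keeping track of how the discriminants of $R$ and of $q_0$ enter, and showing that whenever these quantities are large compared with $B$ the solution set is correspondingly depleted --- which is the actual content of \cite[Lemma 13]{pila}; as written, your proof establishes a strictly weaker statement.
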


Following our convention, the  implied constant in this estimate does not depend 
on $t$ or on the coefficients of $R$.
The case $\nu=2$ of Theorem~\ref{thm:3} is now a trivial consequence of Lemma \ref{lem:affquad} with $t=1$.
We will require  a separate treatment of the case $\nu=3$ when $q$ is absolutely irreducible with $q=q_0$.
In this case the statement of Theorem \ref{thm:3} follows from taking $d=2$ in work of Heath-Brown \cite[Thm.~3]{annals}.

We now turn to the proof of Theorem \ref{thm:3} when $\nu \geq 3$, assuming that  $q\neq q_0$ when $\nu=3$.
Our plan is to take hyperplane slices and apply the inductive hypothesis. 
We claim that there 
exists $\a\in \ZZ^\nu$, with $0<|\a|=O_\nu(1)$, such that the quadratic polynomial obtained by eliminating a variable from the pair of equations
\begin{equation}\label{eq:pair}
q(T_1,\ldots,T_\nu)=a_1T_1+\cdots+a_\nu T_\nu=0
\end{equation}
is  absolutely irreducible and has quadratic part with rank at least $2$. 
Taking this claim on faith for the moment, we may assume after a possible change of variables that $\a=(0,\ldots,0,1)$.  Thus, for any $k\in \ZZ$, the polynomial  $q_k=q(T_1,\ldots,T_{\nu-1},k)$ is both absolutely irreducible and has quadratic part with  rank at least $2$.
In this way we obtain the bound
$$
M(q;B)\leq \sum_{|k|\leq B} M(q_k;B) \ll_{\ve,\nu} B^{\nu-2+\ve}, 
$$
by the inductive hypothesis. This completes the proof of the theorem subject to the claim.

Let us call a vector $\a\in \CC^\nu$ {\em defective}
if the polynomial induced by \eqref{eq:pair} fails to be absolutely irreducible or has quadratic part with rank at most $1$.  We will construct a proper subvariety $E\subset \PP^{\nu-1}$ defined over $\ZZ$, 
with degree $O_\nu(1)$, such that $[\a]\in E$ whenever $\a$ is defective. 
Once this is achieved it is a simple matter to find  a vector $\a\in \ZZ^\nu$ satisfying the claim. 
Indeed, 
for any $A>1$, there are at least $c_1(\nu ) A^\nu$ possible non-zero 
vectors $\a\in \ZZ^\nu$ for which $|\a|\leq A$, for an appropriate constant $c_1(\nu)>0$. Moreover, it follows from the trivial estimate
\cite[Lemma 2]{pila} that there are 
at most $c_2(\nu)A^{\nu-1}$  defective vectors $\a\in \ZZ^\nu$ 
satisfying $|\a|\leq A$, for an appropriate constant $c_2(\nu)>0$.
The claim then follows on 
taking $A>c_2(\nu)/c_1(\nu)$.

It remains to  construct the variety $E$.   
Let us begin by considering vectors $\a\in \CC^\nu$ for which \eqref{eq:pair} is not absolutely irreducible. 
When $q$ is homogeneous, so that $q=q_0$ and $\nu\geq 4$, then  it is 
well-known (see \cite[Lemma 7]{pila}, for example) 
that there exists a non-zero form 
$F\in \ZZ[X_1,\ldots,X_\nu]$, with degree $O_\nu(1)$, 
such that $F(\a)= 0$ when the intersection is not absolutely irreducible. 
Alternatively, when $q\neq q_0$ and $\nu \geq 3$, we will  work with the homogenised quadratic form \eqref{eq:R}.
Let $U\subset \PP^\nu$ denote the quadric $R=0$.
Using elimination theory we can construct a form 
$F\in \ZZ[X_1,\ldots,X_\nu]$, with degree $O_\nu(1)$, 
such that $F(\a)= 0$ whenever the intersection of $U$ with the 
hyperplane $\sum_{i=1}^\nu a_iX_i=0$ produces a reducible quadric.  
We need to show  that $F$ is non-zero. 
Let $x=[1,0,\ldots,0]$ and let
$\Sigma_x$ denote the set of hyperplanes in $\PP^\nu$ containing $x$.
Then the desired conclusion follows from the version of Bertini's theorem found in 
Fulton and Lazarsfeld \cite[Thm.~1.1]{F-L}, which shows 
that $U\cap H$ is absolutely irreducible for generic $H\in \Sigma_x$.
We let $E_1\subset \PP^{\nu-1}$  denote the  projective hypersurface $F=0$.

Shifting attention to
the vectors $\a\in \CC^\nu$ for which \eqref{eq:pair} has quadratic part with rank at most $1$,
 the $2\times 2$  minors of the underlying quadratic form give a  system of six
 homogeneous quadratic equations whose simultaneous vanishing at $\a$ encapsulates this property. We denote this variety
 by $E_2\subset \PP^{\nu-1}$.  After verifying that $E_2$ is a proper subvariety, 
our construction is completed by taking $E=E_1\cup E_2$.

Let $V\subset\PP^{\nu-1}$ denote the quadric $q_0=0$. Then $\nu \geq 3$ and 
$V$ has rank $r_V\geq 2$, on
identifying the rank of a quadric with the rank of the underlying quadratic form.   
To prove that $E_2\neq \PP^{\nu-1}$ it suffices to show that for generic hyperplanes $H$ in $ \PP^{\nu-1}$ the intersection $V\cap H$ produces a quadric with rank at least $2$. 
Thus we need to know some elementary facts about 
how the ranks of quadratic forms  diminish on linear subspaces.
For a hyperplane $H$ let $W=V\cap H$ and let $r_W$ be the associated rank. 
It is well-known that $r_W\geq r_V-2$.  
We will need slightly finer information (see Swinnerton-Dyer \cite[p.~264]{swd}, for example).
If $r_V=\nu$  then 
$r_W=
\nu-1$ if $H$ is not tangent to $V$. 
If $r_V<\nu$
then the singular points of $V$ form a linear space $L$ of dimension $\nu-r_V-1$ and we have 
$r_W=
r_V$ if $L\not\subset H$.
In either case we  deduce that $r_W\geq 2$ for generic $H$.
This concludes the proof of Theorem~\ref{thm:3}.

\subsection{Polynomial congruences modulo prime powers}\label{s:poly}

This section is concerned with counting solutions to certain systems of polynomial congruences modulo prime powers. 
Recalling the definition \eqref{eq:def-rho} of $\rho(\ell)$, 
we begin by establishing \Hyp~for the non-singular forms  $f\in \ZZ[X_1,\dots,X_n]$ that feature in Theorems \ref{thm:2large}, \ref{thm:2} and \ref{thm:1}, when $Y\subset \AA^n$ is the affine quadric \eqref{eq:green}. (Note that a linear form is automatically non-singular.)

\begin{lemma}\label{lem:rho'}
Let  $f\in \ZZ[X_1,\dots,X_n]$ be a  non-singular form of degree $d\geq 1$.
Then we have
$
\rho(p^r)\ll_r p^{r(n-2)}.
$
\end{lemma}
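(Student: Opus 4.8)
The plan is to show that, apart from finitely many primes, the subscheme $Z=Y\cap\{f=0\}\subset\AA^n$ cut out by the two polynomials $Q-m$ and $f$ is smooth of dimension $n-2$ over $\FF_p$, and then to conclude by Hensel's lemma together with the Lang--Weil estimate; the identity $\rho(p^r)=\#Z(\ZZ/p^r\ZZ)$ is the starting point. First I would dispose of the bad primes: there is a fixed nonzero integer $D$, depending only on $Q$, $m$, $f$, so that for $p\nmid D$ the prime $p$ is odd, $p\nmid m$, the form $Q$ is non-degenerate modulo $p$, and the projective hypersurface $\{f=0\}\subset\PP^{n-1}$ has good reduction at $p$. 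For the finitely many $p\mid D$ it suffices to use the trivial bound $\rho(p^r)\le p^{rn}=p^{2r}\cdot p^{r(n-2)}\le D^{2r}p^{r(n-2)}$, which is of the required shape since the implied constant depends only on $r$ (and on $Q$, $m$, $f$).

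The crux is to prove that, for $p\nmid D$ and every $\bar\x\in Z(\bar\FF_p)$, the $2\times n$ Jacobian matrix with rows $\nabla Q(\bar\x)$ and $\nabla f(\bar\x)$ has rank $2$ — equivalently, that $\{Q=m\}$ and $\{f=0\}$ meet transversally at $\bar\x$. This is where homogeneity of $f$ and the hypothesis $m\ne0$ are used. Since $Q(\bar\x)=m\ne0$, the point $\bar\x$ is nonzero; non-degeneracy of $Q$ (and $p$ odd) then forces $\nabla Q(\bar\x)\ne\0$, and good reduction of $\{f=0\}$ forces $\nabla f(\bar\x)\ne\0$ because the affine cone over a smooth hypersurface is smooth away from the origin. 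Euler's identity gives $\bar\x\cdot\nabla f(\bar\x)=d\,f(\bar\x)=0$, whereas $\bar\x\cdot\nabla Q(\bar\x)=2Q(\bar\x)=2m\ne0$; so a relation $\alpha\nabla Q(\bar\x)+\beta\nabla f(\bar\x)=\0$, paired with $\bar\x$, yields $2m\alpha=0$, hence $\alpha=0$, hence $\beta\nabla f(\bar\x)=\0$, hence $\beta=0$. Thus the gradients are linearly independent and $Z$ is smooth of dimension $n-2$ at $\bar\x$. I would also note that $Z$ is a proper closed subvariety of the (geometrically) irreducible variety $Y$, since $Q-m$ cannot divide the nonzero form $f$ for degree reasons; hence $\dim Z\le n-2$.

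With transversality established, the count follows the usual Hensel pattern. A solution of $Q\equiv m$, $f\equiv 0$ modulo $p^k$ (with $k\ge1$) reducing to a fixed $\bar\x\in Z(\FF_p)$ has exactly $p^{n-2}$ lifts modulo $p^{k+1}$: these lifts are the solutions of an inhomogeneous linear system in $n$ variables over $\FF_p$ whose coefficient matrix is the rank-$2$ Jacobian at $\bar\x$, and a system of full row rank is automatically consistent. Iterating yields $p^{(r-1)(n-2)}$ lifts of each $\bar\x$, so $\rho(p^r)=p^{(r-1)(n-2)}\,\#Z(\FF_p)$; combined with $\#Z(\FF_p)\ll_{d,n} p^{n-2}$ (only the Lang--Weil upper bound is needed, and it follows from $\dim Z\le n-2$) this gives $\rho(p^r)\ll_{d,n} p^{r(n-2)}$, and adding back the bad primes completes the proof.

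The single real obstacle is the transversality claim. Dropping it, one would be left estimating the contribution of the locus where $\nabla Q$ and $\nabla f$ become proportional — the singular locus of $Z$ — and a crude bound there is not good enough once $r\ge3$; the Euler-relation argument shows that for a non-singular \emph{form} $f$ (as opposed to an arbitrary smooth affine hypersurface) and $m\ne0$ this locus is in fact empty for good $p$, which is exactly what makes the clean bound possible.
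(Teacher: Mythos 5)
Your proof is correct and follows essentially the same route as the paper's: discard the primes dividing $2dm\Delta_f\det(Q)$ via the trivial bound (absorbing them into the $r$-dependent constant), use Euler's identity together with $p\nmid 2m$ and the non-singularity of $f$ and $Q$ modulo $p$ to show the two gradients are linearly independent at every point of $Z(\bar\FF_p)$, then combine Hensel lifting ($\rho(p^{r+1})=p^{n-2}\rho(p^r)$) with Lang--Weil for the base case $r=1$. The only cosmetic difference is that you package the gradient computation as smoothness of the scheme $Z$ before lifting, whereas the paper runs the same Euler-identity argument directly inside the lifting step.
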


\begin{proof}
The trivial bound is 
$
\rho(p^r)\leq p^{rn}.
$
Since we allow our implied constant 
 to depend on $r$, we may henceforth assume that 
$
p\nmid 2dm\Delta_f\det(Q),
$ 
where $\Delta_f$ is the discriminant of $f$.

When $r=1$ 
it follows from the Lang--Weil estimate that $\rho(p)=O(p^{n-2})$. When $r\geq 1$ 
 the statement of the lemma will follow provided we can show that 
$$
\rho(p^{r+1})=p^{n-2}\rho(p^r).
$$
To verify this we use an approach based on Hensel's lemma. 
Let $\x \bmod{p^{r}}$ be counted by $\rho(p^r)$ and consider the vectors 
$\x+p^r\y$ for $\y\bmod{p}$. Such a vector 
runs modulo $p^{r+1}$ and is counted by $\rho(p^{r+1})$ if and only if 
\begin{align*}
p^{-r}f(\x)+\y.\nabla f(\x)&\equiv 0\bmod{p}, \\ 
p^{-r}(Q(\x)-m)+\y.\nabla Q(\x)&\equiv 0 \bmod{p}.
\end{align*}
We claim that 
$\nabla f(\x)$ and $\nabla Q(\x)$ are not proportional modulo $p$, from which it will follow
that  there are  $p^{n-2}$
possibilities for $\y \bmod{p}$, as required. Suppose for a contradiction  that
there exists $\lambda,\mu\in \FF_p$, not both zero,  such that 
$\lambda \nabla f(\x)\equiv \mu \nabla Q(\x)\bmod{p}$. Since $p\nmid m$  we must have $p\nmid \x$. In particular, $\lambda\mu\neq 0$ since $f$ and $Q$ are non-singular modulo $p$. 
It then follows from Euler's identity that 
\begin{align*}
0&\equiv d \lambda f(\x) \bmod{p} \\
&\equiv \lambda\x.\nabla f (\x) \bmod{p}\\
&\equiv  \mu \x.\nabla Q(\x)  \bmod{p}\\
&\equiv 2 \mu m  \bmod{p},
\end{align*}
which is a contradiction. 
\end{proof}

%We begin by recording the following well-known bound, whose proof we include for completeness.
%\begin{lemma}\label{lem:daniel}
%Let $p$ be a prime and let 
% $f\in \ZZ[x]$ be a polynomial of degree $d\geq 2$ whose content 
%is coprime to $p$.
%Then for any $r\in \NN$ we have 
%$$
%\#\{n\bmod{p^r}: f(n)\equiv 0\bmod{p^r}\}\leq 4d^3 p^{(1-1/d)r}.
%$$
%\end{lemma}
%\begin{proof}
%Suppose that $f(x)=c_0x^d+\dots +c_d$. We may assume without loss of generality that $d\geq 2$ and $p\nmid (c_0,\dots,c_{d-1})$.  The cardinality  in which we are interested is equal to
%\begin{align*}
%\frac{1}{p^r}\sum_{a \bmod{p^r}} &\sum_{n\bmod{p^r}} e\left(\frac{af(n)}{p^r}\right)
%=\sum_{s=0}^r\frac{1}{p^s}\sum_{\substack{a \bmod{p^s}\\  \gcd(a,p^s)=1}} \sum_{n\bmod{p^s}} e\left(\frac{af(n)}{p^s}\right).
%\end{align*}
%Hua's estimate (see Vaughan \cite[Thm.~7.1]{vaughan}, for example) shows that the inner sum over $n$ is at most $d^3p^{(1-1/d)s}$. The conclusion of the lemma now follows easily on summing over $a$ and $s$.
%\end{proof}

When $n=3$ it turns out that we shall also need a good bound for
$$\rho(\ell;\c)=
\#\left\{
\x\in Y(\ZZ/\ell\ZZ): 
f(\x)\equiv 0 \bmod{\ell},~
 \c.\x\equiv 0\bmod{\ell}
\right\},
$$
for any $\ell\in \NN$ and $\c\in \ZZ^n$.
Note that 
$\rho(\ell)=\rho(\ell;\mathbf{0})$.
The quantity $\rho(\ell;\c)$ is a multiplicative function of $\ell$ and our next  result is concerned with estimating it when $\ell=p^r$.

\begin{lemma}\label{lem:rho''}
Let $n=3$
and let  $f\in \ZZ[X_1,\dots,X_n]$ be a  non-singular form of degree $d$.
Then  we have
$
\rho(p^r;\c)\ll_r p^{(1-1/d)r} \gcd(p^r,\c)^3.
$
\end{lemma}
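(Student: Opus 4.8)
The plan is to reduce the problem to a one–variable $p$-adic congruence whose effective degree is controlled by a simple observation about lines through the origin.

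\emph{Removing the $\gcd$.} Write $\gcd(p^r,\c)=p^a$. If $a=r$ the congruence $\c\cdot\x\equiv 0\bmod{p^r}$ is vacuous, so $\rho(p^r;\c)=\rho(p^r)\ll_r p^r$ by Lemma~\ref{lem:rho'} with $n=3$, which is dominated by $p^{(1-1/d)r}\gcd(p^r,\c)^3$. If $a<r$, write $\c=p^a\c_0$ with $p\nmid\c_0$; then $\c\cdot\x\equiv 0\bmod{p^r}\iff\c_0\cdot\x\equiv 0\bmod{p^{r-a}}$, and reducing $\x\bmod{p^r}$ to $\x\bmod{p^{r-a}}$ (fibres of size $p^{3a}$) gives $\rho(p^r;\c)\le p^{3a}\rho(p^{r-a};\c_0)$. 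Hence it suffices to bound $\rho(p^s;\c)$ by $O_s(p^{(1-1/d)s})$ whenever $p\nmid\c$; and since for the finitely many $p$ dividing a fixed integer depending only on $Q$ and $f$ the trivial bound $\rho(p^s;\c)\le p^{3s}$ already does this, we may assume $p$ large, in particular $p\nmid 2m$ and $f\bmod{p}$ absolutely irreducible. (For $d=1$ the argument below still works provided $f$ is not proportional to $\c$, which is the only situation relevant for Theorem~\ref{thm:1}; so assume $d\ge 2$.)

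\emph{Eliminating a variable and locating the solutions.} After permuting coordinates assume $p\nmid c_1$, and fix $\v_2,\v_3\in\ZZ^3$ spanning $W=\{\c\cdot\x=0\}$; since $p\nmid c_1$ the map $(x_2,x_3)\mapsto x_2\v_2+x_3\v_3$ is a bijection of $(\ZZ/p^s\ZZ)^2$ onto $\{\x\bmod{p^s}:\c\cdot\x\equiv 0\}$, so with $\phi=f|_W$ and $\psi=Q|_W$ (binary forms of degrees $d$ and $2$) we have
$$
\rho(p^s;\c)=\#\{(x_2,x_3)\bmod{p^s}:\phi\equiv 0,\ \psi\equiv m\bmod{p^s}\}.
$$
For $p$ large $f\bmod{p}$ has no linear factor, so $\phi\not\equiv 0\bmod{p}$; and if $\psi\equiv 0\bmod{p}$ then $\psi\equiv m\bmod{p}$ is insoluble and $\rho(p^s;\c)=0$, so assume $\psi\not\equiv 0\bmod{p}$. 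Then $\{\psi=m\}\subset\AA^2_{\ZZ_p}$ is smooth over $\ZZ_p$, since any non-smooth point lies in the kernel of the Gram matrix of $\psi$ modulo $p$, where $\psi\equiv 0\not\equiv m$. Every solution counted by $\rho(p^s;\c)$ reduces mod $p$ to one of at most $2d$ points $\bar P\in\{\phi\equiv 0\}\cap\{\psi\equiv m\}$ (no common component: the lines of $\{\phi=0\}$ pass through the origin, whereas $\{\psi=m\}$ does not).

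\emph{The geometric input.} Near each such $\bar P$, smoothness of $\{\psi=m\}$ lets us write the branch as a graph $x_3=g_{\bar P}(x_2)$ with $g_{\bar P}$ a $\ZZ_p$-analytic function on the residue disc of $\bar P$, and the solutions reducing to $\bar P$ are exactly those with $G_{\bar P}(x_2):=\phi(x_2,g_{\bar P}(x_2))\equiv 0\bmod{p^s}$. By $p$-adic Weierstrass preparation, $G_{\bar P}$ is a unit times $p^{c}$ times a monic polynomial of degree $e_{\bar P}=I_{\bar P}(\{\phi=0\},\{\psi=m\})$. The key point is that $e_{\bar P}\le d$: writing $\phi=\prod_j\ell_j^{m_j}$ over $\overline{\FF}_p$ with $\sum_j m_j=d$, each $\ell_j$ cuts out a line through the origin, and by Euler's identity a tangent line to $\{\psi=m\}$ at a smooth point $Z$ satisfies $\nabla\psi(Z)\cdot Z=2\psi(Z)=2m\ne 0$, so no tangent line of $\{\psi=m\}$ passes through the origin; hence $I_{\bar P}(\{\ell_j=0\},\{\psi=m\})\le 1$ for every $j$, and $e_{\bar P}=\sum_j m_j\,I_{\bar P}(\{\ell_j=0\},\{\psi=m\})\le d$.

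\emph{Conclusion and the main difficulty.} Feeding the Weierstrass polynomial — primitive, of degree $e_{\bar P}\le d$ — into the standard bound $\#\{x\bmod{p^s}:h(x)\equiv 0\bmod{p^s}\}\ll_D p^{(1-1/D)s}$ for $h$ of degree $D$ with a unit coefficient, and tracking the modulus shifts caused by the factor $p^{c}$ and by the passage from the residue disc of $\bar P$ to $x_2\bmod{p^s}$, one finds each $\bar P$ contributes $\ll_d p^{(1-1/d)s}$; summing over the $O_d(1)$ points $\bar P$ gives $\rho(p^s;\c)\ll_d p^{(1-1/d)s}$, as required. The one step needing genuine care is this last bookkeeping: the exponent $c$ in the Weierstrass factorisation must be controlled uniformly in $\c$, $s$ and $\bar P$ — it is, essentially because a zero of multiplicity $e$ of $G_{\bar P}$ at a point congruent to $\bar P$ forces $c=e$, and then $p^{e}(x_2-\alpha)^{e}\equiv 0\bmod{p^s}$ has $\approx p^{(1-1/e)s}$ solutions — so that the one–variable estimate actually delivers the exponent $1-1/d$ rather than the weaker $1-1/(2d)$ one would get from a naive resultant argument.
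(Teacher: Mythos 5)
Your reduction steps coincide with the paper's: the removal of $\gcd(p^r,\c)$ via $\rho(p^r;\c)\le p^{3a}\rho(p^{r-a};\c_0)$ is exactly the argument in the text, and your parametrisation of $\{\c\cdot\x\equiv 0\}$ by a basis of the plane is the same move as the paper's change of variables $\y=(x_1,x_2,\c\cdot\x)$. Where you genuinely diverge is the core one-variable count. The paper substitutes $y_1=zy_2$ (using $p\nmid(y_1,y_2)$, forced by $Q\equiv m$), reduces to $g(z)\equiv 0\bmod{p^r}$ for a degree-$\le d$ polynomial $g$ not vanishing identically mod $p$, and simply cites Stewart \cite[Cor.~2]{stewart} for the bound $O(p^{(1-1/d)r})$, with at most two choices of $y_2$ per root. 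You instead slice along the smooth curve $\{Q|_W=m\}$ via Hensel, and bound the local Weierstrass degree by the intersection multiplicity, the pleasant geometric input being that no tangent line of $\{\psi=m\}$ passes through the origin (Euler's identity), so each line of $\{\phi=0\}$ meets it transversally and $e_{\bar P}\le\sum_j m_j=d$. Your route is self-contained (no appeal to Stewart) and makes the source of the exponent $1-1/d$ visibly geometric; the paper's is shorter because the multiplicity bookkeeping is outsourced to the cited result.

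The one place your write-up does not close is the step you yourself flag, and the heuristic you offer for it is wrong as stated: a factorisation $G_{\bar P}=p^{e}u(x)(x-\alpha)^{e}$ would give $v_p(x-\alpha)\ge s/e-1$, hence about $p\cdot p^{(1-1/e)s}$ solutions mod $p^s$, an unacceptable loss of a factor $p$ when $e=d$. The correct repair is to avoid rescaling to the variable $t=(x_2-\bar P_2)/p$ altogether: apply Weierstrass preparation to $G_{\bar P}$ as a convergent series on the residue disc in the original coordinate. Its content is then automatically a unit, because $G_{\bar P}\bmod{p}=\bar\phi(x_2,\bar g(x_2))$ is not identically zero on the disc (the lines of $\{\bar\phi=0\}$ pass through the origin, which does not lie on $\{\bar\psi=\bar m\}$), and its Weierstrass degree equals $\operatorname{ord}_{\bar P_2}\bigl(\bar\phi(x_2,\bar g(x_2))\bigr)=e_{\bar P}\le d$. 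Thus $G_{\bar P}=u\cdot P$ with $u$ a unit and $P$ monic of degree $e_{\bar P}$ with all roots $\alpha_i$ in the disc, so $G_{\bar P}(x_2)\equiv 0\bmod{p^s}$ forces $v_p(x_2-\alpha_i)\ge s/e_{\bar P}$ for some $i$, giving at most $e_{\bar P}\,p^{\,s-\lceil s/e_{\bar P}\rceil}\le d\,p^{(1-1/d)s}$ values of $x_2\bmod{p^s}$; summing over the $\le 2d$ points $\bar P$ completes the proof with constant $O(d^2)$, independent of $r$ for the good primes. With that patch your argument is a complete and valid alternative to the paper's. (One caveat on scope: as you note, the statement is false for $d=1$ when $f$ is proportional to $\c$; the paper only invokes Lemma \ref{lem:rho''} for $d\ge 2$, and Theorem \ref{thm:1} does not use it at all, so restricting to $d\ge 2$ is harmless.)
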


\begin{proof}
As in the proof of Lemma \ref{lem:rho'}, we may proceed under 
the assumption that 
$
p\nmid 2dm\Delta_f \det(Q).
$ 
In particular $f$ is non-singular modulo $p$ and does not vanish identically on the linear form $\c.\x$ modulo $p$.

We begin by proving the result under the assumption that  $p\nmid \c$,  analysing 
$\rho(p^r;\c)$ via 
the non-singular change of variables 
$\y=(x_1,x_2,\c.\x)$. 
Assuming without loss of generality that $p\nmid c_3$, we find that 
$\rho(p^r;\c)$ is equal to
$$
\#\left\{
(y_1,y_2)\bmod{p^r}: \begin{array}{l}
Q(c_3y_1,c_3y_2,-c_1y_1-c_2y_2)\equiv c_3^2m \bmod{p^r}\\
f(c_3y_1,c_3y_2,-c_1y_1-c_2y_2)\equiv 0 \bmod{p^r}
 \end{array}\right\}.
$$
There is no contribution from $y_1,y_2$ for which $p\mid (y_1,y_2)$. Suppose without loss of generality that $p\nmid y_2$. We make the further change of variables $y_1=zy_2$, now finding that the contribution to $\rho(p^r;\c)$ is
$$
\#\left\{
(y_2,z)\bmod{p^r}: p\nmid y_2, ~ y_2^2h(z)\equiv c_3^2m\bmod{p^r}, ~
g(z)\equiv 0\bmod{p^r}
\right\},$$
where
$$
h(z)=Q(c_3z,c_3,-c_1z-c_2) \quad \text{ and } \quad 
g(z)=f(c_3z,c_3,-c_1z-c_2). 
$$
Here $g(z)$ is a polynomial of degree at most $d$ which does not vanish identically modulo $p$. Moreover we are only interested in roots of $g(z)$ modulo $p^r$ for which $p\nmid h(z)$.  It follows from 
work of Stewart \cite[Cor.~2]{stewart}
that the number of $z\bmod{p^r}$ is 
$O(p^{(1-1/d)r})$. For given $z$ there are then at most $2$ available choices for $y_2$, 
which therefore completes the proof of the lemma when $p\nmid \c$.

Suppose now that $p^j\|\c$. If $j\geq r$ then we 
get a satisfactory bound for the lemma by taking  the trivial bound
$\rho(p^r;\c)\leq p^{3r}= \gcd(p^r,\c)^3$.
Alternatively, if $j<r$ we write $\x=\u+p^{r-j}\v$ for $\u \bmod{p^{r-j}}$ and $\v \bmod{p^j}$.
The number of $\u$ is precisely $\rho(p^{r-j};\tilde \c)$, where $\tilde\c=p^{-j}\c$.
The number of $\v$ is trivially at most $p^{3j}$.
Hence we have 
$
\rho(p^r;\c)\leq p^{3j}\rho(p^{r-j};\tilde \c).
$
Applying our earlier bound for the case $p\nmid \c$, we therefore complete the proof of the lemma.
\end{proof}

\subsection{Summary of the argument}\label{s:onion}
It is now time to   survey the key steps in the
proof of Theorems \ref{thm:2large}, \ref{thm:2} and \ref{thm:1}. Let $r\geq 2$ and let 
$f\in \ZZ[X_1,\ldots,X_n]$ be a non-singular form of degree $d\geq 1$. 
Recalling the counting function 
$N_r(Y,f;H)$ from   \eqref{eq:london0}, we let $\Delta>0$ and consider the contributions $N^{(1)}(H)$ and $N^{(2)}(H)$ that were defined in 
\eqref{eq:n1} and \eqref{eq:leek2}, respectively. 
In view of 
Remark \ref{r:quad}, 
we see that the treatment of  
$N^{(1)}(H)$ (i.e.\ the 
small moduli) is handled by Proposition \ref{lem:dynamics} and \eqref{eq:final0}, in which we can take 
$\mu_\infty(Y;H) \ll H^{n-2}$
and $\mu_\infty(\mathcal{O}_\infty(H)) \ll H^{n-2}$,  $\dim(G)=n(n-1)/2$ and $\dim(L)=(n-1)(n-2)/2$.

When $n\geq 4$,   $L$ is simply connected and it   follows
from Proposition \ref{lem:dynamics}
that there exists $\delta>0$  such that 
\begin{align*}
N^{(1)}(H)=~&\mathfrak{S}(Y,f,r) \mu_{\infty}(Y;H)\\
& +
O\left(H^{n-2}\left\{
H^{-\Delta/2}+H^{\Delta(1+ r(\frac{3}{2}n^2-\frac{3}{2}n+1)-\delta}
\right\}
\right).
\end{align*}
Moreover, for $n=3$, there exists $\delta>0$ such that 
\begin{align*}
N^{(1)}(H)=&
\sum_{\mathcal{O}_{\A}\subset Y(\A)}\delta(\mathcal{O}_{\A})\mathfrak{S}(\mathcal{O}_{\A},f,r) \mu_{\infty}(\mathcal{O}_{\A};H)\\
&+
O\left(H^{1-\Delta/2}+H^{1+\Delta(1+ 10r)-\delta}
\right),
\end{align*}
where the sum is taken over finitely many orbits $\mathcal{O}_{\A}$
that have non-trivial intersection with $Y(\RR)\times \prod_{p<\infty} Y(\ZZ_p)$.
Our additional assumption on existence of $r$-free points (when $n=3$)
guarantees that for at least one of the orbits $\mathcal{O}_{\A}$ we have
$\delta(\mathcal{O}_{\A})>0$ and $\mathfrak{S}(\mathcal{O}_{\A},f,r)>0$.
On taking $\Delta>0$ to be sufficiently small in terms of $\delta$ we can ensure that these error terms are all satisfactory from the point of view of Theorems \ref{thm:2large},  \ref{thm:2} and \ref{thm:1}.

For  given $\Delta>0$, it remains to show that 
there exists  $\eta>0$, depending on $\Delta, r, d$ and $n$, such that 
\begin{equation}\label{eq:magic}
N^{(2)}(H)\ll  H^{n-2-\eta}.
\end{equation}
We shall do so provided that $f$ is a non-singular form of degree $d\geq 1$,  with $r$ satisfying the lower bounds from Theorems \ref{thm:2large}, \ref{thm:2} or \ref{thm:1}, which will thereby  suffice to conclude their proof.
For  
$\ell\in \NN$, 
the estimation of $N^{(2)}(H)$ hinges upon good upper bounds for
\begin{equation}
  \label{eq:N(X)}
U_\ell(H)=\card \{\x\in Y(\ZZ): |\x|\leq H, ~ 0\neq f(\x)\equiv 0\bmod{\ell}
\},
\end{equation}
The following result summarises our treatment of $U_{\ell}(H)$ when $d=1$.

\begin{prop}\label{lem:N(X)}
Suppose that $f\in \ZZ[X_1,\dots,X_n]$ is a linear form.
Let $\ve>0$ and let $\ell\in \NN$.
 Then 
$
U_\ell(H)=O_\ve(\ell^{-1} H^{n-2+\ve}).
$
 \end{prop}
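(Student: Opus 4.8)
The plan is to reduce $U_\ell(H)$ to a sum of point counts on affine quadrics in $n-1$ variables and then invoke Theorem~\ref{thm:3}. Let $g$ be the content of the linear form $f$; after a unimodular change of variables depending only on $f$ I may assume $f(\x)=gx_n$ and that $Q$ is replaced by $\tilde Q=Q\circ T\in\ZZ[X_1,\dots,X_n]$, still non-singular, with $\tilde Q'(X_1,\dots,X_{n-1}):=\tilde Q(X_1,\dots,X_{n-1},0)$ of $\rank\geq n-2$, this being the restriction of a rank-$n$ form to a hyperplane. The condition $\ell\mid f(\x)$, $f(\x)\neq 0$ then reads $\ell_*\mid x_n$, $x_n\neq 0$, where $\ell_*=\ell/\gcd(\ell,g)$ satisfies $\ell\leq g\ell_*$ and $\ell_*\gg_f\ell$. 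Writing $x_n=\ell_*t$ gives, for a suitable $c=c(f)>0$,
\[
U_\ell(H)\;\leq\;\sum_{0<|t|\le cH/\ell_*}M(q_t;cH),\qquad q_t(T_1,\dots,T_{n-1}):=\tilde Q(T_1,\dots,T_{n-1},\ell_*t)-m,
\]
with $M(\cdot;\cdot)$ as in Theorem~\ref{thm:3}. Note that the quadratic part of $q_t$ is $\tilde Q'$, which is independent of $t$, and write $\tilde Q(\x,x_n)=\tilde Q'(\x)+x_nL(\x)+\beta x_n^2$ with $L$ a fixed linear form and $\beta\in\ZZ$.

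Suppose first that $\rank(\tilde Q')\geq 2$; this is automatic for $n\geq 4$, and for $n=3$ it holds unless $\{f=0\}$ is tangent to $\{Q=0\}$. I would then show $q_t$ is irreducible over $\QQ$ for every integer $t\neq 0$, with at most two exceptions, which can occur only when $\ell=O_f(1)$. Indeed, if $q_t=M_1M_2$ with $M_i$ affine-linear over $\QQ$, comparing degree-two parts forces $\rank(\tilde Q')=2$ and a factorisation $\tilde Q'=L_1L_2$ over $\QQ$ with $L_1,L_2$ independent; comparing degree-one and degree-zero parts then forces $L\in\operatorname{span}(L_1,L_2)$, say $L=\gamma_1L_1+\gamma_2L_2$, and $(\ell_*t)^2(\gamma_1\gamma_2-\beta)=-m$. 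Since $\gamma_1\gamma_2$ is well defined and $m\neq 0$, this means $(\ell_*t)^2=c_0$ for a fixed nonzero rational $c_0=c_0(Q,f)$; hence either no $q_t$ is reducible, or $c_0\in\ZZ_{>0}$ is a perfect square, $\ell_*\mid\sqrt{c_0}$ and $t=\pm\sqrt{c_0}/\ell_*$, in which case $\ell\leq g\ell_*\leq g\sqrt{c_0}=O_f(1)$, and these at most two reducible slices contribute $O(H^{n-2})=O_f(\ell^{-1}H^{n-2})$ (the zero locus being a union of at most two hyperplanes). For every non-exceptional $t$, Theorem~\ref{thm:3} with $\nu=n-1$ gives $M(q_t;cH)=O_\ve(H^{n-3+\ve})$, and summing over the $\ll H/\ell_*$ admissible $t$ yields $O_\ve(\ell_*^{-1}H^{n-2+\ve})=O_\ve(\ell^{-1}H^{n-2+\ve})$.

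It remains to treat $n=3$ with $\rank(\tilde Q')=1$, i.e.\ $\{f=0\}$ tangent to $\{Q=0\}$, where Theorem~\ref{thm:3} does not apply. Here a further unimodular change brings $\tilde Q$ to the shape $\mu X_1^2+X_3(\lambda X_1+\kappa X_2)+\beta X_3^2$ with $\mu,\kappa\neq 0$ (both forced by non-singularity of $\tilde Q$). For $x_3=\ell_*t\neq 0$ the equation $\tilde Q(\x)=m$ then determines $x_2$ from $(x_1,t)$, and an integral solution requires $\mu x_1^2\equiv m\pmod{\ell_*|t|}$; since $m$ and $\mu$ are fixed, this congruence has $O_\ve((\ell_*|t|)^\ve)$ roots modulo $\ell_*|t|$, so $M(q_t;cH)\ll_\ve(H/(\ell_*|t|)+1)(\ell_*|t|)^\ve$, and summing over $0<|t|\le cH/\ell_*$ gives $U_\ell(H)\ll_\ve\ell_*^{-1}H^{1+\ve}\ll_\ve\ell^{-1}H^{n-2+\ve}$. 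The main obstacle in all of this is the bookkeeping for the reducible and degenerate slices: checking that reducibility of $q_t$ pins $(\ell_*t)^2$ to a fixed value, so that reducible slices vanish once $\ell$ exceeds a constant depending on $Q$ and $f$, and supplying the elementary tangent-case analysis for $n=3$, which lies outside the range of Theorem~\ref{thm:3} and relies on $m$ and the coefficients of $Q$ being fixed in order to bound the number of solutions of the relevant quadratic congruence.
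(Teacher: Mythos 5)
Your argument is correct and follows essentially the same route as the paper: fix the value of the linear form (normalised to a multiple of $X_n$), apply Theorem \ref{thm:3} uniformly to the resulting quadrics in $n-1$ variables, sum over the $O(H/\ell)$ admissible slices, and treat the $n=3$, rank-one (tangent) case separately by counting square roots of a fixed residue modulo $x_3$. The only substantive divergence is in how the hypotheses of Theorem \ref{thm:3} are certified on each slice: for $n\geq 4$ the paper observes that the homogenised slice has rank at least $3$, hence is absolutely irreducible with no exceptional values of $x_n$, whereas you prove irreducibility over $\QQ$ by a direct factorisation analysis, which produces at most two exceptional slices pinned to $(\ell_*t)^2=c_0$ and hence to $\ell=O_f(1)$ — and you handle these correctly.
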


This result will be established in \S \ref{s:large1}.
Applying Proposition \ref{lem:N(X)} with $\ell=k^r$ and $\ve=\Delta/2$, we obtain 
$$
N^{(2)}(H)
\ll H^{n-2+\Delta/2}\sum_{k> H^\Delta} 
\frac{|\mu(k) |}{k^2}
\ll H^{n-2-\Delta/2}.
$$
This is satisfactory for \eqref{eq:magic}.

Estimating  $U_{\ell}(H)$ for $d\geq 2$
is  more difficult.
For 
square-free $k\in \NN$ we deal with this by noting that 
$
U_{k^r}(H)\leq U_{k^{j}}(H),
$
for any $j\leq r$, 
whence
\begin{equation}\label{eq:upper-N}
N^{(2)}(H)
\leq \sum_{H^\Delta<k\ll H^{d/r}} |\mu(k)| 
U_{k^j}(H).
\end{equation}
We will establish the following result in \S \ref{s:large2}.

\begin{prop}\label{lem:N(X)''-4}
Let $n\geq 4$ and 
suppose that $f\in \ZZ[X_1,\dots,X_n]$ is a non-singular
form of degree $d\geq 2$.
Let $\ve>0$ and let $k\in \NN$ be square-free such that $k^{jn/(n-1)}\leq H$.
Then 
$
U_{k^j}(H)=O_{\ve,j}(k^{-j/(n-1)} H^{n-2+\ve}).
$
 \end{prop}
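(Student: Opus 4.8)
The plan is to peel off one residue class at a time, reduce the resulting count to a count of integral points of bounded height on an affine quadric, and invoke Theorem \ref{thm:3}. Since the implied constant may depend on $j$, I would first reduce to the case where every prime dividing $k$ is coprime to $2dm\Delta_f\det(Q)$: writing $k=k_0k'$ with $k_0=\gcd(k,2dm\Delta_f\det(Q))\ll_j 1$ and $\gcd(k',2dm\Delta_f\det(Q))=1$, one has $U_{k^j}(H)\le U_{(k')^j}(H)$ while $(k')^{jn/(n-1)}\le H$. Then I would parametrise $\x$ by its residue $\bxi\in Y(\ZZ/k^j\ZZ)$ with $f(\bxi)\equiv 0\bmod{k^j}$; by Lemma \ref{lem:rho'} and multiplicativity of $\rho$ there are $\rho(k^j)\ll_{j,\ve}k^{j(n-2)+\ve}$ such $\bxi$, and for each we write $\x=\bxi+k^j\z$ with $|\z|\ll H/k^j$. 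Now $k^j\mid f(\x)$ is automatic, and $Q(\x)=m$ becomes $q_{\bxi}(\z):=k^jQ(\z)+\ell_{\bxi}(\z)=0$, where $\ell_{\bxi}$ is an affine-linear form over $\ZZ$ whose linear part $\nabla Q(\bxi)$ is primitive modulo $k^j$ (by Euler's identity and $p\nmid 2m$); since the quadratic part $k^jQ$ has rank $n\ge 3$, $q_{\bxi}$ is absolutely irreducible with $\rank((q_{\bxi})_0)=n\ge 2$. The crucial point is that $q_{\bxi}(\z)=0$ forces $\ell_{\bxi}(\z)=-k^jQ(\z)\equiv 0\bmod{k^j}$, so $\z$ is confined to a fixed coset of the lattice $\Lambda^0_{\bxi}=\{\z\in\ZZ^n:\nabla Q(\bxi)\cdot\z\equiv 0\bmod{k^j}\}$, which has covolume $k^j$ and contains $k^j\ZZ^n$.

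The next step is a geometry-of-numbers argument on $\Lambda^0_{\bxi}$. Fixing a reduced basis $\b_1,\dots,\b_n$ with successive minima $1\le\mu_1\le\dots\le\mu_n\ll k^j$ and $\prod_i\mu_i\asymp k^j$, and a coset representative $\z_0$ with $|\z_0|\ll k^j$, one may write $\z=\z_0+\sum_i v_i\b_i$; either $|\z_0|$ exceeds the box (and this $\bxi$ contributes nothing) or $|v_i|\ll H/(k^j\mu_i)+1$. The equation becomes an absolutely irreducible affine quadric $\hat q(\v)=0$ of full rank $n$, living in a box that is short in the directions of large $\mu_i$. Slicing along the shortest direction $v_n$ and applying Theorem \ref{thm:3} in $n-1$ variables to each slice $\hat q|_{v_n=c}$ — whose quadratic part is $k^jQ$ restricted to an $(n-1)$-dimensional subspace, hence of rank $\ge n-2\ge 2$, and which is irreducible over $\QQ$ for all but $O(1)$ values of $c$ because the projective closure of $\hat q$ has rank $\ge n\ge 4$ so its sections in the pencil $\{v_n=cV_0\}$ have rank $\ge n-1\ge 3$ off a finite set (the exceptional slices being bounded trivially) — yields the per-$\bxi$ bound $\ll(H/(k^j\mu_n)+1)\bigl(H/(k^j\mu_1)\bigr)^{n-3+\ve}$. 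Since the constraints $\mu_1\le\dots\le\mu_n$, $\prod_i\mu_i\asymp k^j$ force $\mu_n\mu_1^{n-3}\gg k^{j/(n-1)}$ and $\mu_n\gg k^{j/n}$, summing the main term over the $\rho(k^j)\ll k^{j(n-2)+\ve}$ residues $\bxi$ gives $\ll k^{-j/(n-1)}H^{n-2+\ve}$, while the contribution of the "$+1$" terms is $\ll k^{j}H^{n-3+\ve}$, which is $\ll k^{-j/(n-1)}H^{n-2+\ve}$ precisely because $k^{jn/(n-1)}\le H$. This is the role played by the hypothesis.

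The main obstacle is the uniform bookkeeping: one must track the successive minima of $\Lambda^0_{\bxi}$, the size of coset representatives, and the attendant box distortion entirely uniformly in $\bxi$ and $k$, and must verify the elementary extremal inequality $\mu_n\mu_1^{n-3}\gg k^{j/(n-1)}$ as well as the (routine but fiddly) claim that only $O(1)$ of the hyperplane slices fail to be absolutely irreducible, treating those separately. None of this is conceptually deep, but it is exactly the kind of inductive slicing on affine quadrics developed in \cite[\S\S 4--5]{pila} and used to prove Theorem \ref{thm:3}, so I would follow that template closely.
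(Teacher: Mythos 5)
Your proposal follows the same route as the paper's: reduce modulo $k^j$ and use Lemma \ref{lem:rho'} to count the $\ll k^{j(n-2)+\ve}$ residue classes; observe that the gradient congruence $\nabla Q(\bxi)\cdot\z\equiv 0\bmod{k^j}$ confines the translated points to a lattice of determinant $\asymp k^j$ (primitivity of $\nabla Q(\bxi)$ via Euler's identity appears verbatim in the paper); pass to a minimal basis, slice along the shortest direction, apply Theorem \ref{thm:3} to each slice, and extract the saving from the inequality $\mu_n\mu_1^{n-3}\gg k^{j/(n-1)}$, with the hypothesis $k^{jn/(n-1)}\le H$ absorbing the boundary terms. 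All of that matches the paper's Proposition \ref{prop:7.1} and the deduction following it, and your bookkeeping of the main and ``$+1$'' contributions is correct. (A cosmetic difference: the paper recentres at an actual integral point $\x_0\in Y(\ZZ)$ in the residue class rather than at the residue representative, which removes the coset shift and the constant term; this changes nothing essential.)

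There is, however, one genuine gap: your treatment of the exceptional slices. You propose to bound the $O(1)$ slices $\{v_n=c\}$ on which the quadric becomes reducible ``trivially''. The trivial bound for a reducible slice is the number of lattice points on a hyperplane in the box, which in the worst case (e.g.\ $\mu_1=\dots=\mu_{n-2}\asymp 1$) is of order $(H/k^j)^{n-2}$; summed over the $\asymp k^{j(n-2)}$ residue classes this gives $O(H^{n-2})$, with no power of $k$ saved, so it does not yield the claimed $k^{-j/(n-1)}H^{n-2+\ve}$. The step needs a real argument, and two are available. The paper's route: before slicing, excise the solutions with $\y\cdot\nabla Q(\x_0)=0$ (these lie on a non-singular quadric in $n-1$ variables and are counted directly by Theorem \ref{thm:3}, contributing $\ll (H/\ell)^{n-3+\ve}$ per class), and then verify that \emph{every} remaining slice is absolutely irreducible with quadratic part of rank $\ge 2$, by computing that its homogenisation has rank $\ge n-1\ge 3$. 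Alternatively, one can show your exceptional slices simply do not exist: a reducible slice would force $Y$ to contain an $(n-2)$-dimensional affine linear subspace over $\overline{\QQ}$, whereas the projective closure $\{Q(\XX)=mX_0^2\}\subset\PP^n$ is a non-singular quadric (as $m\det(Q)\ne 0$) and so contains no linear subspace of dimension exceeding $\lfloor (n-1)/2\rfloor<n-2$ for $n\ge 4$. Either repair closes the argument; as written, the proposal does not.
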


This allows us to establish \eqref{eq:magic} when $n\geq 4$.
We 
make the assumption that $j$ and 
$r$ are chosen so that 
\begin{equation}\label{eq:j1}
\frac{jdn}{(n-1)r}\leq1.
\end{equation}
Then it will follow that $k^{jn/(n-1)}\ll H$
in \eqref{eq:upper-N}, since
$k\ll H^{d/r}$. 
Hence  Proposition~\ref{lem:N(X)''-4} yields 
$$
N^{(2)}(H)
\ll_{\ve,j} H^{n-2+\ve}\sum_{k> H^\Delta} 
\frac{|\mu(k) |}{k^{j/(n-1)}},
$$
for any $\ve>0$.
Here the exponent of $k$  exceeds $1$ if and only if 
$j>n-1$.
We choose
$j=n$, with which choice we can conclude that \eqref{eq:magic} holds
for any  $\eta<\Delta/(n-1)$,
provided that $r$ satisfies \eqref{eq:j1} with $j=n$. But this is equivalent to 
 $r\geq dn^2/(n-1)$, which was one of the assumptions in Theorem 
 \ref{thm:2large}.

When $n=3$ we are not able to get such a good bound for $U_{k^j}(H)$.
The following result will also be established in \S \ref{s:large2}.

\begin{prop}\label{lem:N(X)''}
Let $n=3$ and 
suppose that $f\in \ZZ[X_1,X_2,X_3]$ is a non-singular
form of degree $d\geq 2$.
Let $\ve>0$ and let $k\in \NN$ be square-free such that $k^{4j/3}\leq H$.
Then 
$
U_{k^j}(H)=O_{\ve,j}(k^{-j/(3d)} H^{1+\ve}).
$
\end{prop}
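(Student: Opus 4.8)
The plan is to bound $U_{k^j}(H)$ by transforming it into a count of integral points on a family of affine quadric surfaces carrying an extra linear congruence, so that Lemma~\ref{lem:rho''} governs the number of residue classes while Theorem~\ref{thm:3} (with $\nu=3$) governs the number of points in each class. The device that produces the extra congruence is elementary geometry of numbers. Fix $\x$ counted by $U_{k^j}(H)$. Since $k$ is square-free and $Q(\x)=m$, any prime $p\mid\gcd(\x,k)$ satisfies $p^2\mid m$, so $\gcd(\x,k)=O(1)$ and the lattice $\Lambda_\x=\{\c\in\ZZ^3:\c\cdot\x\equiv 0\bmod{k^j}\}$ has covolume $\asymp k^j$. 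By Minkowski's theorem there is a nonzero $\c\in\Lambda_\x$ with $|\c|\ll k^{j/3}$; after dividing out its content we obtain a primitive $\c_0$ with $|\c_0|\ll k^{j/3}$ together with a congruence $\c_0\cdot\x\equiv 0\bmod{\ell}$ for some divisor $\ell\mid k^j$ satisfying $\ell\gg k^{2j/3}$ (the loss $k^j/\ell$ being at most the content removed).

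With this in hand I would first fix $\c_0$ and $\ell$ and count the $\x$ attached to them. Any such $\x$ reduces modulo $\ell$ to a class $\bxi$ with $Q(\bxi)\equiv m$, $f(\bxi)\equiv 0$ and $\c_0\cdot\bxi\equiv 0$ modulo $\ell$; writing $\ell=\prod_p p^{a_p}$ with each $a_p\le j$ and multiplying the estimate of Lemma~\ref{lem:rho''} over $p\mid\ell$, the number of such classes is $O_{\ve,j}(\ell^{1-1/d+\ve})$ (the $j$-dependent constants in that lemma contribute only $\ell^\ve$, since $\omega(k)\log\log k=o(\log k)$ and $k\le\ell^{3/(2j)}$). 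Within each class, writing $\x=\bxi+\ell\y$ turns $Q(\x)=m$ into $q_{\bxi}(\y)=0$, where $q_{\bxi}\in\ZZ[Y_1,Y_2,Y_3]$ has quadratic part $\ell\,Q$, of rank $3$; hence the homogenisation of $q_{\bxi}$ has rank $\ge 3$, so $q_{\bxi}$ is absolutely irreducible and Theorem~\ref{thm:3} yields $O_\ve((H/\ell)^{1+\ve})$ admissible $\y$ with $|\y|\ll H/\ell$ (here a lift $|\bxi|\le\ell\le k^j\le H^{3/4}$ keeps the box non-trivial, which is where the hypothesis $k^{4j/3}\le H$ enters). Thus the pair $(\c_0,\ell)$ contributes $O_{\ve,j}(\ell^{-1/d}H^{1+\ve})=O_{\ve,j}(k^{-2j/(3d)+\ve}H^{1+\ve})$, and one disposes of imprimitive $\c_0$ and of singular fibres $q_{\bxi}$ through the $\gcd$-factor in Lemma~\ref{lem:rho''} together with a lower-dimensional estimate.

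The main obstacle is the summation over the auxiliary data $(\c_0,\ell)$: the per-pair bound above, summed naively over the $\asymp k^j$ admissible $\c_0$, over-counts disastrously, and the gain has to be protected. The remedy, carried out along the lines of \cite[\S\S4--5]{pila}, is to retain the fact that $\c_0$ may be taken to be the \emph{shortest} vector of $\Lambda_\x$ up to content: the $\x$ for which $\Lambda_\x$ is ``balanced'' are then attached to only $k^{\ve}$ pairs each and their $\c_0$ range over a single dyadic shell, whereas the remaining ``degenerate'' $\x$ (those for which $\Lambda_\x$ has an atypically short vector, forcing some coordinate of $\x$ to be divisible by a large power of $k$) form a sparse set to be treated separately and more favourably; one may also need to iterate the congruence reduction once to absorb the residual divisibility of $f(\x)$ beyond the modulus $\ell$. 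Balancing these contributions, which trades off the Minkowski exponent $j/3$ against the congruence saving governed by $1/d$, is precisely what degrades the exponent to $1/(3d)$ and accounts for the shape of the hypothesis $k^{4j/3}\le H$.
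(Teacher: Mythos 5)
Your outline assembles the right ingredients (reduction to residue classes mod $\ell=k^j$, the shift $\x=\bxi+\ell\y$ turning $Q=m$ into a ternary quadric for Theorem~\ref{thm:3}, a short auxiliary vector feeding Lemma~\ref{lem:rho''}), and you correctly flag the summation over the auxiliary data as the main obstacle — but the obstacle is not resolved, and the sketched remedy cannot work as stated. The quantitative problem is this: applying Theorem~\ref{thm:3} to the full box $|\y|\ll H/\ell$ gives only $O_\ve((H/\ell)^{1+\ve})$ points per class, and since $\sum_{\c_0}\rho(\ell;\c_0)\ge\rho(\ell)\asymp\ell$ (every class $\bxi$ admits at least one short $\c_0$ by Minkowski), even a perfect bookkeeping of which $\x$ is attached to which $\c_0$ recovers at best $\rho(\ell)\cdot(H/\ell)^{1+\ve}\approx H^{1+\ve}$, i.e.\ no power saving in $k$ at all — and this failure already occurs in your ``balanced'' case. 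The missing idea is that the Taylor relation \eqref{m:reich} forces the displacement $\y$ into the lattice $\sfl_{\bxi}=\{\y:\y.\nabla Q(\bxi)\equiv 0\bmod\ell\}$ of determinant $\asymp\ell$; passing to a minimal basis $\m_1,\m_2,\m_3$ and applying Theorem~\ref{thm:3} in those coordinates yields the sharper bound $O_\ve\bigl((H/(|\m_1|\ell))^{1+\ve}\bigr)$ per class. That extra factor $1/|\m_1|$ is what the whole proof runs on.

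With it, the correct dichotomy is on the size of $\m_1=\m_1(\bxi)$ relative to the threshold $\ell^{\kappa}$ with $\kappa=1/(3d)$ — far below the Minkowski bound $\ell^{1/3}$ you work with. When $|\m_1|\ge\ell^{\kappa}$ one needs no linear congruence at all: Lemma~\ref{lem:rho'} gives $\rho(\ell)\ll_j\ell^{1+\ve}$ classes, and the per-class count $\ll(H/(|\m_1|\ell))^{1+\ve}$ already yields $\ll H^{1+\ve}\ell^{-\kappa}$. Only when $|\m_1|<\ell^{\kappa}$ does one sum over the short vectors $\m$, and then over the small range $|\m|<\ell^{\kappa}$ only: the number of classes attached to a given $\m$ is $\rho(\ell;2\B\m)\ll_j\ell^{1-1/d}\gcd(\ell,2\B\m)^3$ by Lemma~\ref{lem:rho''}, and a dyadic summation of $\gcd(\ell,2\B\m)^3/|\m|$ over $|\m|<\ell^{\kappa}$ contributes $\ll\tau(\ell)\ell^{2\kappa}$, giving $\ll\ell^{2\kappa-1/d}H^{1+\ve}=\ell^{-\kappa}H^{1+\ve}$. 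Your version, which sums over all primitive $\c_0$ up to the Minkowski bound $k^{j/3}$ and lacks the $1/|\m_1|$ gain, has no analogous mechanism, and the ``balanced versus degenerate $\Lambda_\x$'' split does not supply one. (Your preliminary observations — that $\gcd(k^j,\x)=O(1)$, that the shifted polynomial is absolutely irreducible with rank-$3$ quadratic part, and that the constants from Lemma~\ref{lem:rho''} contribute only $\ell^{\ve}$ — are all fine.)
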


Let us see how this is sufficient to  prove \eqref{eq:magic} when $n=3$. We 
make the assumption that 
$j$ and $r$ are chosen so that 
\begin{equation}\label{eq:priory}
\frac{4jd}{3r}\leq1.
\end{equation}
Then, as before,  it will follow that $k^{4j/3}\ll H$
in \eqref{eq:upper-N}, since
$k\ll H^{d/r}$. 
Hence we may apply Proposition \ref{lem:N(X)''} in 
\eqref{eq:upper-N},
giving 
$$
N^{(2)}(H)
\ll_{\ve,j} H^{1+\ve}\sum_{k> H^\Delta} 
\frac{|\mu(k)|}{
k^{j/(3d)}},
$$
for any $\ve>0$ and any 
$j$ such that \eqref{eq:priory} holds.
Here the exponent of $k$  exceeds $1$ if and only if 
$j>3d$.
We choose $j=3d+1$, with which choice we conclude that \eqref{eq:magic} holds
with $n=3$ and any $\eta<\Delta/(3d)$,
provided that $r$ satisfies the inequality
$
r\geq \tfrac{4}{3}d(3d+1),
$
which was one of the hypotheses of 
 Theorem \ref{thm:2}.

\section{Large moduli}\label{s:largesse}

\subsection{Linear polynomials}\label{s:large1}

In this section we establish 
Proposition \ref{lem:N(X)}.
Let $Y\subset \AA^n$ denote  the quadric 
$Q=m$, for  $n\geq 3$, and let $f\in \ZZ[X_1,\dots,X_n]$ be a linear form.
We are interested in the quantity 
$$
U_\ell(H)=\card \{\x\in Y(\ZZ): |\x|\leq H, ~ 0\neq f(\x)\equiv 0\bmod{\ell}
\},
$$
for  $\ell\in \NN$.  After a non-singular linear change of variables, we see that it suffices to prove Proposition \ref{lem:N(X)}
when  $f= X_n$.  In particular $U_\ell(H)=0$ unless $\ell\ll H$, which we henceforth assume.

Suppose first that $n\geq 4$. 
Note that 
$$
U_\ell(H)= \sum_{\substack{h\in \ZZ, ~ 0<|h|\ll H\\ h\equiv
    0\bmod{\ell}}}L_{h}(H),
$$
where 
$
L_h(H)= 
\card \{\x\in Y(\ZZ): |\x|\leq H, ~ x_n=h
\},
$
for any $h\in \ZZ$.  For $n\geq 4$ we   claim that
\begin{equation}\label{lem:>3}
L_h(H)=O_{\ve}(H^{n-3+\ve}),
\end{equation}
for any $\ve>0$, where the implied constant does not depend on $h$.
This will suffice to establish Proposition \ref{lem:N(X)} for $n\geq 4$, since there are $O(\ell^{-1}H)$ integer values of  $h\ll H$ such that $\ell\mid h$.

To verify \eqref{lem:>3}  we write  
$$
q(X_1,\ldots,X_{n-1})=Q(X_1,\ldots,X_{n-1},h)-m,
$$ 
for the quadratic polynomial  in $L_h(H)$. 
Then 
$$
q_0(X_1,\ldots,X_{n-1})=Q(X_1,\ldots,X_{n-1},0)
$$ 
and it follows 
that $\rank(q_0)\geq n-2$, since $Q$ is non-singular. 
In particular $\rank(q_0)\geq 2$ if $n\geq 4$.
Moreover, if $q$ were reducible, then the quadratic form 
$$
R_h(X_0,\ldots,X_{n-1})= Q(X_1,\ldots,X_{n-1},hX_0)-mX_0^2
$$
would have rank at most $2$.  But this is impossible when $n\geq 4$. Indeed, if 
$h=0$, then  $\rank(R_0)\geq n-1\geq 3$. Equally, if  $h\neq 0$, then  $\rank(R_h)\geq n-1\geq 3$,
since $\rank(A+B)\geq |\rank(A)-\rank(B)|$ for any $n\times n$ matrices with integer coefficients. 
The estimate \eqref{lem:>3} is now a trivial consequence of Theorem \ref{thm:3}.

It remains to establish Proposition \ref{lem:N(X)} when $n=3$ and $f=X_3$.
We may write 
\begin{equation}\label{eq:pisa}
Q(X_1,X_2,X_3)=P(X_1,X_2)+X_3(cX_1+dX_2)+eX_3^2,
\end{equation}
where $P(X_1,X_2)=Q(X_1,X_2,0)$ and $c,d,e\in \ZZ.$
Since $Q$ is non-singular, it follows that $P$ has rank $1$ or $2$. We will need to deal with each of these cases separately. 

Suppose first that $\rank(P)=2$.
Then 
after a non-singular change of variables in $X_1$ and $X_2$ alone it suffices to proceed under the assumption that $c=d=0$ in \eqref{eq:pisa}.
Hence
$$
U_\ell(H)\leq 
\card\left\{
\x\in \ZZ^3: 
\begin{array}{l}
|\x|\ll H, ~ 
0\neq x_3 \equiv 0 \bmod{\ell}\\
P(x_1,x_2)
=m-e x_3^2
\end{array}
\right\}.
$$
Let $h\in \ZZ$ such that $h\ll H$ and  $0\neq h\equiv 0\bmod{\ell}$.
There are $O(\ell^{-1}H)$ such integers. 
If $m-e h^2\neq 0$ then an application of Lemma \ref{lem:affquad} 
shows that there are $O_\ve(H^\ve)$ choices for $x_1,x_2\ll H$ such that 
$P(x_1,x_2)=m-e h^2$.
If $m-e h^2=0$, which can happen for at most $2$ values of $h$, 
we deduce that $0\neq m\equiv 0\bmod{\ell}$.
Hence $\ell=O(1)$  and there are $O(H)$ choices for  $x_1,x_2\ll H$ such that 
$P( x_1, x_2)=0$. This case therefore contributes $O(\ell^{-1}H)$ overall, which shows that 
Proposition \ref{lem:N(X)} holds when $n=3$ 
and $\rank(P)=2$.

Next we suppose that $\rank(P)=1$ in \eqref{eq:pisa}, still with $n=3$ and $f=X_3$.  Then 
$Q$ takes the shape
$$
Q(X_1,X_2,X_3)=aL_1(X_1,X_2)^2+X_3L_2(X_1,X_2)+eX_3^2,
$$
for $a,e\in \ZZ$ and linear forms $L_1,L_2\in \ZZ[X_1,X_2]$. Moreover  $aL_1$ and $L_2$ are non-zero and non-proportional,  since $Q$ is non-singular. 
After a change of variables we obtain 
$$
U_\ell(H)\leq 
\card\left\{
\x\in \ZZ^3: 
\begin{array}{l}
|\x|\ll H, ~ 
0\neq x_3 \equiv 0 \bmod{\ell}\\
x_1^2+x_2x_3+cx_3^2=d
\end{array}
\right\},
$$
for suitable integers $c,d=O(1)$ with $d\neq 0$.  
If $x_3=0$ then $\ell=O(1)$
and there are clearly $O(H)$ choices for $x_1,x_2$. Hence there is a contribution of 
$O(\ell^{-1}H)$ to $U_\ell(H)$ from this case. Alternatively, the contribution from non-zero $x_3$
is at most
$$
\sum_{
\substack{0<|x_3|\ll H\\
x_3\equiv 0 \bmod{\ell}}}
\card\{x_1\ll H:
x_1^2\equiv d\bmod{x_3}
\}
\ll H \sum_{
\substack{0<|x_3|\ll H\\
x_3\equiv 0 \bmod{\ell}}}
\frac{\nu(x_3;d)}{|x_3|},
$$
where 
$$
\nu(q;d)=\#\{n\bmod{q}: n^2\equiv d \bmod{q}\}.
$$
On noting that $\nu(q;d)$ is a multiplicative function of $q$, with 
$\nu(2^j;d)\leq 4$ and 
$\nu(q;d) =\sum_{k\mid q} |\mu(k)| (\frac{d}{q})$ when $q$ is odd, it  follows that
$
\nu(q;d)\leq 2^{\omega(q)+1}\ll_\ve q^{\ve/2},
$
where $\omega(q)$ denotes the number of distinct prime factors of $q$.
Next we observe that 
$$
\sum_{
\substack{0<|x_3|\ll H\\
x_3\equiv 0 \bmod{\ell}}}
\frac{1}{|x_3|}\ll \ell^{-1}\log H.
$$
Thus the non-zero $x_3$ contribute 
$O_\ve(\ell^{-1}H^{1+\ve})$ overall. The same bound therefore holds for $U_\ell(H)$ when $n=3$
and $\rank(P)=1$, which thereby concludes the proof of Proposition \ref{lem:N(X)}.

\subsection{Higher degree polynomials}\label{s:large2}

We now place ourselves in the setting of Propositions \ref{lem:N(X)''-4} 
and \ref{lem:N(X)''}. 
Let   $f\in \ZZ[X_1,\ldots,X_n]$ be a  non-singular form of degree $d\geq 2$.
If $\Delta_f$ is the discriminant of $f$, then $\Delta_f$ and $m\det(Q)$ are both non-zero integers.  
Let $\ve>0$ be given once and for all.
For square-free  $k\in \NN$ such that $k^{2j}\leq H$, 
we  want to  estimate the quantity $U_{k^j}(H)$ in 
\eqref{eq:N(X)}.

Let us put $\ell=k^j$ for convenience. 
Our estimation of $U_{\ell}(H)$
is inspired by an argument of Browning and   Munshi \cite[Lemma~4]{BM}.
We will require some elementary facts about integer sublattices, as established by Davenport 
 \cite[Lemma 5]{dav}.
Suppose that  $\sfl\subset \ZZ^n$ is a lattice of rank $r$ and determinant 
$\det(\sfl)$.
Then there exists a ``minimal'' basis
$\ma{m}^{(1)},\ldots,\ma{m}^{(r)}$  of $\sfl$ 
with the property
$$
\lambda_{j}\ll  |\u|/|\ma{m}^{(j)}|, \quad (1\leq j\leq r),
$$
whenever 
$\u\in\sfl$ is written as 
$
\u=\sum_{j=1}^r\lambda_{j}\ma{m}^{(j)}.
$
Furthermore,  the basis is constructed  in such a way that
$1\leq |\ma{m}^{(1)}|\leq \cdots \leq |\ma{m}^{(r)}|$ and 
\begin{equation}\label{eq:train}
\det(\sfl)\leq \prod_{j=1}^{r}|\ma{m}^{(j)}|\ll \det(\sfl).
\end{equation}

Breaking into residue classes modulo $\ell$, we obtain
\begin{equation}\label{m:6}
U_\ell(H)\leq 
\hspace{-0.3cm}
\sum_{\substack{\bxi\bmod{\ell}\\  Q(\bxi)\equiv m \bmod{\ell} \\
 f(\bxi)\equiv 0 \bmod{\ell}
}}
\hspace{-0.3cm}
\#\{
 \x\in Y(\ZZ):  |\x|\leq H, ~\x\equiv \bxi \bmod{\ell}
\}.
\end{equation}
We  denote the set whose cardinality appears in the inner sum by $S_\ell(H;\bxi)$.
If $S_\ell(H;\bxi)$ is empty then there is nothing to prove. Alternatively, suppose we are given $\x_0\in S_\ell(H;\bxi)$. Then any other vector in the set must be congruent to $\x_0$ modulo $\ell$. 
Making the change of variables 
$\x=\x_0+\ell \y$ in $S_\ell(H;\bxi)$, we have  $|\y|<2\ell^{-1}H$.  
Furthermore, 
\begin{equation}\label{m:reich}
\y.\nabla Q(\x_0) +\ell Q(\y)=0,
\end{equation}
by Taylor's formula, 
since $Q(\x_0+\ell \y)=m$ and $Q(\x_0)=m$.
Note here that $\nabla Q(\x_0)\neq \mathbf{0}$ for any $\x_0\in Y(\ZZ)$. 

When $n\geq 4$, it will be convenient to 
deal separately with the contribution from $\y$ for which $\y.\nabla Q(\x_0)=0$. 
Using this linear equation to eliminate one of the variables, we arrive at a quadratic form in $n-1$ variables. We claim that this quadratic form is non-singular. When $n\geq 4$ this automatically implies that it is also absolutely irreducible.
To see the claim, suppose that $\mathbf{B}$ is the underlying symmetric  matrix associated to $Q$, so that 
$\nabla Q(\y)=2\mathbf{By}$ and 
$\nabla Q(\x_0)=2\mathbf{B}\x_0$. Then if the quadric obtained from $\mathbf{Y}.\nabla Q(\x_0)=Q(\mathbf{Y})=0$ is singular, there must exist $\lambda,\mu\in \overline\QQ$ and $\y\neq \mathbf{0}$, such that  $(\lambda,\mu)\neq (0,0)$ and 
$$
Q(\y)=0, \quad 2\mathbf{B}(\lambda \y+\mu \x_0)=\mathbf{0}.
$$ 
Since $\mathbf{B}$ is non-singular, this implies that 
$\lambda \y+\mu \x_0=\mathbf{0}$, which is impossible since $0\neq m=Q(\x_0)$.
Hence the claim follows and Theorem~\ref{thm:3} implies that for $n\geq 4$
the overall contribution to $S_\ell(H;\bxi)$  from $\y$ for which $\y.\nabla Q(\x_0)=0$ is 
$\ll_\ve (H/\ell)^{n-3+\ve}$. 

Returning to general $n\geq 3$,  \eqref{m:reich} implies that the $\y$ under consideration satisfy 
the congruence 
$
\y.\nabla Q(\bxi)\equiv 0\bmod{\ell},
$
since 
$\x_0\equiv \bxi \bmod{\ell}$.
Let us write
 $$\sfl_{\bxi}=\{\y\in \ZZ^n:  \y.\nabla Q(\bxi)\equiv 0 \bmod{\ell}\}.
 $$
Our work so far has shown that 
$$
\#S_\ell(H;\bxi)\leq 2\#\{
 \y\in\sfl_{\bxi} :  |\y|<2\ell^{-1}H , ~\text{\eqref{m:reich} holds}\},
$$
when $n=3$, and 
$$
\#S_\ell(H;\bxi)\ll_\ve \left(\frac{H}{\ell}\right)^{n-3+\ve} +\#\left\{
 \y\in\sfl_{\bxi} :  
 \begin{array}{l}
 |\y|<2\ell^{-1}H , ~\text{\eqref{m:reich} holds} \\
 \y.\nabla Q(\x_0)\neq 0
 \end{array}
 \right\},
$$
when $n\geq4$.

The set $\sfl_{\bxi}$ defines an integer lattice of rank $n$.
To calculate its determinant we
write $\hat \ell=\ell/\gcd(\ell,\nabla Q(\bxi))$ and note that $\hat\ell\ZZ^n\subset \sfl_{\bxi}$.
Thus we have 
$$
\det(\sfl_{\bxi})=[\ZZ^n:\sfl_{\bxi}]=\frac{[\ZZ^n: \hat \ell\ZZ^n]}{[\sfl_{\bxi}: \hat \ell \ZZ^n]}.
$$
The numerator here is clearly $\hat \ell^n$ and the denominator is seen to be $\hat\ell^{n-1}$, since there are 
$\hat\ell^{n-1}$ distinct values of $\y \bmod{\hat\ell}$ for which $\y.\nabla Q(\bxi)\equiv 0\bmod{\ell}$.
Thus $\det(\sfl_{\bxi})=\hat\ell$.
We claim that in fact
$
\ell \ll \det( \sfl_{\bxi}) \leq \ell,
$
the upper bound being  trivial. For the lower  bound, note that $\bxi.\nabla Q(\bxi)= 2Q(\bxi)\equiv 2m\bmod{\ell}$
in \eqref{m:6}, 
whence
$ \gcd(\ell,\nabla Q(\bxi))\ll 1$.

Let $\M$ denote the non-singular matrix formed from taking column vectors to be a minimal basis  $\m_1,\ldots,\m_n$ for $\sfl_{\bxi}$. 
Making the change of variables $\y=\M\bla$, 
we arrive at the  equation $q(\bla)=0$, where 
if
$b_i=\ell^{-1}\m_i.\nabla Q(\x_0)$ for $i=1,\dots,n$, then 
\begin{equation}\label{eq:q}
q(\bla)=Q(\lambda_1 \m_1+\dots+\lambda_n \m_n)+b_1\lambda_1+\dots+
b_n\lambda_n.
\end{equation}
This is 
obtained from 
\eqref{m:reich} by substitution and dividing through by $\ell$. It is defined over $\ZZ$ and 
the quadratic homogeneous part $q_0$ has  underlying matrix $\M^T \B \M$
of full rank  $n\geq 3$,
where  $\B$ is the matrix
associated to  $Q$. 
Our argument now diverges according to whether $n\geq 4$ or $n=3.$

\subsubsection*{The case $n\geq 4$}

Recalling the definition of $S_\ell(H;\bxi)$, our main aim in this section is to establish the
following result, which may prove to be of independent interest. 

\begin{prop}\label{prop:7.1}
Assume that $n\geq 4$ and let $\ve>0$. Then we have 
$$
\#\{\x\in  Y(\ZZ):  |\x|\leq H, ~\x\equiv \bxi \bmod{\ell}\}\ll_{\ve} \left(\frac{H}{\ell}\right)^{n-3+\ve}\left(1+\frac{H}{\ell^{n/(n-1)}}\right),
$$
for any $\ell\in \NN$ and any $\bxi\in (\ZZ/\ell\ZZ)^n$, where the implied constant is uniform in $\ell$ and $\bxi$.
\end{prop}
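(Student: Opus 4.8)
The plan is to bound, in the notation set up above in \S\ref{s:large2}, the quantity
\[
T:=\card\bigl\{\bla\in\ZZ^n:\ q(\bla)=0,\ |\lambda_j|\le B_j\ (1\le j\le n),\ q_0(\bla)\ne 0\bigr\},
\]
where $B_j\asymp (H/\ell)|\m_j|^{-1}$, the $\m_1,\dots,\m_n$ form a minimal basis of $\sfl_\bxi$ with $|\m_1|\le\cdots\le|\m_n|$ and $\prod_{j}|\m_j|\asymp\ell$, and $q\in\ZZ[\lambda_1,\dots,\lambda_n]$ is absolutely irreducible (since $\rank(q_0)=n\ge 4>2$). The condition $q_0(\bla)\ne 0$ is precisely the condition $\y\cdot\nabla Q(\x_0)\ne 0$ already isolated, because $q(\bla)=q_0(\bla)+\sum_j b_j\lambda_j$; retaining it is essential, since it is what will allow us to discard the degenerate loci of the quadric below. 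Together with the contribution $\ll_\ve(H/\ell)^{n-3+\ve}$ of the $\y$ with $\y\cdot\nabla Q(\x_0)=0$, it therefore suffices to prove $T\ll_\ve(H/\ell)^{n-3+\ve}\bigl(1+(H/\ell)\ell^{-1/(n-1)}\bigr)$.

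First I would reduce to the case in which every side of the box has length at least $1$. Let $s$ be the number of indices $j$ with $B_j\ge 1$, so that $B_1\ge\cdots\ge B_s\ge 1>B_{s+1}\ge\cdots\ge B_n$; then $\lambda_{s+1}=\cdots=\lambda_n=0$ is forced, and $T$ counts $(\lambda_1,\dots,\lambda_s)\in\ZZ^s$ lying on the quadratic polynomial $q''(\lambda_1,\dots,\lambda_s):=q(\lambda_1,\dots,\lambda_s,0,\dots,0)$, subject to $|\lambda_j|\le B_j$ and to $q_0''(\lambda_1,\dots,\lambda_s)=q_0(\lambda_1,\dots,\lambda_s,0,\dots,0)\ne 0$; in particular, unless $T=0$, the form $q_0''$ is not identically zero. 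When $s\le 2$ an application of Lemma~\ref{lem:affquad}, together with its trivial analogues for reducible conics and for lines, gives $T\ll_\ve(H/\ell)^\ve$, which is acceptable for $n\ge 4$. So assume $s\ge 3$.

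The heart of the matter is this case $s\ge 3$, where I would run the slicing argument of Browning--Heath-Brown--Salberger \cite[\S\S4--5]{pila} underlying Theorem~\ref{thm:3}, but inside the anisotropic box $\prod_{j=1}^{s}[-B_j,B_j]$: at each stage one eliminates the variable whose range $B_j$ is currently smallest, descending after $s-2$ steps to a conic handled by Lemma~\ref{lem:affquad}. Because the box geometry dictates which variable is removed, the slices encountered may be reducible or may have quadratic part of rank $\le 1$; these degenerate slices are treated exactly as in \cite{pila}, and the inequality $q_0(\bla)\ne 0$ (together with its images under the successive specialisations) is used to discard the loci on which the quadratic part collapses, so that their total contribution is of strictly smaller order. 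Carrying this out --- and, crucially, keeping the implied constant dependent only on $n$ and $\ve$, which is exactly where the uniformity in Theorem~\ref{thm:3} is indispensable, since the coefficients of $q$ vary with $\bxi$ --- yields
\[
T\ll_\ve (H/\ell)^\ve\prod_{j=3}^{s}B_j .
\]
I expect this step to be the main obstacle.

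Finally I would convert this into the stated estimate using only $\prod_{j}|\m_j|\asymp\ell$ and $|\m_j|\ge 1$. If $s\le n-1$, then since $B_j\asymp(H/\ell)|\m_j|^{-1}\le H/\ell$ we get $\prod_{j=3}^{s}B_j\ll(H/\ell)^{s-2}\le(H/\ell)^{n-3}$. If $s=n$, then $\prod_{j=3}^{n}B_j\asymp(H/\ell)^{n-2}|\m_1||\m_2|\,\ell^{-1}$, while from $|\m_2|^{\,n-1}\le|\m_2||\m_3|\cdots|\m_n|\asymp\ell/|\m_1|\le\ell$ we obtain $|\m_1||\m_2|\le|\m_2|^{2}\ll\ell^{2/(n-1)}$, whence $\prod_{j=3}^{n}B_j\ll(H/\ell)^{n-2}\ell^{-(n-3)/(n-1)}\le(H/\ell)^{n-2}\ell^{-1/(n-1)}$, the last step because $n\ge 4$. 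In either case $T\ll_\ve(H/\ell)^{n-3+\ve}\bigl(1+(H/\ell)\ell^{-1/(n-1)}\bigr)$, and adding the earlier $(H/\ell)^{n-3+\ve}$ completes the proof of Proposition~\ref{prop:7.1}.
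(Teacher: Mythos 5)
Your setup (passing to the lattice $\sfl_\bxi$, the observation that $q_0(\bla)\neq 0$ is equivalent to $\y\cdot\nabla Q(\x_0)\neq 0$ via \eqref{m:reich}, the reduction to the sub-box where every side exceeds $1$, and the final conversion using $|\m_1||\m_2|\le |\m_2|^2\ll \ell^{2/(n-1)}$) is sound, and if your intermediate bound $T\ll_\ve (H/\ell)^{\ve}\prod_{j=3}^{s}B_j$ were available it would indeed give a conclusion at least as strong as the Proposition. But that bound is precisely the gap, and it is not a routine adaptation of the argument behind Theorem \ref{thm:3}. In the proof of Theorem \ref{thm:3} one is free to \emph{choose} a generic slicing direction $\a$ with $|\a|=O_\nu(1)$ so that every slice is absolutely irreducible with quadratic part of rank $\ge 2$; in your anisotropic descent the slicing direction is dictated by the box geometry, so after the first step you may be forced to slice in a direction for which infinitely many slices are reducible or have quadratic part of rank $\le 1$ (already for $n=4$, $s=4$: after fixing $\lambda_4$ the quadratic part in $(\lambda_1,\lambda_2,\lambda_3)$ is only guaranteed rank $2\cdot 3-4=2$, and a further specialisation can kill it). A reducible slice contains a line, which can carry on the order of $\prod_{j=2}^{s-1}B_j$ integer points of the box; this exceeds your target $\prod_{j=3}^{s}B_j$ by the factor $B_2/B_s\ge 1$, so these degenerate slices are \emph{not} of smaller order, and the constraint $q_0(\bla)\ne 0$ does not remove them (points on a linear component of a slice need not satisfy $q_0=0$). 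This is exactly the obstruction the authors describe in \S\ref{s:large2} when explaining why, for $n=3$, they could not argue ``by first fixing $\lambda_3$ and analysing the resulting binary quadratic polynomial.''

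The paper sidesteps the issue by proving something weaker than your $T\ll_\ve (H/\ell)^\ve\prod_{j=3}^{s}B_j$, but still sufficient. It performs a \emph{single} slicing step, in the $\lambda_n$ direction only, after verifying directly that every slice $r=q(X_1,\dots,X_{n-1},h)$ is absolutely irreducible with $\rank(r_0)\ge 2$ (here $r_0(X)=Q(X_1\m_1+\cdots+X_{n-1}\m_{n-1})$ has rank $\ge n-2\ge 2$ because $Q$ is non-singular and the $\m_j$ are independent, and absolute irreducibility is checked via the rank of the homogenisation, using the condition $\sum_j b_j\lambda_j\neq 0$ to handle $h=0$). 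Each slice is then counted by Theorem \ref{thm:3} applied to the \emph{cube} of side $B_1=H/(|\m_1|\ell)$, giving $\ll_\ve B_1^{n-3+\ve}$ uniformly in $h$, and summing over the $\ll B_n=H/(|\m_n|\ell)$ values of $h$ yields
$$
\card S_\ell(H;\bxi)\ll_\ve \left(\frac{H}{\ell}\right)^{n-3+\ve}+\frac{H^{n-2+\ve}}{\ell^{n-2+\ve}\,|\m_1|^{n-3}|\m_n|},
$$
after which the elementary inequality $|\m_1|^{n-3}|\m_n|\ge |\m_1|^{1/(n-1)}|\m_n|\ge (|\m_1|\cdots|\m_n|)^{1/(n-1)}\gg \ell^{1/(n-1)}$ (valid since $n\ge 4$) gives the stated bound. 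If you want to keep your route, you would need to state and prove the anisotropic counting lemma as a separate result, with an argument that genuinely controls the degenerate slices arising at each stage of the descent; as written, the appeal to \cite{pila} does not supply it.
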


Before proving this result let us see how it suffices to 
complete the proof of Proposition \ref{lem:N(X)''-4}.
Taking $\ell=k^j$ for square-free $k\in \NN$,
we have  $\ell^{n/(n-1)}\leq H$, by hypothesis.  Thus the second term  dominates the first term in Proposition~\ref{prop:7.1}. 
Substituting this into \eqref{m:6} and applying 
Lemma \ref{lem:rho'},
we finally arrive at the bound
$$
U_\ell(H) \ll_\ve  
\frac{\rho(\ell)H^{n-2+\ve}}{\ell^{n-2+1/(n-1)+\ve}}
\leq
\frac{C_j^{\omega(\ell)}H^{n-2+\ve}}{\ell^{1/(n-1)+\ve}},
$$
for an appropriate constant $C_j>0$ depending on $j$. Taking 
$C_j^{\omega(\ell)}=O_{\ve,j}(\ell^{\ve})$, 
we therefore conclude the proof of Proposition \ref{lem:N(X)''-4}.

\begin{proof}[Proof of Proposition \ref{prop:7.1}]
We employ the  properties of the minimal basis that were recorded above. This leads to  the inequality
$$
\#S_\ell(H;\bxi)\ll_\ve \left(\frac{H}{\ell}\right)^{n-3+\ve} +\#\left\{
 \bla\in\ZZ^n: 
 \begin{array}{l}
\text{$\lambda_i\ll 
(|\m_i| \ell)^{-1}H$ for $1\leq i\leq n$} \\q(\bla)=0,~
b_1\lambda_1+\dots+ b_n\lambda_n\neq 0
 \end{array}
 \right\}.
$$
We begin by considering $q(\bla)$ when  $\lambda_n=h$ is a fixed integer, with  $h\neq 0$ when $b_1=\dots=b_{n-1}=0$.
Put 
$$
r(X_1,\dots,X_{n-1})=q(X_1,\dots,X_{n-1},h).
$$ 
 We claim that $r$ is absolutely irreducible and that $\rank(r_0)\geq 2$.
The latter follows on noting that 
$r_0(X_1,\dots,X_{n-1})=Q(X_1\m_1+\dots+X_{n-1}\m_{n-1})$, which must have rank at least $n-2\geq 2$. To check  that $r$ is absolutely irreducible we consider
 the rank of the  quadratic form
$
X_0^2r(X_1/X_0,\dots,X_{n-1}/X_0)$. The latter  is equal to
$$
Q(X_1 \m_1+\dots+X_{n-1} \m_{n-1}+hX_0\m_n)+X_0\left(b_1X_1+\dots+
b_{n-1}X_{n-1}+b_nhX_0\right).
$$
If $h=0$ then this has rank at least  $ n-2+1=n-1\geq 3$ since  in this scenario $(b_1,\dots,b_{n-1})\neq (0,\dots,0)$. If $h\neq 0$ then it clearly has rank at least $n-1\geq 3$.
Hence $r$ is indeed absolutely  irreducible.

Returning to our estimation of
$\#S_\ell(H;\bxi)$, suppose first that 
$|\m_n| \ell\gg H$, so that $\lambda_n=0$ in any solution to be counted. 
It follows from the  condition $b_1\lambda_1+\dots+ b_n\lambda_n\neq 0$ that we may assume 
$(b_1,\dots,b_{n-1})\neq (0,\dots,0)$. 
Then $q(X_1,\dots,X_{n-1},0)$ satisfies the hypotheses of Theorem \ref{thm:3} and we see that there is an overall contribution of $O_\ve((H/\ell)^{n-3+\ve})$ from this case, which is satisfactory. 

We proceed under the assumption that 
$|\m_i| \ell\ll H$ for $1\leq i\leq n$.
We will  fix a value of $\lambda_n$ and then use Theorem \ref{thm:3} to estimate the associated number of $\lambda_1,\dots,\lambda_{n-1}$.
It follows from the  condition $b_1\lambda_1+\dots+ b_n\lambda_n\neq 0$ that when $b_1=\dots=b_{n-1}=0$, any solution with $\lambda_n=0$ is to be ignored. 
Let $\lambda_n=h$ be fixed and put $r(X_1,\dots,X_{n-1})=q(X_1,\dots,X_{n-1},h)$, as before.  Then $r$ satisfies the hypotheses of  Theorem \ref{thm:3}  and we  deduce that the total number of 
$\lambda_1,\dots,\lambda_{n-1}$ associated to $h$ is
$$
\ll_\ve \left(\frac{H}{|\m_1|\ell}\right)^{n-3+\ve}.
$$
The implied constant in this estimate depends at most on $\ve$ and $n$ and, crucially,  is independent of $h$.
Summing over $h$ therefore leads to the overall conclusion that 
$$
\#S_\ell(H;\bxi)\ll_\ve \left(\frac{H }{\ell}\right)^{n-3+\ve}+
\frac{H^{n-2+\ve}}{\ell^{n-2+\ve} |\m_1|^{n-3}|\m_n|}.
$$
It follows from  \eqref{eq:train} that 
\begin{align*}
|\m_1|^{n-3}|\m_n|\geq 
|\m_1|^{1/(n-1)}|\m_n|\geq 
 (|\m_1|\dots|\m_n|)^{1/(n-1)}
 &\gg (\det \sfl_{\bxi})^{1/(n-1)}\\
&\gg \ell^{1/(n-1)}.
\end{align*}
Taking this lower bound in our estimate for 
$\#S_\ell(H;\bxi)$ concludes the proof.
\end{proof}

\subsubsection*{The case $n=3$}

We henceforth take $n=3$ and concern ourselves with the  proof of Proposition  \ref{lem:N(X)''}.
For $j\geq2$ 
we are interested in estimating the quantity
$U_{k^j}(H)$,
when $k^{4j/3}\leq H$.
Developing 
$U_{k^j}(H)$ as in \eqref{m:6}, our 
starting point is the inequality
$$
\#S_\ell(H;\bxi)\leq 2\#\{
 \bla\in\ZZ^3 :  \text{$\lambda_i\ll 
(|\m_i| \ell)^{-1}H$ for $1\leq i\leq 3$} ,~ q(\bla)=0\},
$$
where $q$ is given by $\eqref{eq:q}$ and $\ell=k^j$.
When $n=3$ we have been unable to 
produce a satisfactory estimate for this quantity by first fixing $\lambda_3$ and analysing the resulting binary quadratic polynomial.  The problem is that for certain choices of $\lambda_3$ it may happen that the resulting polynomial is reducible over $\QQ$, which would then contribute too much to $\#S_\ell(H;\bxi)$.   Instead we will apply Theorem \ref{thm:3} directly to the ternary quadratic polynomial and attempt to show that $|\m_1|$ cannot often be very small.

Since $q_0$ has rank $3$ it follows that 
$q$ is absolutely irreducible. 
By \eqref{eq:train} one has $|\m_1|^3\ll \det (\sfl_{\bxi})\leq \ell$. Moreover, 
$\ell^{4/3}=k^{4j/3}\leq H$.
Hence $H\gg |\m_1|\ell$ and so
  Theorem~\ref{thm:3}
implies that 
\begin{equation}\label{eq:now}
\#S_\ell(H;\bxi)\ll_\ve \left(\frac{H}{|\m_1| \ell}\right)^{1+\ve}.
\end{equation}
This estimate is sharpest when 
$|\m_1|^3$ has exact order $\ell$. 
Let us put
$$
\kappa=\frac{1}{3d},
$$
for convenience.
Returning to \eqref{m:6} we will denote by $U_\ell^{(I)}(H)$ the overall contribution  to the right hand side from $\bxi$ for which the corresponding basis vector $\m_1=\m_1(\bxi)$ satisfies 
$|\m_1|\geq \ell^\kappa$.  
We denote by $U_\ell^{(II)}(H)$ the  contribution  from  $\bxi$ for which the smallest basis vector satisfies
$|\m_1|< \ell^\kappa$.  
The estimation of $U_\ell^{(I)}(H)$  is a straightforward consequence of 
\eqref{m:6}, \eqref{eq:now} and Lemma \ref{lem:rho'}. 
Thus  we deduce that 
$$U_\ell^{(I)}(H)\ll_\ve \frac{\rho(\ell)H^{1+{\ve}}}{\ell^{(1+\kappa)(1+\ve)}}
\leq \frac{C_j^{\omega(\ell)}H^{1+{\ve}}}{\ell^{\kappa+\ve}}
\ll_{\ve,j} \frac{H^{1+{\ve}}}{\ell^{\kappa}},
$$
for an appropriate constant $C_j>0$ depending on $j$. 
In view of our choice of $\kappa$, 
this is clearly satisfactory for 
Proposition  \ref{lem:N(X)''}.

In order to estimate 
$U_\ell^{(II)}(H)$, 
we introduce 
averaging 
over the least non-zero elements of $\sfl_{\bxi}$.
Thus, given a non-zero vector $\m\in \ZZ^3$ satisfying  $|\m|< \ell^{\kappa}$, 
we shall return to \eqref{m:6} and consider the overall contribution from 
$\bxi$ and $\x$ such that  $\m.\nabla Q(\bxi)\equiv 0\bmod{\ell}$.
Note that $\m.\nabla Q(\bxi)=(2\B\m).\bxi$, where $\B$ is the symmetric matrix associated to $Q$.
Applying \eqref{eq:now}, we obtain 
$$
U_\ell^{(II)}
(H)\ll_\ve
\sum_{
\substack{
\m\in \ZZ^3\\ 0<|\m|<\ell^{\kappa}
}} 
\rho(\ell;2\B\m)
\left(\frac{H}{|\m| \ell}\right)^{1+\ve}.
$$
Invoking  Lemma \ref{lem:rho''}, we deduce the existence of a constant $C_j>0$ such that 
\begin{align*}
\rho(\ell;2\B\m)
  \leq C_j^{\omega(\ell)} \ell^{1-1/d}
\gcd( \ell,2\B\m)^3.
\end{align*}
Thus 
\begin{align*}
U_\ell^{(II)}
(H)
 & \ll_\ve \frac{C_j^{\omega(\ell)} H^{1+\ve}}{\ell^{1/d+\ve}}
\sum_{
\substack{
\m\in \ZZ^3\\ 0<|\m|<\ell^{\kappa}
}} 
\frac{\gcd( \ell,2\B\m)^3}{|\m|}.
\end{align*}
The contribution to the inner sum from $\m$ such that $M<|\m|\leq 2M$ is 
at most
\begin{align*}
\sum_{\substack{h\mid \ell}}
\frac{h^3}{M}\#\{
\m\in \ZZ^3:
M<|\m|\leq 2M, ~
h\mid 2\B\m\}
&\ll 
\sum_{\substack{h\mid \ell}}
\frac{h^3}{M}
\left(\frac{M}{h}\right)^3\\
&\ll \tau(\ell)M^2.
\end{align*}
Summing over  dyadic intervals for $M<\ell^\kappa$, we now find that 
\begin{align*}
U_\ell^{(II)}(H)
 & \ll_\ve \frac{\tau(\ell)C_j^{\omega(\ell)} H^{1+\ve}}{\ell^{1/d+\ve}}
\sum_{M<\ell^\kappa} 
M^2\ll_{\ve,j}  \ell^{2\kappa-1/d}H^{1+\ve}.
\end{align*}
Recalling 
 our choice of $\kappa$, 
this is also  satisfactory for 
Proposition  \ref{lem:N(X)''} and so completes its proof.

\end{document}